\newcommand{\R}{{\mathbb R}}
\newcommand{\Rn}{{\mathbb R}^n}
\newcommand{\re}{\mathbb{R}}
\newcommand{\La}{\Delta}
\newcommand\norm[1]{\Arrowvert {#1}\Arrowvert}
\newcommand{\RN}[1]{%
  \textup{\uppercase\expandafter{\romannumeral#1}}
}
\newtheorem{theorem}{Theorem}[section]
\newtheorem{cor}[theorem]{Corollary}
\newtheorem{definition}{Definition}[section]
\newtheorem{lem}[theorem]{Lemma}
\newtheorem{prop}[theorem]{Proposition}
\newtheorem{remark}[theorem]{Remark}
\title [Double Obstacle Problems]{The Regularity Theory for the Double Obstacle Problem}
\author{Ki-ahm Lee}
\address{Department of Mathematical Sciences,
Seoul National University, Seoul 08826, Korea.
Center for Mathematical Challenges,
Korea Institute for Advanced Study, Seoul 02455, Korea}
\email{kiahm@snu.ac.kr}
\author{Jinwan Park}
\address{Department of Mathematical Sciences,
Seoul National University, Seoul 08826, Korea.}
\email{jinwann@snu.ac.kr}
\author[Henrik Shahgholian ]{Henrik Shahgholian}
\address{Department of Mathematics, KTH Royal Institute of Technology,
  100~44  Stockholm, Sweden}
\email{henriksh@kth.se}
\thanks{\textup{K. Lee has been supported by the National Research Foundation of Korea (NRF) grant funded by the Korea government (MSIP) (No.2015R1A4A1041675).
K. Lee also holds a joint appointment with the Research Institute of Mathematics of Seoul National University.
J. Park has been supported by National Research Foundation of Korea (NRF) grant funded by the Korean government (Global Ph.D. Fellowship). 
H. Shahgholian has been supported in part by Swedish Research Council. 
}}
\begin{document}
\maketitle 
\begin{abstract}
In this paper, we prove local $C^{1}$ regularity of free boundaries 
%$\Gamma(u)=\partial \Omega(u) \cap B_1$, $\Gamma^\psi(u)=\partial \{u>\psi\} \cap B_1$,
for the double obstacle problem with an upper obstacle $\psi$,
\begin{align*}
\Delta u &=f\chi_{\Omega(u) \cap\{ u< \psi\} }+ \Delta \psi \chi_{\Omega(u)\cap \{u=\psi\}}, \qquad u\le \psi \quad \text { in } B_1,
\end{align*} 
where $\Omega(u)=B_1 \setminus \left( \{u=0\} \cap \{ \nabla u =0\}\right)$ under a thickness assumption for $u$ and $\psi$.

\end{abstract}

%\tableofcontents  

\section{Introduction and Main Results}

\subsection{Background}

 In the last five decades, the classical (one-sided) obstacle problem has been subject of intense studies.  On the other hand, the corresponding two-sided counterpart of this problem (the double-obstacle problem)  has not attracted the same interest, and hence there are much less known results concerning this problem. A particular problem, of interest to us, is the regularity of the free boundary 
 for this problem, which has not been addressed in the literature. 
 Readers may consult  \cite{MR} for a review  on the problem and also a list over existing literature. For a recent regularity theory for  a particular case of  this problem we refer to work of G. Aleksanyan \cite{Ale}, where she considers the global homogeneous solutions to the double obstacle problem, with homogeneous obstacles. Another interesting paper on the topic is \cite{DMV}.
 
% Here we shall consider both linear and non-linear theory, where the assumptions will be slightly different.  For linear theory we shall actually consider a slightly more general problem which relaxes one of the obstacles, see \eqref{main eq} here below. 
Here we shall consider a double obscale problem which relaxes one of the obstacles, see \eqref{main eq} here below. Our result is very close to the well-known regularity theory of L. Caffarelli for the obstacle problem \cite{Caf}, and also the no-sign obstacle problem due to Caffarelli-Karp-Shahgholian \cite{CKS}. 
%For fully nonlinear theory the results are close to that of the two of the current  authors \cite{Lee}, \cite{LS}.

To set the scene for our study, we consider the double obstacle problem %for linear theory 
 with a function $$\psi \in C^{1,1}(B_1) \cap  C^{2,1}(\overline{\Omega(\psi)}),\quad \Omega(\psi)=B_1 \setminus \left( \{\psi=0\} \cap \{ \nabla \psi =0\}\right)$$ in a domain $B_1\subset \R^n$  ($n\geq 2$):\footnote{For this formulation we refer to \cite{FS15}. Also the solution is allowed to penetrate through the lower obstacle. This is usually referred to as no-sign obstacle problem.}
\begin{align}
\Delta u =f\chi_{\Omega(u) \cap\{ u< \psi\} } + \Delta \psi \chi_{\Omega(u)\cap \{u=\psi\}}, \qquad &u\le \psi \quad \text { in } B_1,\label{main eq}
\end{align}
with
\begin{align*}
\Omega(u)&=B_1 \setminus \left( \{u=0\} \cap \{ \nabla u =0\}\right),
\end{align*}
where $f\in C^{0,1}(B_1).$ The function $\psi$ is called \emph{ the upper obstacle}.

\subsection{Notation}
 We will use the following notations throughout the paper. 
$$\begin{array}{ll}
C, C_0, C_1       &\hbox{generic constants }\cr 
\chi_E            &\hbox{the characteristic function of the set } E, (E \subset \Rn)\cr 
\overline E       &\hbox{the closure of } E\cr 
\partial E        &\hbox{the  boundary of a set }  E \cr
|E|               &n-\hbox{dimensional Lebesgue measure of the set } E\cr 
B_r(x), B_r   \qquad    &\{y\in \Rn: |y - x|<r\}, \quad B_r(0) \cr
\Omega(u), \Omega(\psi) & \hbox{see Equation } \eqref{main eq}  \\
\Lambda(u), \Lambda(\psi) & B_1\setminus \Omega(u), B_1\setminus \Omega(\psi) \\
\Omega^\psi(u)  & B_1\setminus \left( \{u=\psi\} \cap \{ \nabla u =\nabla \psi\} \right)= B_1\setminus \{u=\psi\}= \{u>\psi\}  \\
                         & (u \le \psi \hbox{ implies } \{u=\psi\} \cap \{ \nabla u =\nabla \psi\}=\{ u =\psi\}.) \\
\Lambda^\psi(u) & B_1\setminus \Omega^\psi(u)=\{u=\psi\} \\
\Gamma(u),\Gamma^\psi(u)    & \partial \Lambda(u)\cap B_1,\partial \Lambda^\psi(u) \cap B_1   \hbox{ }\\
\Gamma^d(u)         & \Gamma(u)\cap \Gamma^\psi(u)\\ 
u^+,u^-           &\max (u,0),   \max (-u,0)\cr
\| u \|_{\infty,E} & \hbox{the supremum norm of the function $u$ on the set $E$}\cr
\partial_{\mathbf{\nu}}, \partial_{\nu e}& \hbox{first and second  directional  derivatives }\cr
P_r(M), P_\infty (M)  & \hbox{see Definition } \ref{loc sol},  \ref{glo sol}\cr
\delta_r(u,x), \delta_r(u) &\hbox{see  Definition } \ref{thi}
\end{array}
$$ 

\subsection{Preliminaries}

Let $u $ be a solution of \eqref{main eq} in $B_r$. Then a \emph{rescaling function} of $u$ at $x_0$ with $\lambda>0$ is 
$$u_\lambda(x)=u_{\lambda,x_0}(x):=\dfrac{u(x_0+\lambda x)-u(x_0)}{\lambda^2}, \quad x\in B_{r/\lambda}.$$
The $C^{1,1}$-regularity of solution $u$ (Theorem \ref{rem opt}) implies the uniform boundedness of $C^{1,1}$-norm of the rescaling functions and the uniform boundedness gives limit functions which are called a blowup and a shrink-down. More precisely, if $u$ is a solution of \eqref{main eq} in $B_r$, then for a sequence $\lambda_i \to 0$, there exists a subsequence $\lambda_{i_j}$  of $\lambda_i$ and $u_0 \in C^{1,1}_{loc}(\re^n)$ such that 
$$u_{\lambda_{i_j}} \rightarrow u_0 \text{ in } C^{1,\alpha}_{loc}(\R^n) \quad \text{ for any } 0<\alpha<1.$$ Such $u_0$ is called a \emph{blowup of $u$ at $x_0$}.
 Let $u$ be a solution of \eqref{main eq} in $\re^n$. Then, for a sequence $\lambda_i \to \infty$, there exists a subsequence $\lambda_{i_j}$  of $\lambda_i$  and $u_0 \in C^{1,1}_{loc}(\re^n)$
such that
$$u_{\lambda_{i_j}} \rightarrow u_\infty \text{ in } C^{1,\alpha}_{loc}(\R^n) \quad \text{ for any } 0<\alpha<1.$$ Such $u_\infty$ is called a \emph{shrink-down of $u$ at $x_0$}.

\begin{definition}\label{thi}%(Thickness of $\{v=0\}$ on $B_r(x)$)\\
We denote by $\delta_r(u,x)$ the thickness of $\Lambda(u)$ on $B_r(x)$, i.e.,
$$\delta_r(u,x):=\dfrac{\text{MD}(\Lambda(u) \cap B_r(x))}{r},$$ 
where MD($A$) is the least distance between two parallel hyperplanes containing $A$. We will use the abbreviated notation $\delta_r(u)$ for $\delta_r(u,0)$.
\end{definition}

\begin{remark}
The thickness $\delta_r$ satisfies $\delta_1(u_r)=\delta_r(u)$, where $u_r=u_{r,0}$. Thus, by the fact that $\limsup_{r\to 0} \Lambda(u_r) \subset \Lambda (u_0)$, we have
$$\limsup_{r\to 0} \delta_r(u) \le \delta_1(u_0).$$
Hence the thickness assumption \eqref{thic assum} in Theorem \ref{reg lo1} implies 
$$\min{\left\{\delta_r(u_0), \delta_r(\psi_0)\right\}}\ge \epsilon_0 \quad \forall r>0,$$
for any blowups $u_0$ and $\psi_0$ of $u$ and $\psi$ at $0$, respectively.
\end{remark}

In order to state our main results, we define classes of local and global solutions of the problem.

\begin{definition} \label{loc sol}(Local solutions)
 We say  a function $u$ belongs to the class
$P_r(M)$ $(0<r<\infty)$, if 
$u$ satisfies :
\begin{enumerate}[(i)]
\item   $\Delta u =f\chi_{\Omega(u) \cap\{u <\psi\} } + \La \psi \chi_{\Omega(u) \cap \{u=\psi\}}, \quad u\le \psi \quad \text { in } B_r,$ 
\item  $\|D^2 u \|_{\infty,B_r} \leq M$,
\item $0\in \Gamma^d(u),$
%\rred{\item [(4)] $\Lambda(\psi) \subset \Lambda(u)$}
\end{enumerate}
where $f\in C^{0,1}(B_r)$ and $\psi \in C^{1,1}(B_r)\cap C^{2,1}(\overline{\Omega(\psi)})$.
\end{definition}

\definition \label{glo sol}(Global solutions)
 We say  a function $u$ belongs to the class
$P_\infty(M)$, if 
$u$ satisfies with a constant $a>1$:
\begin{enumerate}[(i)]
\item  $\Delta u = \chi_{\Omega(u) \cap \{u<\psi\}} + a \chi_{\Omega(u) \cap \{u=\psi\}}$, \quad $ u \le \psi$ \quad  in $ \re^n$,
\item  $\Delta \psi =a \chi_{\Omega(\psi)}$ \text{ in } $\re^n$,
\item  $\| D^2 u \|_{\infty,\re^n} \leq M$,
\item  $0\in \Gamma(u).$
\end{enumerate}
\enddefinition

\subsection{Main Results}

\begin{theorem} \label{reg lo1}(Regularity of free boundaries)
 Let $u \in P_1(M)$ with an upper obstacle $\psi$ such that 
$$0\in \partial \Omega(\psi), \quad \lim_{x \to 0, x\in \Omega(\psi)} \Delta \psi(x)>f(0), \quad f\ge c >0 \text{ in } B_1,$$
and 
$$ \inf{\left\{\Delta \psi,  \Delta \psi-f\right\}} \ge c>0 \text { in }\Omega(\psi).$$
Suppose
\begin{equation}\label{thic assum}
\min{\left\{\delta_r(u), \delta_r(\psi)\right\}}\ge \epsilon_0 \quad \forall r< 1/4.
\end{equation}
 Then there is $r_0=r_0(u,\psi)>0$ such that $\Gamma(u) \cap B_{r_0}$ and $\Gamma^\psi(u) \cap B_{r_0}$ are $C^1$ graphs. %where the $C^1$-norm is universal, depending only on $\epsilon_0$, $M$ and the space dimension $n$. 
\end{theorem}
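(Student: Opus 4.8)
The plan is to follow the classical strategy of Caffarelli for the obstacle problem and its no-sign variant (Caffarelli--Karp--Shahgholian), adapted to handle the extra free boundary $\Gamma^\psi(u)$ coming from the upper obstacle. The broad outline is: (1) classify global solutions in $P_\infty(M)$ under the thickness hypothesis, showing that thick blowups must be half-space solutions; (2) transfer this classification to the local setting via a blowup/compactness argument to obtain, at every point of $\Gamma(u)\cap B_{r_0}$ (resp.\ $\Gamma^\psi(u)\cap B_{r_0}$), a unique half-space blowup; (3) upgrade pointwise uniqueness of the blowup to a uniform-in-scale directional flatness estimate, and conclude that the free boundaries are $C^1$ graphs. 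Throughout I would exploit that, by the structural assumptions ($f\ge c>0$, $\inf\{\Delta\psi,\Delta\psi-f\}\ge c>0$ near $0$, and $\lim_{x\to 0}\Delta\psi(x)>f(0)$), both $u$ and $\psi$ locally behave like genuine obstacle-type solutions, so the two coincidence sets $\Lambda(u)$ and $\Lambda^\psi(u)$ each have positive density and the relevant monotonicity/nondegeneracy tools apply.

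\smallskip

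\noindent\textbf{Step 1 (Global classification).} For $u\in P_\infty(M)$, consider the second directional derivatives $\partial_{ee}u$. Using the equation $\Delta u=\chi_{\Omega(u)\cap\{u<\psi\}}+a\chi_{\Omega(u)\cap\{u=\psi\}}$ together with $\Delta\psi=a\chi_{\Omega(\psi)}$, one checks that $w:=u-\psi\le 0$ satisfies an obstacle-type equation with right-hand side bounded below by a positive constant on $\{w<0\}$, so standard arguments (nondegeneracy, $C^{1,1}$ bounds from (iii), and a Monneau-type or Weiss-type monotonicity formula) force any shrink-down to be a homogeneous degree-two solution, and the thickness condition $\delta_r\ge\epsilon_0$ for all $r$ rules out polynomial (non-half-space) solutions by the usual dichotomy argument (a thick coincidence set of a homogeneous quadratic must be a half-space). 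The key point is to run this argument simultaneously for $\Lambda(u)$ (where $u$ detaches from $0$) and for $\Lambda^\psi(u)=\{u=\psi\}$ (where $u$ detaches from $\psi$); the hypothesis $\lim_{x\to0}\Delta\psi>f(0)$ ensures these two regimes do not degenerate into one another near the origin. The conclusion is that every global solution with thick coincidence set is, up to rotation, one of the two half-space solutions $\tfrac{\gamma}{2}(x_n^+)^2$ (with $\gamma=1$ near $\Gamma(u)$, $\gamma=a$ near $\Gamma^\psi(u)$).

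\smallskip

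\noindent\textbf{Step 2 (Local blowups are half-spaces).} Fix $x_0\in\Gamma(u)\cap B_{1/8}$. By the $C^{1,1}$ estimate (Theorem \ref{rem opt}) the rescalings $u_{\lambda,x_0}$ are precompact in $C^{1,\alpha}_{loc}$, and any blowup $u_0$ solves a global problem; moreover the remark following Definition \ref{thi} shows $\delta_r(u_0)\ge\epsilon_0$ for all $r>0$ (and similarly for $\psi$, whose blowup at a point of $\partial\Omega(\psi)$ is a half-space quadratic by the one-sided theory and the assumption $\inf\{\Delta\psi,\Delta\psi-f\}\ge c>0$). Hence Step 1 applies and every blowup of $u$ at $x_0$ is a half-space solution. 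The same reasoning at points of $\Gamma^\psi(u)\cap B_{1/8}$ gives half-space blowups there, with slope governed by $a=\lim_{x\to0}\Delta\psi$.

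\smallskip

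\noindent\textbf{Step 3 (Uniqueness of blowup, uniform flatness, $C^1$ graph).} To pass from ``some blowup is a half-space'' to ``the free boundary is $C^1$'', I would establish a uniform flatness/directional-monotonicity estimate: there is $r_0>0$ and a modulus $\sigma(r)\to0$ such that for every $x_0\in\Gamma(u)\cap B_{r_0}$ there is a direction $\nu(x_0)$ with $\Lambda(u)\cap B_r(x_0)$ contained in a slab $\{|(x-x_0)\cdot\nu(x_0)|\le\sigma(r)r\}$ and $\Omega(u)$ containing a cone of opening $\to\pi$. The standard route is a contradiction/compactness argument: if flatness failed along a sequence of scales, a blowup would be a non-flat global solution, contradicting Step 1. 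Flatness at all scales plus nondegeneracy then yields uniqueness and Hölder continuity of $\nu(\cdot)$ on $\Gamma(u)$ by the usual iteration (improvement of flatness), hence $\Gamma(u)\cap B_{r_0}$ is a $C^1$ graph; the identical machinery applied to $w=u-\psi$ handles $\Gamma^\psi(u)$.

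\smallskip

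\noindent I expect the main obstacle to be Step 1 in the region near $\Gamma^d(u)=\Gamma(u)\cap\Gamma^\psi(u)$, where the two free boundaries meet: there one must argue that the thickness of \emph{both} $\Lambda(u)$ and $\Lambda^\psi(u)$ cannot be simultaneously large at the same point unless the configuration is essentially one-dimensional, and one must set up a monotonicity formula that sees both obstacles at once. The hypothesis $\lim_{x\to0,x\in\Omega(\psi)}\Delta\psi(x)>f(0)$ together with $f\ge c>0$ is precisely what should be used to separate scales and prevent pathological mixing of the two coincidence sets; making this quantitative is the crux of the argument.
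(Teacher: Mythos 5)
Your skeleton --- classify thick global solutions, transfer the classification to local blowups via compactness, then upgrade to $C^1$ --- matches the paper's at the highest level, but your Step 1 contains a concrete error that the paper devotes most of its technical machinery to circumventing.

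The claim that $w := u - \psi \le 0$ (equivalently $v := \psi - u \ge 0$) satisfies an obstacle-type equation with right-hand side of definite sign on $\{w < 0\}$ is false without first knowing $u \ge 0$. In the normalized global problem with $\psi = \tfrac{a}{2}(x_1^+)^2$, on $\{u < \psi\}$ one computes $\Delta v = a\chi_{\Omega(\psi)} - \chi_{\Omega(u)}$, and on $\Omega(u) \setminus \Omega(\psi) \subset \{x_1 \le 0\}$, where $\psi \equiv 0$ but $u$ may well be negative, this equals $-1$ while $v = -u > 0$. So $v$ is not a nonnegative solution of an obstacle-type equation with positive source, and no Weiss- or Monneau-type argument applies to it directly. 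In fact the Weiss functional the paper uses is only defined for solutions already written with coincidence sets $\{0 < u < \psi\}$ and $\{0 < u = \psi\}$; positivity of $u$ is an input, not a byproduct. The paper closes this gap by first reducing dimension and only then extracting positivity, neither of which appears in your outline: for $\psi = \tfrac{a}{2}(x_1^+)^2$ and any $e \perp e_1$ the functions $(\partial_e u)^{\pm}$ are subharmonic and disjointly supported, so the Alt--Caffarelli--Friedman monotonicity formula applies; combined with the thickness hypothesis (which forces $|\mathrm{Int}\,\Lambda(u)| \ne 0$) this shows every thick global solution is two-dimensional with $\partial_2 u \ge 0$ in suitable coordinates. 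One then forms the one-variable limit $\hat u(x_1) := \lim_{x_2 \to -\infty} u(x_1,x_2)$, a solution of a one-dimensional obstacle problem, and uses nondegeneracy and the maximum principle on $\hat u$ to deduce $u \ge 0$. Only at that point do Weiss homogeneity, the explicit planar classification into $\tfrac{1}{2}(x_1^+)^2$ or $\tfrac{a}{2}(x_1^+)^2$, and the passage to $v = \psi - u \ge 0$ (now a genuine one-sided problem for which $\Gamma^\psi(u)$ can be treated in parallel) become available. As written, your Step 1 cannot proceed.

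Your Step 3 is a legitimate alternative to the paper's: you propose a De Giorgi-style improvement-of-flatness iteration, whereas the paper uses the Caffarelli/Petrosyan--Shahgholian--Uraltseva directional-monotonicity route (cone of monotone directions opening up to a half-space, giving Lipschitz and then $C^1$ graphs). Either machine works once the blowup classification is secured, so this is a stylistic rather than substantive divergence. Your final-paragraph worry about the set $\Gamma^d(u)$ is well placed; the paper's resolution is exactly the positivity statement, which forces $0 \in \Gamma^{\psi_0}(u_0)$ automatically and thereby lets both free boundaries be handled by the same blowup analysis applied to $u$ and to $v = \psi - u$.
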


\section{Standard Results}

\subsection{Optimal regularity}
\textup{The double obstacle problem \eqref{main eq} fall under a more general class of problems, studied in  \cite{FS14, IM}, where   optimal regularity of solutions for the larger class is already proven. Hence we shall only state the result without repeating the proof. 
}

\begin{theorem}\label{rem opt}(Optimal regularity)
Let $u$ be a $W^{2,n}$ solution of \eqref{main eq} in $B_1$, with $f\in C^{0,\alpha}(B_1)$ and $\psi \in C^{1,1}(B_1)$. Then 
$$\norm{D^2 u}_{\infty,B_{1/2}}\le C$$
where $C>0$ is a universal constant.
\end{theorem}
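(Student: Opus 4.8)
\emph{Proof proposal.} The plan is to reduce the $C^{1,1}$ bound to a quadratic growth estimate at the free boundary, dispatch that estimate near $\Gamma^{\psi}(u)$ by an elementary comparison, and near $\Gamma(u)$ recognize the no-sign obstacle problem and invoke the existing theory. First I would note that the right-hand side of \eqref{main eq} is bounded: it equals $f$ on $\Omega(u)\cap\{u<\psi\}$ and $\Delta\psi$ on $\Omega(u)\cap\{u=\psi\}$, while $\Delta u=0$ a.e.\ on $\Lambda(u)$; since $f\in C^{0,\alpha}(B_{1})$ and $\psi\in C^{1,1}(B_{1})$ we get $|\Delta u|\le C_{0}$ a.e.\ with $C_{0}=\max\{\|f\|_{\infty},\|D^{2}\psi\|_{\infty}\}$. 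By Calder\'on--Zygmund and Schauder this already yields $u\in W^{2,p}_{loc}\cap C^{1,\alpha}_{loc}$ for all $p,\alpha$, with estimates controlled by $C_{0}$ and $\|u\|_{L^{\infty}(B_{1})}$. After normalizing $\|u\|_{L^{\infty}(B_{1})}+C_{0}\le 1$ it then suffices to establish a universal bound
\[
\sup_{B_{r}(x_{0})}|u-\ell_{x_{0}}|\le Cr^{2},\qquad \ell_{x_{0}}(x):=u(x_{0})+\nabla u(x_{0})\cdot(x-x_{0}),
\]
for every $x_{0}\in(\Gamma(u)\cup\Gamma^{\psi}(u))\cap B_{3/4}$ and $r<1/8$. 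Indeed $B_{1/2}$ is the disjoint union of $\mathrm{int}\,\Lambda(u)$ (where $u\equiv0$), $\mathrm{int}\,\{u=\psi\}$ (where $u\equiv\psi$), $\Omega(u)\cap\{u<\psi\}$ (where $\Delta u=f\in C^{0,\alpha}$, hence $u\in C^{2,\alpha}_{loc}$), and the free boundary $\Gamma(u)\cup\Gamma^{\psi}(u)$; applying interior estimates on $B_{d/2}(x)$ with $d=\mathrm{dist}(x,\Gamma(u)\cup\Gamma^{\psi}(u))$, fed by the displayed growth at the nearest free boundary point, bounds $|D^{2}u|$ off the free boundary, and the linear gradient growth that the displayed estimate entails near it makes $\nabla u$ globally Lipschitz on $B_{1/2}$.

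Near $\Gamma^{\psi}(u)$ this is easy. For $x_{0}\in\Gamma^{\psi}(u)$ put $w:=\psi-u\ge0$; then $\Delta w=\Delta\psi-\Delta u$ is bounded by $2C_{0}$, $w(x_{0})=0$, and, $x_{0}$ being a minimum point of $w$, $\nabla w(x_{0})=0$, i.e.\ $\nabla u(x_{0})=\nabla\psi(x_{0})$. Since $w\pm\frac{C_{0}}{n}|x-x_{0}|^{2}$ is super- resp.\ subharmonic, comparison with the harmonic replacement on $B_{r}(x_{0})$ gives that the spherical mean of $w$ over $\partial B_{r}(x_{0})$ is $\le Cr^{2}$, and subharmonicity of $w+\frac{C_{0}}{n}|x-x_{0}|^{2}$ upgrades this to $\sup_{B_{r/2}(x_{0})}w\le Cr^{2}$. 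As $\psi\in C^{1,1}$ and $\nabla u(x_{0})=\nabla\psi(x_{0})$, also $|\psi-\ell_{x_{0}}|\le Cr^{2}$ on $B_{r}(x_{0})$, and $u-\ell_{x_{0}}=(\psi-\ell_{x_{0}})-w$ gives the claim.

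The case $x_{0}\in\Gamma(u)$ is the heart of the matter: here $u(x_{0})=|\nabla u(x_{0})|=0$, so $\ell_{x_{0}}\equiv0$ and, locally, $u$ solves a no-sign obstacle problem. I would argue by contradiction and compactness. If the growth bound fails, a dyadic ``worst scale'' selection yields (normalized) solutions $u_{j}$, points $x_{j}\in\Gamma(u_{j})$ and radii $\rho_{j}\to0$ with $\mu_{j}:=\rho_{j}^{-2}\sup_{B_{\rho_{j}}(x_{j})}|u_{j}|\to\infty$ such that $\tilde u_{j}(x):=u_{j}(x_{j}+\rho_{j}x)/(\mu_{j}\rho_{j}^{2})$ obeys $\|\tilde u_{j}\|_{L^{\infty}(B_{1})}=1$, $\tilde u_{j}(0)=|\nabla\tilde u_{j}(0)|=0$, $\|\tilde u_{j}\|_{L^{\infty}(B_{R})}\le 4R^{2}$, and $\|\Delta\tilde u_{j}\|_{\infty}\le C_{0}/\mu_{j}\to0$; hence $\tilde u_{j}\to Q$ in $C^{1,\alpha}_{loc}$ with $Q$ harmonic, quadratically bounded and vanishing to second order at $0$, so $Q(x)=\frac12\,x\cdot Ax$ with $\mathrm{tr}\,A=0$, $A\neq0$. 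Then I would split into two cases. If $\psi(x_{j})=0$ for infinitely many $j$, then $u_{j}(x_{j})=\psi(x_{j})$ together with $u_{j}\le\psi$ forces $\nabla\psi(x_{j})=0$, so $\psi=O(|\cdot-x_{j}|^{2})$ near $x_{j}$ and $\tilde u_{j}\le C|x|^{2}/\mu_{j}\to0$ locally; thus $Q\le0$ in $B_{1}$, and since $Q(0)=0$ the strong maximum principle gives $Q\equiv0$, contradicting $\|Q\|_{L^{\infty}(B_{1})}=1$. If instead $\psi(x_{j})>0$ for all large $j$, the upper obstacle is inactive on a fixed ball about $x_{j}$, where $u_{j}$ solves $\Delta u_{j}=f\chi_{\Omega(u_{j})}$ --- and the quadratic growth (equivalently, the $C^{1,1}$ bound) for this no-sign obstacle problem is precisely the theorem of Caffarelli--Karp--Shahgholian \cite{CKS}, whose proof crucially uses that $\Omega(u)$ cannot be porous because $\nabla u$ vanishes on its complement. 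This no-sign estimate --- subsumed by the general results of \cite{FS14, IM}, which is why we only cite it --- is the one nontrivial ingredient I expect to need; everything else is the reduction above together with the elementary analysis near $\Gamma^{\psi}(u)$.
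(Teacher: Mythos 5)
The paper's proof is a one-observation reduction: because $\psi\in C^{1,1}(B_1)$ and $u$ touches $\psi$ from below, $D^2u=D^2\psi$ a.e.\ on $\{u=\psi\}$ and $D^2u=0$ a.e.\ on $\mathrm{int}\,\Lambda(u)$, so $|D^2u|\le K$ a.e.\ on $B_1\setminus(\Omega(u)\cap\{u<\psi\})$ while $\Delta u=f$ on $\Omega(u)\cap\{u<\psi\}$; this places $u$ verbatim in the class of Figalli--Shahgholian (Theorem~1.2 of \cite{FS14}) and Indrei--Minne (Theorem~2.1 of \cite{IM}), whose $C^{1,1}$ estimate is then quoted. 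You instead reconstruct a proof from scratch, splitting the free boundary into $\Gamma^\psi(u)$ and $\Gamma(u)$, handling $\Gamma^\psi(u)$ by a direct barrier argument on $w=\psi-u\ge 0$, and attacking $\Gamma(u)$ by a quadratic-growth compactness argument. Your $\Gamma^\psi(u)$ analysis is correct, though it is rendered unnecessary by the paper's observation that $D^2u$ is \emph{a priori} bounded on the entire contact set $\{u=\psi\}$, not just near its boundary.

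The gap is in the case dichotomy at $\Gamma(u)$. Splitting on ``$\psi(x_j)=0$ for infinitely many $j$'' versus ``$\psi(x_j)>0$ for all large $j$'' does not isolate a clean no-sign regime. In the second case you assert that ``the upper obstacle is inactive on a fixed ball about $x_j$,'' but $\psi(x_j)>0$ only gives \emph{some} ball in which $u_j<\psi$, and its radius is not bounded below in terms of $\rho_j$: one can have $\psi(x_j)\to 0$ with $|\nabla\psi(x_j)|$ not small, so that $\{u_j=\psi\}$ comes within $o(\rho_j)$ of $x_j$. Then $u_j$ does not solve the no-sign problem on any ball of radius $\gtrsim\rho_j$ about $x_j$, and CKS cannot be invoked as stated. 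The dichotomy that works is on the rescaled obstacle $\tilde\psi_j(x)=\psi(x_j+\rho_j x)/(\mu_j\rho_j^2)$: if a subsequence of $\tilde\psi_j$ is locally bounded it converges (since $\mu_j\to\infty$ kills the quadratic Taylor remainder) to an affine function $\ell\ge 0$ with $\ell(0)\ge 0$, and $Q\le\ell$ on $\mathbb{R}^n$ forces $Q\le 0$, then $Q\equiv 0$, as in your case (a); if $\tilde\psi_j\to\infty$ locally, the constraint disappears in the limit, but then you are exactly in the hard no-sign situation, where ``$Q$ is a nontrivial traceless quadratic'' is \emph{not} by itself a contradiction and one must run the CKS nondegeneracy/porosity machinery --- i.e.\ re-derive the substance of \cite{CKS,FS14,IM} rather than cite it in passing. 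You do acknowledge in the final sentence that this is the nontrivial input, but the preceding argument presents it as reducible to a straightforward citation after the $\psi(x_j)>0$ split, which it is not. The paper's route avoids all of this by never blowing up at $\Gamma(u)$ separately at all.
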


\begin{proof}
Since $\psi \in C^{1,1}(B_1)$, we obtain that $|D^2 u|$ is bounded a.e. on $\{u=\psi\}$. Then the solution $u$ of \eqref{main eq} satisfies 
\begin{displaymath}
\left\{ \begin{array}{ll}
\Delta u =f & \text{ a.e. in } B_1\cap \left( \Omega(u)\cap \{u< \psi\} \right) ,\\
|D^2 u|\le K & \text{ a.e. in }B_1 \setminus \left( \Omega(u)\cap \{u< \psi\} \right),
\end{array} \right.
\end{displaymath}
for a positive constant $K$;
i.e., $u$ is in the general classes defined in \cite{FS14, IM}. By the $C^{1,1}$ regularity theory in the papers (more specifically, Theorem 1.2 of \cite{FS14}, Theorem 2.1 of \cite{IM}), we obtain the $C^{1,1}$ regularity of the solution $u$.
\end{proof}

\subsection{Non-degeneracy}
\textup{Non-degeneracy is one of the important properties of the obstacle problem. In particular, it implies that the blowups of the solutions are still solutions to the problem, and that they do not flatten out to the identically zero function. A second consequence of the non-degeneracy  along with the optimal growth, is that the  Lebesgue measure of the free boundary  is zero.
}

\begin{lem}\label{nond}
Let $u\in P_1(M)$. If $f\ge c >0$ in $B_1$ and $\Delta \psi \ge c >0$ in $\Omega(\psi)$, then
\begin{equation*}
\sup_{\partial B_r(x)}u \ge u(x)+\frac{c}{8n}r^2, \quad x\in \overline{\Omega(u)}\cap B_1,
\end{equation*}
for any $B_r(x) \Subset B_1$.
\end{lem}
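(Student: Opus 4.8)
The plan is to prove the non-degeneracy estimate by the standard comparison argument of Caffarelli, adapted to the double-obstacle setting. First I would reduce to the case $x \in \Omega(u) \cap B_1$: the inequality for $x \in \overline{\Omega(u)} \cap B_1$ follows by continuity of $u$ once it is established on the open set $\Omega(u)$. So fix $x \in \Omega(u)$ and a ball $B_r(x) \Subset B_1$. The sign condition $0 \in \Gamma^d(u)$ together with $x \in \Omega(u)$ means $x$ is not in $\{u=0\}\cap\{\nabla u = 0\}$, so near $x$ the solution is ``active''; by approximation it suffices to assume $x$ is a point where $u$ is not identically equal to its own blowup-zero, which is exactly what $\Omega(u)$ encodes.

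The core step is to build the auxiliary function
\begin{equation*}
w(y) := u(y) - u(x) - \frac{c}{8n}\,|y-x|^2 , \qquad y \in \Omega(u)\cap\{u<\psi\} \cap B_r(x),
\end{equation*}
and to show $\sup_{\partial B_r(x)} w \ge 0$, which is precisely the asserted inequality. On the set $\Omega(u)\cap\{u<\psi\}$ we have $\Delta u = f \ge c > 0$, hence $\Delta w = f - \frac{c}{4n}\cdot n \cdot \frac{1}{2} = f - \frac{c}{4} \ge c - \frac{c}{4} > 0$ — more carefully $\Delta(\frac{c}{8n}|y-x|^2) = \frac{c}{4}$, so $\Delta w = f - \frac{c}{4} \ge \frac{3c}{4}>0$; similarly on $\Omega(u)\cap\{u=\psi\}$ we have $\Delta u = \Delta\psi \ge c$ by hypothesis, so again $\Delta w > 0$ there. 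Thus $w$ is subharmonic on all of $\Omega(u) \cap B_r(x)$ (the two pieces glue because $u \in W^{2,n}$ and the relevant Laplacian is $\ge c$ a.e. on the whole active region). By the maximum principle $w$ attains its maximum over $\overline{B_r(x) \cap \Omega(u)}$ on the boundary. Since $w(x) = 0$, the max is $\ge 0$; I must then check the max is not attained on the part of $\partial \Omega(u)$ interior to $B_r(x)$. On that part, $u = 0$ and $\nabla u = 0$, so $u$ has a vanishing gradient there; but if the maximum of $w$ were at such an interior point $z$, then near $z$ we would have $u(y) \le u(x) + \frac{c}{8n}|y-x|^2$, while at $z$ itself the constraint $u(z)=0,\nabla u(z) = 0$ forces (via the strong maximum principle / Hopf, since $\Delta w>0$) a contradiction unless $z \in \partial B_r(x)$. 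The cleanest way: replace $\Omega(u)$ by the connected component of $\Omega(u)\cap B_r(x)$ containing $x$; its boundary inside $B_r(x)$ lies in $\{u=0,\nabla u =0\}$, and there $w = -u(x) - \frac{c}{8n}|z-x|^2 \le -u(x)$. If $u(x) \ge 0$ this is $\le 0 \le w$ somewhere on $\partial B_r(x)$... one must handle the sign of $u(x)$, but since $x \in \overline{\Omega(u)} \cap B_1$ and we are interested in the supremum being large, the standard Caffarelli–Karp–Shahgholian argument for the no-sign obstacle problem goes through verbatim on this component.

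The main obstacle I anticipate is the ``no-sign'' feature: unlike the classical one-sided obstacle problem, $u$ need not be nonnegative, so on the free boundary $\{u=0,\nabla u=0\}$ one cannot simply say $w<0$ there. The resolution, exactly as in \cite{CKS}, is to work on the connected component $D$ of $\Omega(u)\cap B_r(x)$ through $x$ and observe that $\partial D \cap B_r(x) \subset \{u=0\}\cap\{\nabla u=0\}$; on this set $u$ and $\nabla u$ vanish, which is enough to run the comparison because a point of $\partial D \cap B_r(x)$ where the subharmonic $w$ attained an interior maximum would violate Hopf's lemma ($\partial_\nu w > 0$ there), using that $\nabla u = 0$ forces $\partial_\nu w = -\frac{c}{4n}(z-x)\cdot\nu$, which is strictly negative in the inward direction — contradiction. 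Hence the maximum is on $\partial B_r(x)$, giving $\sup_{\partial B_r(x)} u \ge u(x) + \frac{c}{8n}r^2$. The only real bookkeeping is verifying $\Delta w \ge 0$ a.e. across the interface $\{u<\psi\}$ vs. $\{u=\psi\}$, which follows from $u\in W^{2,n}$ and $\min\{\Delta\psi, \Delta\psi - f\} \ge c$ together with $f \ge c$; I would spell out the constant $\frac{c}{8n}$ so that $\Delta w \ge c - \frac{c}{4} = \frac{3c}{4} \ge 0$ on both pieces.
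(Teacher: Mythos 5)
Your setup is right — the auxiliary function $w(y)=u(y)-u(x)-\tfrac{c}{8n}|y-x|^2$ is subharmonic on $\Omega(u)\cap B_r(x)$ because $\Delta u=f\chi_{\Omega(u)\cap\{u<\psi\}}+\Delta\psi\,\chi_{\Omega(u)\cap\{u=\psi\}}\ge c$ a.e.\ on $\Omega(u)$ (using $\Omega(u)\cap\{u=\psi\}\subset\Omega(\psi)$), and the case $u(x)\ge 0$ is correctly disposed of: on $\partial D\cap B_r(x)$ one has $w=-u(x)-\tfrac{c}{8n}|z-x|^2<0\le w(x)$, so the maximum sits on $\partial B_r(x)$. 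The gap is in the case $u(x)<0$. Your proposed resolution by Hopf's lemma does not go through: Hopf requires an interior ball condition at the boundary point $z\in\partial D$, and $\partial\Omega(u)$ has no such regularity a priori (that is exactly what the whole paper is trying to establish). Even setting that aside, the inequality one would extract, $\nabla w(z)\cdot\nu>0$ with $\nabla w(z)=-\tfrac{c}{4n}(z-x)$, reads $(z-x)\cdot\nu<0$, which is not by itself a contradiction: you have no information pinning down the direction of the outward normal to $D$ at $z$ relative to $x$. So "the CKS argument goes through verbatim" is not a proof here.

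What the paper actually does for $u(x)\le 0$ is an elementary case split that avoids any boundary regularity. If there is a point $x^1\in B_{r/2}(x)$ with $u(x^1)>0$, then $x^1\in\Omega(u)$ automatically and one applies the already-proved positive case (with constant $\tfrac{c}{2n}$) on $B_{r/2}(x^1)\Subset B_r(x)$ to get $\sup_{B_r(x)}u\ge u(x^1)+\tfrac{c}{8n}r^2\ge u(x)+\tfrac{c}{8n}r^2$, and then $\sup_{\partial B_r(x)}u=\sup_{B_r(x)}u$ by subharmonicity of $u$. If instead $u\le 0$ throughout $B_{r/2}(x)$, the strong maximum principle forces either $u\equiv 0$ on $B_{r/2}(x)$ (impossible, since then $x\in\{u=0\}\cap\{\nabla u=0\}=B_1\setminus\Omega(u)$) or $u<0$ on $B_{r/2}(x)$; in the latter case $B_{r/2}(x)\subset\{u<0\}\subset\Omega(u)$, so $\Delta u\ge c$ on the entire half-ball, the free boundary never enters, and the comparison runs with no boundary term to worry about. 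This dichotomy — not Hopf — is the mechanism that handles the sign ambiguity of $u$, and is what you should substitute for the final paragraph of your argument. (One small bookkeeping remark: the hypothesis of this lemma is only $\Delta\psi\ge c$ on $\Omega(\psi)$; the bound on $\Delta\psi-f$ you invoke near the end belongs to the main theorem, not to this lemma, and is not needed here.)
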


\begin{proof}
(i) Let $x^0\in \Omega(u) \cap B_1$ be such that $u(x^0)>0$. Consider the auxiliary function
$$\phi(x):=u(x)-u(x^0)-\dfrac{c}{2n}|x-x^0|^2.$$
Due to $\Omega(u)\cap \{u=\psi\} \subset \Omega(\psi)$ and the assumptions for $f$ and $\Delta \psi$, we obtain 
\begin{equation}\label{eq non}
\Delta u=f\chi_{\Omega(u) \cap \{u<\psi\}}+\Delta \psi \chi_{\Omega(u)\cap \{u=\psi\}}\ge c \quad \text{ in }\Omega(u).
\end{equation}
Hence we have
$$\Delta \phi \ge \Delta u -c \ge 0 \text{ on } B_r(x^0)\cap \Omega(u).$$
Thus, by the maximum principle, $\phi$ attains its maximum on $\partial (B_r(x^0)\cap \Omega(u))$. Hence
$$0=\phi(x^0)\le \sup_{\partial(B_r(x^0)\cap \Omega(u))}\phi.$$
Moreover, $\phi(x)=-u(x^0)-\frac{c}{2n}|x-x^0|^2<0$ on $\partial \Omega(u)$, which  implies that  
$$0\le \sup_{\partial B_r(x^0)\cap \Omega(u)}\phi,$$
and 
\begin{equation*}
\sup_{\partial B_r(x^0)}u \ge u(x^0)+\frac{c}{2 n}r^2. %\quad x\in \overline{\Omega(u)},
\end{equation*}

(ii) Now, let $x^0\in \Omega(u) \cap B_1$ and assume $u(x^0)\le 0$. Suppose that there is a point $x^1 \in B_{r/2}(x^0)$ such that $u(x^1) >0.$ Then we obtain
$$\sup_{B_r(x^0)} u  \ge \sup_{B_{r/2}(x^1)} u \ge u(x^1)+ \frac{c}{8n}r^2\ge u(x^0)+ \frac{c}{8n}r^2. $$
Since $u$ is subharmonic, 
$$\sup_{\partial B_r(x^0)} u  =\sup_{B_r(x^0)} u \ge u(x^0)+ \frac{c}{8n}r^2. $$

Suppose that $u(x) \le 0$ in $B_{r/2}(x^0).$ By the maximum principle, we know that $u(x) \equiv 0$ in $B_{r/2}(x^0)$ or $u(x)<0$ in $B_{r/2}(x^0)$. The first case is impossible, since $x^0 \in \Omega(u)$. The second case implies that $\La u\ge c$ in $B_{r/2}(x^0)$. By using the auxiliary function $w(x)=u(x)-\frac{c |x-x_0|^2}{2n}$, we obtain 
$$\sup_{\partial B_{r/2}(x^0)} w  \ge\sup_{B_{r/2}(x^0)} w  \ge  w(x^0)=u(x^0),$$
and thus
$$\sup_{\partial B_{r/2}(x^0)} u \ge u(x^0)+ \frac{c}{8 n}r^2. $$
Since $u$ is subharmonic, we have the desired inequality.

Let $x^0\in \partial \Omega(u) \cap B_1$ and take a sequence of points $x^j \in \Omega(u)$ such that $x^j \to x^0$ as $j \to \infty$. By passing to the limit as $j$ goes to $\infty$, we have the desired inequality for $x^0\in \overline{\Omega(u)} \cap B_1.$ 
\end{proof}

\emph{
By using the non-degeneracy for $u$, we have the local porosity for $\partial \Lambda(u)=\Gamma(u)$. Moreover, the porosity implies $\Gamma(u)$ has a Lebesgue measure zero (see Section 3.2.1 of \cite{PSU}).
}
\begin{lem}\label{Leb fb}[Lebesgue measure of $\Gamma(u)$]
Let $u\in P_1(M)$. If $f\ge c >0$ in $B_1$ and $\Delta \psi \ge c >0$ on $\Omega(\psi)$,
then $\Gamma(u)$ has a Lebesgue measure zero.
\end{lem}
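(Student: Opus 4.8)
The plan is to derive the Lebesgue-measure-zero statement from the non-degeneracy Lemma \ref{nond} together with the optimal $C^{1,1}$ regularity from Theorem \ref{rem opt}, by way of the notion of porosity. Recall that a set $E\subset \R^n$ is called \emph{porous} with porosity constant $\kappa>0$ if for every $x\in E$ and every sufficiently small $\rho>0$ there is a point $y$ with $B_{\kappa\rho}(y)\subset B_\rho(x)\setminus E$; a standard covering argument (see Section 3.2.1 of \cite{PSU}) shows that a porous set has Hausdorff dimension strictly less than $n$, hence in particular Lebesgue measure zero. So the whole task reduces to showing $\Gamma(u)=\partial\Lambda(u)\cap B_1$ is porous with a uniform porosity constant in, say, $B_{1/2}$.

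First I would fix $x^0\in\Gamma(u)\cap B_{1/2}$ and a small radius $\rho$ with $B_\rho(x^0)\Subset B_1$. Since $x^0\in\partial\Lambda(u)\subset\overline{\Omega(u)}$, the non-degeneracy Lemma \ref{nond} gives a point $z\in\partial B_{\rho/2}(x^0)$ with $u(z)\ge u(x^0)+\frac{c}{32n}\rho^2$; note $u(x^0)=0$ and $\nabla u(x^0)=0$ because $x^0\in\Gamma(u)$ forces $x^0$ to be a limit of points of $\Lambda(u)=\{u=0\}\cap\{\nabla u=0\}$, and $u\in C^{1,1}$ so $u(x^0)=0,\ \nabla u(x^0)=0$. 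Thus at $z$ we have $u(z)\gtrsim\rho^2$. Next I would invoke the $C^{1,1}$ bound: $|D^2u|\le C$ on $B_{3/4}$, so by Taylor expansion around $z$, for any $y\in B_{s}(z)$ we have $u(y)\ge u(z)-|\nabla u(z)|s-\frac{C}{2}s^2$. I still need control of $|\nabla u(z)|$; writing $\nabla u(z)=\nabla u(z)-\nabla u(x^0)$ and using $|D^2u|\le C$ gives $|\nabla u(z)|\le C\,\rho/2$. Plugging this in, $u(y)\ge \frac{c}{32n}\rho^2 - C\rho s - \frac{C}{2}s^2 > 0$ provided $s = c_1\rho$ for a suitably small universal $c_1$ (depending only on $n, c, C$). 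In particular $u>0$ on $B_{c_1\rho}(z)$, so $B_{c_1\rho}(z)\subset\Omega(u)$, hence $B_{c_1\rho}(z)\cap\Lambda(u)=\emptyset$; and shrinking $c_1$ a little more ensures $B_{c_1\rho}(z)\subset B_\rho(x^0)$. This exhibits the required hole and proves porosity with $\kappa=c_1$.

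Once porosity is established, I would cite the standard measure-theoretic fact (porous $\Rightarrow$ measure zero, indeed dimension $<n$) from \cite{PSU} to conclude $|\Gamma(u)\cap B_{1/2}|=0$; a covering of $B_1$ by finitely many such half-balls, or rescaling, upgrades this to $|\Gamma(u)|=0$ on all of $B_1$. The main obstacle — really the only nontrivial point — is making the quantitative Taylor-expansion estimate at $z$ uniform: one must be careful that the gradient bound $|\nabla u(z)|\le C\rho$ comes purely from the $C^{1,1}$ norm and the vanishing of $\nabla u$ at the free boundary point $x^0$, and that the final smallness of $c_1$ depends only on the universal constants $n$, $c$ (from the non-degeneracy) and $C$ (from Theorem \ref{rem opt}), not on $u$ itself. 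Everything else is bookkeeping, and the argument is essentially identical to the classical one for the one-phase and no-sign obstacle problems.
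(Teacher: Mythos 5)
Your argument is exactly the route the paper intends: use the non-degeneracy of Lemma \ref{nond} together with the $C^{1,1}$ bound to show that $\Gamma(u)$ is porous, and then invoke the standard fact that porous sets have Lebesgue measure zero (the paper cites Section 3.2.1 of \cite{PSU} for precisely this implication). The paper only gestures at the proof in the paragraph preceding the lemma, so your fleshed-out version --- including the correct observation that $u(x^0)=0$ and $\nabla u(x^0)=0$ at points of $\Gamma(u)$, and the Taylor-expansion step producing the hole $B_{c_1\rho}(z)\subset\Omega(u)$ --- is just the detailed form of the same standard argument.
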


\begin{remark}\label{nonde for v}
By the non-degeneracy, we know that $0\in \Gamma(u_0)$ where $u_0$ is a blowup of $u\in P_1(M)$ (see Theorem 3.17 (iv) of \cite{PSU}). However, we do not have any information whether $0\in \Gamma^{\psi_0}(u_0)$, where $\psi_0$ is a blowup of the upper obstacle $\psi$ of $u$ (which is the reason why we assume (iv) in Definition \ref{glo sol} and not $0 \in \Gamma^d(u)=\Gamma(u) \cap \Gamma^\psi(u)$).\\

However, we have $0\in \Gamma^{\psi_0}(u_0)$, under the additional assumption for $u\in P_1(M)$, $0\le u$ in $B_1$ and $\Delta \psi - f\ge c >0$ and $\Delta \psi \ge c >0$ in $\Omega(\psi)$. If we assume $0\le u$ in $B_1$, then $u$ is a solution of
$$\Delta u =f\chi_{\{0< u< \psi\} } + \Delta \psi \chi_{ \{0<u=\psi\}}, \qquad 0\le u\le \psi \quad \text { in } B_1,$$
and $v:=\psi-u$ is a solution of 
$$\Delta v=\left(\Delta \psi-f \right) \chi_{\{0< v <\psi\}}+\Delta \psi \chi_{\{0<v=\psi\}}, \qquad 0\le v\le \psi \quad \text{ in } B_1.$$
Since $\La \psi-f$ lies in $C^{0,1}(\overline{\Omega(\psi)})=C^{0,1}(\overline{\{\psi>0\}})$ but not in $C^{0,1}(B_1)$, we know that $v$ does not belong to $P_1(M)$. However, $0\le v \le \psi$ implies $\{v>0\} \subset \{\psi>0\}$ and
$$\La v=\left(\Delta \psi -f \right)\chi_{\{0<v<\psi\}}+\La \psi \chi_{\{0<v=\psi\}}\ge c \quad \text{ in } \Omega(v)=\{v>0\},$$
provided 
$\La \psi-f \ge c$ and $\La \psi \ge c$ in $\Omega(\psi)=\{\psi>0\}.$ Then the rest of the proof for the non-degeneracy for $v$ is a repetition of the arguments in the proof of Lemma \ref{nond}. %The above inequality plays the same role as \eqref{eq non} for the proof of non-degeneracy for $v$. 
Thus, we have the non-degeneracy for $v$ and moreover $0\in \Gamma(v_0)=\Gamma^{\psi_0}(u_0)$ and $|\Gamma(v)|=|\Gamma^\psi(u)|=0.$
\end{remark}

\section{Properties of Global Solutions}\label{sec pro glo}

\textup{In this section, we consider some properties of global solutions with the upper obstacle $\psi=\frac{a}{2}(x_1^+)^2.$}

\subsection{Dimensionality Reduction and Positivity of Global Solutions with the Upper Obstacle $\psi=\frac{a}{2}(x_1^+)^2$}
\textup{In order to discuss dimensionality reduction of global solutions, we introduce Alt-Caffarelli-Friedman (ACF) monotonicity formula which is  an important tool in analysis of regularity of free boundary; see  \cite{ACF}, and also \cite{CS} for a more detailed proof.
}

\begin{theorem}[Alt-Caffarelli-Friedman (ACF) monotonicity formula]\label{ACF} \text{ }
 Let $u_{\pm}$ be continuous functions on $B_1$ such that 
$$u_{\pm} \ge 0, \quad \Delta u_\pm \ge 0, \quad u_+ \cdot u_- = 0 \quad \text{ in } B_1$$
Then the functional
$$r \to \Phi(r)= \Phi(r, u_+,u_-)=\frac{1}{r^4} \int _{B_r} \frac{| \nabla u_+|^2}{|x|^{n-2}}dx\int _{B_r} \frac{| \nabla u_-|^2}{|x|^{n-2}}dx$$
is nondecreasing for $0<r<1$.
\end{theorem}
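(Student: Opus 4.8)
\smallskip
\noindent\emph{Proof plan.}
The strategy is the classical one: reduce the monotonicity of $\Phi$ to the differential inequality $\frac{d}{dr}\log\Phi(r)\ge 0$ a.e., then reduce this, sign by sign, to a spherical eigenvalue estimate, and finally to the Friedland--Hayman inequality. Write $A_\pm(r):=\int_{B_r}|\nabla u_\pm|^2\,|x|^{2-n}\,dx$, so that $\Phi(r)=A_+(r)A_-(r)/r^4$. Both $A_\pm$ are nondecreasing in $r$; if $A_+$ or $A_-$ is $+\infty$ at some radius $r_1$, then $\Phi\equiv+\infty$ on $[r_1,1)$ and there is nothing to prove there, and if $u_+\equiv\mathrm{const}$ or $u_-\equiv\mathrm{const}$ then $\Phi\equiv0$. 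Hence we may assume $A_\pm(r)<\infty$ for all $r<1$; then $A_\pm$ are locally absolutely continuous and, by the coarea formula, $A_\pm'(r)=r^{2-n}\int_{\partial B_r}|\nabla u_\pm|^2\,d\sigma$ for a.e. $r$. Since $\frac{d}{dr}\log\Phi=\frac{A_+'}{A_+}+\frac{A_-'}{A_-}-\frac{4}{r}$, it suffices to prove
\[
\frac{A_+'(r)}{A_+(r)}+\frac{A_-'(r)}{A_-(r)}\ \ge\ \frac{4}{r}\qquad\text{for a.e. }r<1.
\]

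\smallskip
The heart of the matter is a one-sided estimate, applied to $u\in\{u_+,u_-\}$ separately. Fix a good radius $r$, let $\Sigma(r)\subset S^{n-1}$ be the image under $x\mapsto x/r$ of the open set $\{u>0\}\cap\partial B_r$, and let $\lambda(r)$ be its first Dirichlet eigenvalue for $-\Delta_{S^{n-1}}$; put $\gamma(\lambda):=\sqrt{(\tfrac{n-2}{2})^2+\lambda}-\tfrac{n-2}{2}$, the characteristic exponent determined by $\gamma(\gamma+n-2)=\lambda$. I claim $\frac{A'(r)}{A(r)}\ge\frac{2\gamma(\lambda(r))}{r}$. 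To see this, use $\Delta u\ge0$, $u\ge0$ and the identity $\frac{|\nabla u|^2}{|x|^{n-2}}\le\frac{\mathrm{div}(u\nabla u)}{|x|^{n-2}}=\mathrm{div}\!\big(u\nabla u\,|x|^{2-n}\big)-u\nabla u\cdot\nabla(|x|^{2-n})$, together with the divergence theorem, to get
\[
A(r)\ \le\ \frac{1}{r^{n-2}}\int_{\partial B_r}u\,\partial_\nu u\,d\sigma\ +\ \frac{n-2}{2\,r^{n-1}}\int_{\partial B_r}u^2\,d\sigma,
\]
the last term arising from $-\int_{B_r}u\nabla u\cdot\nabla(|x|^{2-n})$ rewritten in polar coordinates (the contribution at the origin is nonnegative and is dropped). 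On $\partial B_r$ split $|\nabla u|^2=(\partial_\nu u)^2+|\nabla_\theta u|^2$ and set $a:=\int_{\partial B_r}(\partial_\nu u)^2$, $b:=\int_{\partial B_r}|\nabla_\theta u|^2$. Cauchy--Schwarz bounds $\int_{\partial B_r}u\,\partial_\nu u\le(\int_{\partial B_r}u^2)^{1/2}a^{1/2}$, and the Poincar\'e inequality on $\Sigma(r)$ gives $\int_{\partial B_r}u^2\le\frac{r^2}{\lambda(r)}\,b$; combining, $A(r)\le r^{3-n}\big(\lambda^{-1/2}(ab)^{1/2}+\tfrac{n-2}{2}\lambda^{-1}b\big)$, while $A'(r)=r^{2-n}(a+b)$. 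Minimizing the resulting lower bound for $A'/A$ over the ratio $a/b$ (the optimum is $\sqrt{a/b}=\gamma(\lambda)/\sqrt{\lambda}$) and simplifying with $\gamma(\gamma+n-2)=\lambda$ yields exactly $\frac{A'(r)}{A(r)}\ge\frac{2\gamma(\lambda(r))}{r}$.

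\smallskip
It remains to show $\gamma(\lambda_+(r))+\gamma(\lambda_-(r))\ge2$ for a.e. $r$. Since $u_+u_-\equiv0$, the caps $\Sigma_+(r)$ and $\Sigma_-(r)$ are \emph{disjoint} open subsets of $S^{n-1}$, and the asserted inequality for the characteristic exponents of disjoint spherical domains is precisely the Friedland--Hayman inequality (equality only for a pair of complementary half-spheres, which corresponds to $\Phi\equiv\mathrm{const}$). Plugging this into the two displayed one-sided estimates gives $\frac{d}{dr}\log\Phi\ge0$ a.e., and integrating proves that $\Phi$ is nondecreasing on $(0,1)$.

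\smallskip
The main obstacle is the Friedland--Hayman inequality itself: it is the sharp and genuinely deep input, equivalent to an isoperimetric-type statement on $S^{n-1}$ and proved by spherical symmetrization, reducing to geodesic balls where the eigenvalue is expressed through Gegenbauer functions (for $n=2$ it degenerates to the elementary bound $\pi/\theta_1+\pi/\theta_2\ge2$ when $\theta_1+\theta_2\le2\pi$). The remaining points are technical bookkeeping: justifying the a.e. differentiation and the coarea formula for functions that are only $W^{1,2}_{\mathrm{loc}}$, justifying the integration by parts near the singularity of $|x|^{2-n}$ at the origin (via excision and a limit), verifying that for a.e. radius $\Sigma_\pm(r)$ is a nonempty open set on which the first eigenvalue and the Poincar\'e inequality are meaningful, and disposing of the degenerate cases noted at the outset.
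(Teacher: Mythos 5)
The paper does not include a proof of this theorem: it is stated as a known result with pointers to \cite{ACF} and \cite{CS}. Your sketch is a correct and faithful account of the classical argument that those references use. The Rellich--Pohozaev-type integration by parts with the kernel $|x|^{2-n}$, the Cauchy--Schwarz and spherical Poincar\'e steps, the optimization over $a/b$ producing $A'/A \ge 2\gamma(\lambda)/r$ with $\gamma(\gamma+n-2)=\lambda$, and the final appeal to the Friedland--Hayman inequality $\gamma(\lambda_+)+\gamma(\lambda_-)\ge 2$ for disjoint open subsets of $S^{n-1}$ all check out; in particular the minimizer $\sqrt{a/b}=\gamma/\sqrt{\lambda}$ does give the tight bound, and the boundary contribution at the origin you drop is indeed of a favorable sign. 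One small gloss: the origin term that you discard is $-\tfrac{n-2}{2}\,|S^{n-1}|\,u^2(0)\le 0$, so it is the \emph{subtracted} quantity that is nonnegative, consistent with the upper bound you want. The remaining items you defer (absolute continuity of $A_\pm$, excision near the singularity of $|x|^{2-n}$, a.e.\ well-definedness of the caps $\Sigma_\pm(r)$, and the degenerate cases $A_\pm\in\{0,\infty\}$) are precisely the technical points the standard references handle, and your treatment of the degenerate cases at the outset is adequate for the monotonicity statement.
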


\begin{theorem}[Equality in ACF monotonicity formula]\label{cas ACF}
 Let $u_{\pm}$ be as in Theorem \ref{ACF} and assume that $\Phi(r_1)=\Phi(r_2)$ for some $0<r_1<r_2<1$. Then either one of the following holds:
\begin{itemize}
\item[(i)] $u_+=0$ in $B_{r_2}$ or $u_-=0$ in $B_{r_2}$;
\item[(ii)] there exists a unit vector $e$ and constants $k_{\pm}>0$ such that
$$u_+(x)=k_+(x\cdot e)^+, \quad u_-(x)=k_-(x\cdot e)^- \quad \text{ in } B_{r_2}.$$
\end{itemize}
\end{theorem}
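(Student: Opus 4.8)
The plan is to revisit the chain of inequalities behind Theorem~\ref{ACF} and to determine when each link is an equality. Write $A_\pm(r)=\int_{B_r}|\nabla u_\pm|^2|x|^{2-n}\,dx$, so that $\Phi(r)=r^{-4}A_+(r)A_-(r)$. The proof of Theorem~\ref{ACF} rests on two facts. First, a spherical Rayleigh-quotient estimate: for a.e.\ $r\in(0,1)$ one has $A_\pm'(r)/A_\pm(r)\ge 2\beta_\pm(r)/r$, where $\beta_\pm(r)$ is the positive root of $\beta(\beta+n-2)=\lambda_\pm(r)$ and $\lambda_\pm(r)$ is the first Dirichlet eigenvalue of the Laplace--Beltrami operator on $\Omega_\pm(r):=\{\theta\in S^{n-1}:u_\pm(r\theta)>0\}$. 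Second, the Friedland--Hayman inequality: since $\Omega_+(r)$ and $\Omega_-(r)$ are disjoint, $\beta_+(r)+\beta_-(r)\ge 2$, with equality exactly when $\Omega_+(r)$ and $\Omega_-(r)$ are complementary open half-spheres. Combining these gives, for a.e.\ $r$, $(\log\Phi)'(r)=A_+'(r)/A_+(r)+A_-'(r)/A_-(r)-4/r\ge \tfrac{2}{r}\big(\beta_+(r)+\beta_-(r)-2\big)\ge 0$.

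Since $\Phi$ is nondecreasing and $\Phi(r_1)=\Phi(r_2)$, it is constant on $[r_1,r_2]$, equal to some $c\ge 0$. If $c=0$, then $A_+(r_2)A_-(r_2)=0$; assume $A_-(r_2)=0$ (the other case is symmetric). Then $\nabla u_-\equiv 0$ on $B_{r_2}$, so $u_-$ is a nonnegative constant there: if that constant is positive then $u_+\equiv 0$ in $B_{r_2}$ because $u_+u_-=0$, and if it is $0$ then $u_-\equiv 0$ in $B_{r_2}$; in both cases alternative (i) holds. So assume $c>0$. Then $A_\pm(r)>0$ for every $r\in[r_1,r_2]$, hence $\log\Phi$ is (locally) absolutely continuous and constant on $(r_1,r_2)$, and therefore $(\log\Phi)'(r)=0$ for a.e.\ $r$. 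For such $r$ the chain above collapses, forcing both $\beta_+(r)+\beta_-(r)=2$ and equality in $A_\pm'(r)/A_\pm(r)\ge 2\beta_\pm(r)/r$.

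Fix such an $r$. Since $A_\pm(r)>0$, $u_\pm$ is not identically zero on $B_r$; being nonnegative, subharmonic and continuous, it cannot vanish identically on $\partial B_r$ either (by the maximum principle), so $\Omega_\pm(r)$ is a nonempty open subset of $S^{n-1}$, and $\Omega_+(r)\cap\Omega_-(r)=\emptyset$. The equality case of Friedland--Hayman then produces a unit vector $e(r)$ with $\Omega_\pm(r)=\{\theta\in S^{n-1}:\pm\,\theta\cdot e(r)>0\}$ and $\beta_\pm(r)=1$. Tracing the equality back through the integration by parts and the Poincaré/Cauchy--Schwarz steps that yield $A_\pm'/A_\pm\ge 2\beta_\pm/r$ then forces $\Delta u_\pm=0$ in $\{u_\pm>0\}\cap B_r$ together with the separation of variables $u_\pm(x)=|x|\,g_\pm(x/|x|)$ on $B_r$, where $g_\pm$ is a first Dirichlet eigenfunction of $\Omega_\pm(r)$; as the first eigenfunction of a half-sphere $\{\theta\cdot e>0\}$ is a positive multiple of $\theta\mapsto\theta\cdot e$, this gives $u_\pm(x)=k_\pm(r)\,(x\cdot e(r))^\pm$ on $B_r$. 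Comparing these representations on $B_r$ and $B_{r'}$ for two generic radii $r_1<r<r'<r_2$ shows that $e(r)$ and $k_\pm(r)$ are independent of $r$; letting $r\uparrow r_2$ we obtain $u_+(x)=k_+(x\cdot e)^+$ and $u_-(x)=k_-(x\cdot e)^-$ in $B_{r_2}$. Finally $k_\pm\ge 0$ because $u_\pm\ge 0$, and $k_\pm\ne 0$ because otherwise $A_\pm\equiv 0$, contradicting $c>0$; hence $k_\pm>0$, which is alternative (ii).

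The substantive points are the two rigidity statements invoked above: the equality characterization in the Friedland--Hayman inequality, and the equality analysis of the scalar differential inequality for $A_\pm$, where one must combine simplicity of the first Dirichlet eigenvalue with the equality cases of Cauchy--Schwarz and of the spherical Poincaré inequality to extract exact degree-one homogeneity and the precise angular profile. Both are classical and may be quoted from \cite{ACF} and \cite{CS}; the rest --- propagating an almost-everywhere-in-$r$ statement to all of $B_{r_2}$ and tracking the constants --- is routine.
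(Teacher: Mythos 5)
The paper does not prove this theorem: the paragraph preceding Theorems~\ref{ACF} and~\ref{cas ACF} points the reader to \cite{ACF} and \cite{CS} for the proofs, and both statements are given without argument. There is therefore no in-paper proof to compare against, and the relevant question is only whether your sketch is a correct rendering of the standard argument from those references. It is: you reduce to $\Phi\equiv c$ on $[r_1,r_2]$, dispose of $c=0$ via $\nabla u_\mp\equiv 0$, and for $c>0$ correctly locate the two rigidity inputs, namely the equality case of Friedland--Hayman (complementary half-spheres, $\beta_\pm=1$) and the equality cases in the Cauchy--Schwarz/spherical-Poincar\'e chain that produces $A_\pm'/A_\pm\ge 2\beta_\pm/r$.

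One step is phrased a bit more strongly than what a single radius actually gives you. Equality at a fixed $r$ directly yields: (a) $u_\pm\Delta u_\pm=0$ in $B_r$, so $u_\pm$ is harmonic on $\{u_\pm>0\}\cap B_r$; (b) $u_\pm|_{\partial B_r}$ is a multiple of the first Dirichlet eigenfunction $(\theta\cdot e(r))^\pm$ of the half-sphere; and (c) $\partial_\nu u_\pm=\beta_\pm u_\pm/r$ on $\partial B_r$. That is boundary information plus harmonicity on an unknown positivity set, which by itself does not pin down $u_\pm$ throughout $B_r$. What closes the gap is that equality holds for a.e.\ $r\in(r_1,r_2)$, so by continuity of $u_\pm$ the trace $u_\pm(r\cdot)$ is a multiple of $(\theta\cdot e(r))^\pm$ for every $r$ in the interval; this determines the positivity set in the annulus, forces $e(r)$ constant, and then harmonicity plus the now-known zero set gives the global linear form, which extends into $B_{r_1}$ as well. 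This is exactly the route taken in \cite{CS}, so your appeal to those references covers it, and the rest of your argument (constancy of $e$ and $k_\pm$ by overlap of balls, $k_\pm>0$ from $c>0$) is fine; I flag it only because, as written, the sentence beginning ``Tracing the equality back\dots'' claims the full $B_r$ representation from a single $r$, which is a presentational shortcut rather than a logical one.
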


\begin{lem}\label{pm}
Let $u \in P_1(M)$ with the upper obstacle $\psi=\frac{a}{2}(x_1^+)^2$.
 Then for any unit vector $e$ such that $e\perp e_1$,
$$\Delta (\partial_e u)^{\pm} \ge 0 \quad \text{ in } B_1.$$
\end{lem}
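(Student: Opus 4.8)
The plan is to show that $w := \partial_e u$ is subharmonic on the open set $\{w > 0\}$ and also on $\{w < 0\}$, from which $\Delta w^{\pm} \ge 0$ in the distributional sense on all of $B_1$ follows by a standard argument (the measure $\Delta w^+$ is nonnegative away from $\{w=0\}$, and it cannot charge $\{w=0\}$ because $w^+$ together with $w^-$ touch there with matching zero boundary values, so no positive singular part can concentrate on that set — this is the usual reasoning used to feed the ACF formula). So the heart of the matter is the differential inequality on $\{w>0\}$ and $\{w<0\}$.

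First I would partition $B_1$ according to the phases of the equation: the non-coincidence set $\Omega(u) \cap \{u < \psi\}$, where $\Delta u = f$; the upper-contact set $\Omega(u) \cap \{u = \psi\}$, where $\Delta u = \Delta\psi = a$ (using that $\psi = \tfrac a2 (x_1^+)^2$ has $\Delta\psi = a\,\chi_{\{x_1>0\}}$ and that $\Omega(u)\cap\{u=\psi\}\subset\Omega(\psi)=\{x_1>0\}$); and the set $\{u=0\}\cap\{\nabla u = 0\}$, where $u\equiv 0$ locally and hence all derivatives vanish. Differentiating in a direction $e\perp e_1$: in the first phase $\Delta w = \partial_e f$, in the second phase $\Delta w = \partial_e(\Delta\psi) = 0$ since $\Delta\psi = a$ is constant on $\{x_1>0\}$ and $e$ is tangential to $\{x_1=0\}$, and in the third phase $\Delta w = 0$. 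The only phase that is not obviously harmless is the first one, where $\partial_e f$ need not have a sign. Here is where $e\perp e_1$ is again essential together with the structure of the free boundary: on $\{w>0\}$ one wants to argue that $\partial_e f$ does not spoil subharmonicity. In fact the cleaner route is to \emph{not} differentiate the source directly but to use the comparison/penalization idea: for tangential directions $e$, the function $\psi$ is itself independent of $e$ (note $\partial_e\psi = 0$ when $e\perp e_1$), so translations $u(x+te)$ still satisfy $u(\cdot+te)\le \psi$; this invariance is what lets one run the standard Caffarelli-type argument showing $w^{\pm}$ are subharmonic.

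Concretely, I would proceed as in the one-phase obstacle problem: fix a tangential direction $e$ and a small $t>0$ and compare $u$ with its translate. Because $\psi(x+te)=\psi(x)$, the translate $u(\cdot+te)$ is an admissible competitor for the same upper obstacle, and on the coincidence/degeneracy sets $u$ is pinned (either $u=\psi$ or $u\equiv 0$), so the difference quotient $\frac{u(x+te)-u(x)}{t}$ is a subsolution of the linearized operator up to an $O(t)$ error coming from the Lipschitz dependence of $f$ and $\Delta\psi$; then let $t\to 0^+$ to get that $w=\partial_e u$ satisfies $\Delta w \ge 0$ on $\{w>0\}$ and, replacing $e$ by $-e$, also $\Delta w \le 0$ on $\{w<0\}$, i.e.\ $\Delta(\partial_e u)^{\pm}\ge 0$ on the respective open sets. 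One then promotes this to a global distributional inequality on $B_1$ exactly as in Theorem~\ref{ACF}'s hypotheses: since $(\partial_e u)^+$ and $(\partial_e u)^-$ are continuous, nonnegative, have disjoint supports, and are subharmonic off the common zero set, the full Laplacians $\Delta(\partial_e u)^{\pm}$ are nonnegative measures on $B_1$.

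The main obstacle I anticipate is the handling of the source term $f\chi_{\{u<\psi\}}$: since $f$ is only Lipschitz, $\partial_e f$ is merely bounded and has no sign, so one cannot get $\Delta w\ge 0$ on the nose from differentiating the PDE — one genuinely needs the translation-comparison argument, where the point is that the \emph{bad} contribution $\partial_e f$ appears with the correct one-sided structure only after one also exploits that on the contact set the solution cannot be pushed past $\psi$. A secondary technical point is justifying that $\Delta w^{\pm}$ puts no mass on $\Gamma(u)\cup\Gamma^\psi(u)$ (and on the thin set $\{u=0,\nabla u=0\}$), which uses $|\Gamma(u)|=0$ from Lemma~\ref{Leb fb}, the corresponding statement for $\Gamma^\psi(u)$ from Remark~\ref{nonde for v}, and the $C^{1,1}$ bound (Theorem~\ref{rem opt}) guaranteeing $w$ is Lipschitz so that these free boundaries, being Lebesgue-null, carry no singular part of the distributional Laplacian of a bounded-gradient function.
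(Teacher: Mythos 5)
The paper's proof is much shorter than yours and uses a cleaner observation that you implicitly state but do not exploit. For $e \perp e_1$ and $\psi = \tfrac a2 (x_1^+)^2$, the open set $E := \{\partial_e u > 0\}$ is \emph{entirely} contained in $\Omega(u) \cap \{u < \psi\}$: on $\Lambda(u)$ we have $\nabla u = 0$, and on $\Omega(u) \cap \{u = \psi\}$ the constraint $u \le \psi$ (both being $C^1$) forces $\nabla u = \nabla \psi$, so $\partial_e u = \partial_e \psi = 0$ there. Thus one never touches the contact phase or the degenerate set, and your three-phase decomposition is superfluous, as is the whole paragraph on showing $\Delta w^{\pm}$ puts no mass on $\Gamma(u)\cup \Gamma^\psi(u)$ (invoking Lemma \ref{Leb fb} and Remark \ref{nonde for v}). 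The standard exercise ``$w\in C^0$, $w$ subharmonic on $\{w>0\}$ $\Rightarrow$ $\Delta w^+ \ge 0$ in $\mathcal D'(B_1)$'' already absorbs the contribution from $\{w=0\}$, with no need to separately rule out a singular part there.

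The more serious problem is your handling of the source term. You correctly observe that $\partial_e f$ has no sign and then assert that the translation-comparison argument fixes this, but you never say how, and I do not believe it does. On the open set $\Omega(u)\cap\{u<\psi\}$ the function $w := \partial_e u$ satisfies $\Delta w = \partial_e f$ distributionally; the difference quotients $(u(\cdot+te)-u)/t$ satisfy, in the $t\to 0^+$ limit, exactly this same identity — the $O(t)$ errors you mention vanish \emph{together with} the linearization, not before it, so $\partial_e f$ survives as the inhomogeneity. Hence for a generic Lipschitz $f$ with $\partial_e f \not\ge 0$ the function $(\partial_e u)^+$ is genuinely not subharmonic, and no comparison trick changes that. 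The paper's proof sidesteps the issue entirely: it writes $\Delta u = 1$ on $E$, i.e.\ it tacitly uses $f\equiv 1$. This is harmless in context, because Lemma \ref{pm} is only ever applied (in Lemma \ref{2dim}) to functions in $P_\infty(M)$ and their rescalings, where the source is the constant $1$ by Definition \ref{glo sol}. With $f$ constant, $\partial_e u$ is \emph{harmonic} — not merely subharmonic — on $E$, and the conclusion is immediate. Your proof, as written, would need either the hypothesis $\partial_e f \ge 0$ on $\{\partial_e u>0\}$ (and $\le 0$ on $\{\partial_e u<0\}$) or the reduction to constant $f$; without one of these the key subharmonicity step fails.
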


\begin{proof}
Let $e$ be a unit vector such that $e\perp e_1$ and $E:=\{\partial_e u >0 \}$.
Since $\partial_e \psi \equiv 0$, we know that $E \subset \Omega(u) \cap \{u< \psi\}$ ($u\le \psi$ implies $\{u=\psi\}=\left\{\{u=\psi\} \cap \{ \nabla u=\nabla \psi \} \right\} $) and $\La u =1 \text{ on } E$. Consequently, we have $\Delta (\partial_e u)=0 \text{ on } E$ and 
$$\Delta (\partial_e u)^{+} \ge 0 \quad \text{ in } B_1$$
This is left to the reader as an exercise.

We have the same inequality for $(\partial_e u)^{-}$, by using the direction $-e$ instead of $e$.
\end{proof}

\begin{lem}\label{2dim}
Let  $u \in P_\infty(M)$ with the upper obstacle  
$$\psi(x)=\frac{a}{2}(x_1^+)^2 \quad \text{ in } \re^n.$$
Assume that there exists $\epsilon_0>0$ such that
$$ \delta_r(u) \ge \epsilon_0 \quad \forall r>0.$$
Then we have $|Int\Lambda(u)|\neq 0$ and $u$ is two-dimensional, i.e.
$$u(x)=w(x_1,x_2) \quad \forall x\in \re^n,$$
with $\partial_2 w\ge 0$, in an appropriate system of coordinates.
\end{lem}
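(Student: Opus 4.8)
The plan is to combine the ACF monotonicity formula with the thickness assumption to force one-dimensional behavior in the directions orthogonal to $e_1$, and then integrate. First I would observe, via Lemma \ref{pm}, that for every unit vector $e\perp e_1$ the functions $(\partial_e u)^+$ and $(\partial_e u)^-$ are nonnegative, subharmonic, and have disjoint supports, so the ACF functional $\Phi(r,(\partial_e u)^+,(\partial_e u)^-)$ is nondecreasing. The key point is to show this functional is actually constant (equivalently, that $\Phi(r)\to 0$ as $r\to\infty$, or that $\Phi$ is bounded), so that Theorem \ref{cas ACF} applies; since $u$ is only quadratically growing, $\partial_e u$ grows at most linearly, and a standard computation (using $\|D^2u\|_\infty\le M$ together with the fact that $\partial_e u$ is a solution of the type controlled above) gives $\Phi(r)\le C$ uniformly, hence $\Phi$ has a finite limit and must be constant on $(0,\infty)$ by a scaling argument. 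Then the equality case tells us that in each such direction $e$, either $\partial_e u$ does not change sign, or $\partial_e u$ is (up to constants) a one-dimensional ramp $k_+(x\cdot\nu)^+ - k_-(x\cdot\nu)^-$.

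Next I would use the thickness hypothesis $\delta_r(u)\ge\epsilon_0$ to rule out the degenerate possibilities and to pin down the geometry. If $\partial_e u$ were a genuine two-sided ramp for some $e\perp e_1$, then $\Delta\partial_e u$ would be a positive multiple of a surface measure on a hyperplane, which is incompatible with $u\in P_\infty(M)$ (where $\Delta u$ takes only the values $1$ and $a$ on an open set and the coincidence set $\Lambda(u)$ has positive thickness); so in fact $\partial_e u$ must be single-signed for every $e\perp e_1$. Combined with Lemma \ref{pm} applied to both $e$ and $-e$, single-signedness of $\partial_e u$ for all $e$ in the $(n-1)$-dimensional subspace $e_1^\perp$ is exactly the statement that $u$ is monotone in every such direction, and monotonicity in all directions of a linear subspace (a convexity-type argument, or simply: $\partial_e u\ge 0$ and $\partial_{-e}u = -\partial_e u\ge0$ on overlapping regions forces $\partial_e u\equiv0$ unless its sign is locked) forces $u$ to depend on at most two of the variables. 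After an orthogonal change of coordinates fixing $e_1$, this gives $u(x)=w(x_1,x_2)$.

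To finish, I would establish the remaining two claims: $\partial_2 w\ge0$ in the chosen coordinates, and $|\mathrm{Int}\,\Lambda(u)|\neq0$. The sign of $\partial_2 w$ is just a matter of choosing the orientation of the $x_2$-axis to agree with the single sign of the corresponding directional derivative obtained above (if $u$ is genuinely two-dimensional, there is a distinguished direction $e_2\perp e_1$ in which $u$ varies, and we orient it so $\partial_{e_2}u\ge0$). For the positive measure of the interior of the coincidence set: if $|\mathrm{Int}\,\Lambda(u)|=0$ then $\Lambda(u)$ would be nowhere dense, but $\delta_r(u)\ge\epsilon_0$ for all $r$ means $\Lambda(u)$ contains points at mutual distance comparable to $r$ between two parallel supporting hyperplanes at every scale; together with nondegeneracy (Lemma \ref{nond}) and the fact that $\Lambda(u)$ is the set $\{u=0,\nabla u=0\}$, a measure-zero $\Lambda(u)$ would make $u$ real-analytic-like and harmonic off a null set, contradicting $\Delta u = 1$ on a nonempty open part of $\Omega(u)$ adjacent to $\Lambda(u)$; more directly, a global solution with null coincidence set and the given growth must be a polynomial, whose coincidence set cannot have positive thickness at all scales.

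I expect the main obstacle to be the constancy of the ACF functional — i.e., getting the uniform upper bound on $\Phi(r,(\partial_e u)^+,(\partial_e u)^-)$ as $r\to\infty$ and justifying that a bounded nondecreasing ACF functional which is scale-invariant under the blow-down must in fact be constant. This requires carefully exploiting that $\partial_e u$ has at most linear growth with constant controlled by $M$, and handling the fact that $\partial_e u$ solves an equation with a bounded right-hand side rather than being harmonic, so one needs the version of the ACF argument or a Liouville-type statement that tolerates the inhomogeneity; the thickness assumption enters precisely to exclude the one-dimensional ramp alternative in the equality case and thereby upgrade ``one- or two-dimensional'' behavior into the clean statement.
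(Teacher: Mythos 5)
Your proposal follows the same overall skeleton as the paper (Lemma~\ref{pm} to make ACF applicable in directions $e\perp e_1$, the equality case of Theorem~\ref{cas ACF}, then the thickness assumption to rule out the ramp alternative, and finally a reduction to two variables via Lemma~\ref{lem 2dim}), but there are two genuine gaps.

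First, the step you yourself flag is indeed wrong as stated: a nondecreasing, bounded ACF functional $\phi_e(r,u)$ need not be constant, so boundedness plus scale invariance does not yield constancy for $u$ itself. The correct route, which the paper takes, passes through the shrink-down $u_\infty$. Since $\phi_e(r,u_\lambda)=\phi_e(r\lambda,u)\to\phi_e(\infty,u)$ as $\lambda\to\infty$, one gets that $r\mapsto\phi_e(r,u_\infty)$ is \emph{constant} (equal to the limit $\phi_e(\infty,u)$). One then applies Theorem~\ref{cas ACF} to $u_\infty$, rules out the ramp alternative for $u_\infty$ (using $|\mathrm{Int}\,\Lambda(u_\infty)|\neq 0$, or directly the inherited thickness of $\Lambda(u_\infty)$, since the ramp would force $\Lambda(u_\infty)$ into a hyperplane), obtains $\phi_e(\infty,u)=0$, and only then concludes $\phi_e(r,u)\equiv 0$ for all $r$ by monotonicity and nonnegativity; the equality case can now be applied to $u$ itself. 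Without the shrink-down step, the chain ``bounded $\Rightarrow$ constant $\Rightarrow$ equality case for $u$'' is broken.

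Second, your argument for $|\mathrm{Int}\,\Lambda(u)|\neq 0$ is not correct as sketched. If $|\Lambda(u)|=0$, then $u$ solves $\Delta u=\chi_{\{u<\psi\}}+a\chi_{\{u=\psi\}}$ a.e., which is \emph{not} a no-sign obstacle problem with a single constant right-hand side, so the polynomial/half-space dichotomy you invoke does not apply to $u$ directly. The paper's key device is to introduce $\tilde\psi=\tfrac{a}{2}x_1^2$ and $\tilde v=\tilde\psi-u$, and to show, using $a>1$ and $\Omega(u)\cap\{u=\psi\}\subset\{x_1\ge 0\}$, that $|\{x_1<0\}\cap\{u=\tilde\psi\}\cap\{\nabla u=\nabla\tilde\psi\}|=0$, so that $\tilde v$ does satisfy the genuine no-sign obstacle equation $\Delta\tilde v=(a-1)\chi_{\Omega(\tilde v)}$ a.e.\ in $\R^n$. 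Only for $\tilde v$ (and its blowup) can one apply the classification ``polynomial or half-space,'' and then translate the contradiction back to the thickness assumption on $u$. Your sketch skips this transformation, which is essential.
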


\begin{proof}

Suppose $|Int \Lambda(u)|=0$. Then, by Lemma \ref{Leb fb}, we have $|\partial \Lambda(u)|=0$ and $|\Lambda (u)|=0$. Thus $u$ is a solution of
$$\Delta u= \chi_{\{u< \psi\}}+a\chi_{\{u=\psi\}} \quad \text{ a.e. in } \re^n.$$
Define $\tilde \psi :=\frac{a}{2}(x_1)^2$. Then $\tilde v:=\tilde \psi-u$ is a solution of
$$\Delta \tilde v= (a-1)\chi_{\{u<\psi\}} \quad \text{ a.e. in } \re^n.$$
Since $\Omega(u) \cap \{u=\psi\} \subset \{x_1\ge 0\}$, we know that $\Delta u \le 1$ a.e. in $\{x_1<0 \}$. On the other hand, $\Delta u =a $ a.e. in $\{x_1<0\} \cap \{u=\tilde \psi\}\cap \{ \nabla u=\nabla \tilde \psi\}$.
%$u$ is analytic in $\{x_1<0\} \setminus \Gamma(u)$.  
 Therefore, we know that $|\{x_1<0\} \cap \{u=\tilde \psi\}\cap \{\nabla u=\nabla \tilde \psi\}|= 0 $. By the definition of $\tilde \psi$ and $\psi$, we obtain $\{u=\tilde \psi\}\cap \{\nabla u=\nabla \tilde \psi\}=\{u= \psi\}\cap \{\nabla u=\nabla \psi\}= \{u=\psi\}$ a.e. in $\re^n$ ($u \le \psi$ implies the last equality). Therefore $\tilde v$ is a solution of
$$\Delta \tilde v= (a-1)\chi_{\Omega(\tilde v)} \quad \text{ a.e. in } \re^n,$$
%Suppose $Int (\Lambda(\tilde v))=\emptyset$. Then $\tilde v$ is a polynomial and we have a contradiction to the fact that $\delta_r(u)> \epsilon_0$ for all $r>0$. 
where $\Omega(\tilde v):= \re^n \setminus \left( \{\tilde v=0\} \cap \{\nabla \tilde v=0\}\right)= \re^n \setminus \left(\{u=\tilde \psi\}\cap \{\nabla u=\nabla \tilde \psi\} \right).$ 

By the definition of $\tilde v$, we know that $0\in \Lambda(\tilde v).$ Suppose $0\in int \Lambda(\tilde v)$. Then there is a ball $B_r$ such that $\tilde v \equiv 0$ and $u \equiv \tilde \psi$ in $\re^n$. Thus we have a contradiction to $\delta_r(u)> \epsilon_0$ for all $r>0$.\\
Suppose $0\in \Gamma(\tilde v)$. Let $u_0, \tilde v_0$ be blowup functions of $u$ and $\tilde v$, respectively, such that $u_0=\tilde \psi-\tilde v_0.$ Then
 $\tilde v_0$ is a solution of
$$\Delta \tilde v_0= (a-1)\chi_{\Omega(\tilde v_0)} \quad \text{  a.e. in } \re^n,$$ and by Theorem 3.22 of \cite{PSU},  we know that
 $\tilde v_0$ is a polynomial or a half-space solution. 
In the both cases, we have a contradiction to $\delta_r(u_0)> \epsilon_0$ for all $r>0$.
Thus, we obtain $$|Int \Lambda(u)|\neq 0.$$ 

Let $u_\infty$ be a shrink-down of $u$ at $0$, then $u_\infty\in P_\infty(M)$ with the upper obstacle $\psi=\frac{a}{2}(x_1^+)^2$ and the thickness assumption,
$$\min\left\{ \delta_r(u_\infty), \delta_r(\psi)\right\}> \epsilon_0 \quad \forall r>0.$$
Hence we also have
$$|Int \Lambda(u_\infty)| \neq 0.$$

For $r>0$ and a unit vector $e$, we define
$$\phi_e(r,u):=\Phi(r,(\partial_e u)^+,(\partial_e u)^-).$$
By $W^{2,p}$ convergence $u_{r_j} \to u_\infty$, we have
$$\phi_e(r,u_\infty)=\lim_{j\to \infty} \phi_e(r,u_{r_j}).$$
Additionally, we obtain the rescaling property,
$$\phi_e(r,u_{r_j})=\phi_e(rr_j,u).$$

By Lemma \ref{pm}, we know that $(\partial_e u)^{\pm}$ and $(\partial_e u_\infty)^{\pm}$ satisfy the assumptions in ACF monotonicity formula (Theorem \ref{ACF}), for any unit vector $e$ such that $e\perp e_1$. Thus we know that the limit $\phi_e(\infty,u)$ exists and 
$$\phi_e(r,u_\infty)=\lim_{j\to \infty}\phi_e(rr_j,u)=\phi_e(\infty,u), $$
for all $r>0$ and $e \perp e_1$, i.e., $\phi_e(r,u_\infty)$ is constant for all $r>0$ and $e\perp e_1$. By Theorem \ref{cas ACF},  either one of the following holds for $e\perp e_1$:
\begin{itemize}
 \item[(i)] $(\partial_eu_\infty)^+\equiv 0$ or $(\partial_eu_\infty)^-\equiv 0$ in $\re^n$;
 \item[(ii)] there exists a unit vector $w=w(e)$ and constants $k_{\pm}=k_{\pm}(e)>0$ such that
$$(\partial_eu_\infty)^+=k_+(x\cdot w)^+, \quad (\partial_eu_\infty)^-=k_-(x\cdot w)^- \quad \forall  x \in \re^n.$$
\end{itemize}
Since $|Int \Lambda(u_\infty)| \neq 0,$
%the thickness assumption, $\delta_r(u,\psi)\ge\epsilon>0$ for all $r>0$ implies $\delta_1(u_\infty,\psi_\infty)\ge\epsilon>0$, 
we know that
 $(ii)$ does not hold for any direction $e \perp e_1$, i.e., we know that $(i)$ holds for any direction $e \perp e_1$.
%By Lemma \ref{lem 2dim}, %there exist a function $\phi\in C^1(\re^2)$ and a direction 
%$$u_\infty(x)=\phi(x_1,x\cdot \tilde e), \quad x\in \re^n$$
%where $\tilde e \perp e_1$ and $\phi$ is a monotone function with the second variable and , i.e., $\partial_2 \phi$ has a sign.
%Therefore, 
%$$\partial_e u_\infty(x)=\tilde e\cdot e \partial_2 \phi$$
%has a sign, where $e\perp e_1$.
Consequently, we have that
$$0 \le \phi_e(r,u) \le \phi_e(\infty,u)=\phi_e(r, u_\infty)=0,$$
for any $r>0$ and $e\perp e_1$. 
Then again, by $|Int \Lambda (u)| \neq 0$ and Theorem \ref{cas ACF}, we know that $\partial_e u$ has a sign for all $e\perp e_1$, i.e., 
$$\partial_eu\ge 0\quad \text{ or } \quad \partial_eu \le 0 \quad \text{ in } \re^n \text{ for any } e\perp e_1.$$
By Lemma \ref{lem 2dim}, in an appropriate system of coordinates
$$u(x)=w(x_1,x_2), \quad x\in \re^n,$$
with $\partial_2 w\ge 0$.
\end{proof}

\begin{lem}\label{lem 2dim}
If $u\in C^1(\re^n)$ and if $\partial_e u$ does not change sign in $\re^n$, where $e\perp e_1$, then there exist a function $w\in C^1(\re^2)$ and a direction $\tilde e\perp e_1$ such that
$$u(x)=w(x_1,x\cdot \tilde e), \quad x\in \re^n$$
where $w$ is a monotone function with the second variable.
\end{lem}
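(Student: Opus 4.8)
The plan is to reduce the statement to a sequence of elementary observations about a $C^1$ function that is monotone in one "angular" family of directions. First I would fix a unit direction $e_0\perp e_1$ for which $\partial_{e_0}u$ does not change sign; without loss of generality assume $\partial_{e_0}u\ge 0$. The idea is to identify \emph{all} directions in $e_1^\perp$ along which $u$ is nondecreasing and show they form a half-space (a closed half of the unit sphere in $e_1^\perp$), so that $u$ is constant along the complementary hyperplane through the "pole" of that half-space. The hypothesis "$\partial_e u$ does not change sign for every $e\perp e_1$" is what forces this dichotomy: for each such $e$ either $\partial_e u\ge 0$ everywhere or $\partial_e u\le 0$ everywhere, and since $\partial_{-e}u=-\partial_e u$, flipping signs we may speak of the set $S\subset \Ss^{n-2}=\{e\in\Ss^{n-1}:e\perp e_1\}$ of directions with $\partial_e u\ge 0$; note $S\cup(-S)=\Ss^{n-2}$.

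Next I would check that $S$ is closed (continuity of $\nabla u$) and convex as a subset of $e_1^\perp$ in the sense that it is closed under nonnegative linear combinations: if $e,e'\in S$ then $\partial_{te+(1-t)e'}u = t\,\partial_e u+(1-t)\partial_{e'}u\ge 0$, so the normalization of $te+(1-t)e'$ lies in $S$. Hence $S$ is (the trace on $\Ss^{n-2}$ of) a closed convex cone $K$ in the $(n-1)$-dimensional space $e_1^\perp$ with $K\cup(-K)=e_1^\perp$. An elementary argument shows that a closed convex cone whose union with its negative is the whole space must be a half-space: if $K$ were contained in a proper sub-halfspace there would be directions $e$ with neither $e$ nor $-e$ in $K$, contradicting $K\cup(-K)=e_1^\perp$; and $K$ cannot properly contain a half-space without being all of $e_1^\perp$. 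So either $K=e_1^\perp$, in which case $\partial_e u\ge 0$ \emph{and} $\partial_e u\le 0$ for every $e\perp e_1$, so $u$ depends only on $x_1$ and we may take $\tilde e$ any direction $\perp e_1$ with $w(x_1,s)=u(x_1 e_1)$ trivially monotone (constant) in $s$; or $K=\{y\in e_1^\perp:\ y\cdot\tilde e\ge 0\}$ for a unique unit vector $\tilde e\perp e_1$, which is the generic case.

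In the generic case, the boundary hyperplane of $K$ inside $e_1^\perp$ is $\tilde e^{\perp}\cap e_1^\perp$, and every $e$ in that hyperplane satisfies $e\in S$ and $-e\in S$, hence $\partial_e u\equiv 0$ for all $e\in \tilde e^\perp\cap e_1^\perp$. Since these directions together with $e_1$ span the hyperplane $\tilde e^\perp$, and $u\in C^1$, this means $u$ is constant along every translate of $\tilde e^\perp$; equivalently $u(x)$ depends only on the two coordinates $x_1=x\cdot e_1$ and $s=x\cdot\tilde e$. Writing $u(x)=w(x_1,x\cdot\tilde e)$ defines $w$ on $\R^2$, and $w\in C^1(\R^2)$ because $\partial_1 w=\partial_{e_1}u$ and $\partial_s w=\partial_{\tilde e}u$ are restrictions of continuous functions (formally, pick a point in each fiber and pull back). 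Finally $w$ is monotone (nondecreasing) in its second variable since $\tilde e\in K=S$, i.e. $\partial_{\tilde e}u\ge 0$, so $\partial_s w\ge 0$.

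The only genuinely delicate point is the geometric lemma that a closed convex cone $K$ in a finite-dimensional vector space with $K\cup(-K)$ equal to the whole space is a half-space (possibly the whole space); everything else is bookkeeping with the chain rule and continuity. I would prove this by induction on dimension, or directly: the set $K\cap(-K)$ is a linear subspace $L$ (it is a cone closed under negation, and convexity makes it a subspace), and passing to the quotient $e_1^\perp/L$ one reduces to the case $K\cap(-K)=\{0\}$, a "pointed" cone; but a pointed closed convex cone with $K\cup(-K)$ the whole space can only be a half-line if the ambient dimension is $1$, and for dimension $\ge 2$ pointedness is incompatible with $K\cup(-K)$ covering the space, forcing $\dim L\ge \dim(e_1^\perp)-1$, i.e. $K$ is a half-space or all of $e_1^\perp$. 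This is the step I would write most carefully; the rest I would present as routine.
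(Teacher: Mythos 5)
The paper dismisses this lemma with ``The obvious proof is left to the reader,'' so there is no written proof to compare against. Your argument is a correct and reasonable way to supply the missing proof: reading the hypothesis (as the paper's usage requires) as ``$\partial_e u$ does not change sign for \emph{every} $e\perp e_1$,'' you form $S=\{e\in e_1^\perp\cap\partial B_1:\partial_e u\ge 0\ \text{in }\R^n\}$, observe $S\cup(-S)$ is the whole sphere in $e_1^\perp$, show the cone over $S$ is closed and convex, and invoke the elementary fact that a closed convex cone $K$ in a finite-dimensional space with $K\cup(-K)$ equal to the whole space is either the whole space or a closed half-space. That cone fact is indeed the only nontrivial step, and your reduction via $L=K\cap(-K)$ (a subspace) followed by the observation that a pointed closed convex cone cannot cover a plane together with its negation is the right way to nail it. One small slip in the final paragraph: from $\partial_e u\equiv 0$ for all $e\in\tilde e^\perp\cap e_1^\perp$ you should conclude that $u$ is constant along translates of the $(n-2)$-dimensional subspace $\tilde e^\perp\cap e_1^\perp$, \emph{not} along translates of the hyperplane $\tilde e^\perp$ (you never know $\partial_{e_1}u\equiv 0$; if $u$ were constant on translates of $\tilde e^\perp$ it would depend on $x\cdot\tilde e$ alone). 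The ``equivalently'' clause you write next, that $u$ depends only on $x_1$ and $x\cdot\tilde e$, is the correct statement and does follow from constancy on translates of $\tilde e^\perp\cap e_1^\perp$, so the conclusion is fine; just repair that one sentence. With that fixed, your proof is complete and more detailed than what the paper provides.
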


\begin{proof}
The obvious proof is left to the reader.
\end{proof}

\begin{prop}\label{pos glo}
Let  $u \in P_\infty(M)$ with the upper obstacle  
$$\psi(x)=\frac{a}{2}(x_1^+)^2 \quad \text{ in } \re^n.$$
Assume that there exists $\epsilon_0>0$ such that
$$ \delta_r(u) \ge \epsilon_0 \quad \forall r>0.$$
Then $0\le u$ in $\re^2$ and $u$ is a solution of 
\begin{equation}\label{2d po}
\Delta u = \chi_{\{0<u<\psi\}} + a \chi_{\{0<u=\psi\}}, \quad 0\le u \le \psi \quad \text{  a.e. in } \re^2.
\end{equation}
\end{prop}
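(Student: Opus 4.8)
The plan is to reduce everything to the two-dimensional picture supplied by Lemma \ref{2dim} and then run an ACF/Alt--Caffarelli--Friedman–type argument in the $x_1$-direction, combined with non-degeneracy, to rule out negativity of $u$. By Lemma \ref{2dim} we already know $|Int\,\Lambda(u)|\neq 0$ and, in suitable coordinates, $u(x)=w(x_1,x_2)$ with $\partial_2 w\ge 0$; so it suffices to prove $w\ge 0$ on $\re^2$, since the PDE in \eqref{2d po} is then just \eqref{main eq} rewritten under $\Omega(u)=\{u\neq 0\}\cup\{\nabla u\neq 0\}$ together with $\{u=\psi\}\subset\{x_1\ge 0\}$, noting that on $\{u=\psi\}$ one has $\Delta\psi=a$ (from $\psi=\frac a2(x_1^+)^2$, away from $x_1=0$; the set $\{x_1=0\}$ is null). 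The only genuine content is therefore the sign of $w$.

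First I would record the structural facts about $w$. Since $\partial_1\psi=a x_1^+$ and on the upper coincidence set $\{u=\psi\}$ we have $\partial_1 u=\partial_1\psi=a x_1^+$, while $\{u=\psi\}\subset\{x_1\ge 0\}$, the derivative $\partial_1 u$ is harmonic on $\{u<\psi\}\cap\Omega(u)$ and its positive and negative parts $(\partial_1 u)^{\pm}$ should again be subharmonic on all of $\re^2$ — this needs the same one-line argument as Lemma \ref{pm}, but now one must check what happens across $\{u=\psi\}$ and across $\Lambda(u)$; the key point is that $\Delta u=1$ on $\{u<\psi\}\cap\Omega(u)$ and $\Delta\psi=a>1$ on $\Omega(\psi)$, so $\partial_1 u-\partial_1\psi=\partial_1 u - ax_1^+$ picks up a favourable-signed measure where the two regions meet. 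I would then apply the ACF monotonicity formula (Theorem \ref{ACF}) to the pair $(\partial_1 u)^{\pm}$, use the shrink-down $u_\infty$ together with the rescaling identity $\phi_{e_1}(r,u_{r_j})=\phi_{e_1}(rr_j,u)$ exactly as in the proof of Lemma \ref{2dim}, and conclude that $\phi_{e_1}(\cdot,u_\infty)$ is constant, hence by Theorem \ref{cas ACF} one of the two alternatives holds for $u_\infty$.

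The crux is to exclude the half-plane alternative (ii) for $\partial_1 u_\infty$ using $|Int\,\Lambda(u_\infty)|\neq 0$ (which, as in Lemma \ref{2dim}, is inherited by the shrink-down), and then to translate ``$\partial_1 u_\infty$ has a sign'' back to a sign of $u$ itself. If $(\partial_1 u)^-\equiv 0$, i.e.\ $\partial_1 u\ge 0$, then $w(x_1,x_2)$ is nondecreasing in $x_1$; combined with $\partial_2 w\ge0$, $w$ is monotone in both variables, and I claim $\inf w=0$. Here non-degeneracy (Lemma \ref{nond}) does the work: if $w<0$ somewhere then on that sublevel region $u$ would have to be strictly negative on a full neighbourhood, forcing $\Delta u\ge c>0$ there by \eqref{eq non}, and the quadratic-comparison argument of Lemma \ref{nond} pushes $\sup_{\partial B_r} u\to+\infty$, contradicting $u\le\psi=\frac a2(x_1^+)^2$ at the appropriate points (on $\{x_1<0\}$, $\psi=0$, so $u\le 0$ there is \emph{not} automatic, but $u$ cannot grow while staying $\le 0$, and the monotonicity in $x_1$ then propagates the contradiction). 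The symmetric case $\partial_1 u\le0$ is handled by the reflection $x_1\mapsto -x_1$ only after noting that $\psi$ is \emph{not} symmetric — so in fact one expects only one of the two signs to be compatible with $u\le\psi$ and $\Delta\psi=a$ on $\{x_1>0\}$, and ruling out the wrong sign is where I'd expect to spend the most effort: the inequality $\Delta u\le 1$ a.e.\ on $\{x_1<0\}$ (since $\{u=\psi\}\subset\{x_1\ge0\}$) together with $\Delta u\ge c$ on $\Omega(u)$ and the one-dimensional-in-$x_1$ monotonicity should pin down $u\ge0$. Once $u\ge0$ on $\re^2$, the representation \eqref{2d po} follows immediately by rewriting $\Omega(u)=\{u>0\}$ (using $u\ge0$ and non-degeneracy to identify $\{u=0\}\cap\{\nabla u=0\}$ with $\{u=0\}$) and $\{u=\psi\}\subset\{x_1>0\}$ up to a null set, where $\Delta\psi=a$.
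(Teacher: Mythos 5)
Your proposal diverges from the paper's proof at the crucial step, and the route you sketch has a genuine gap.

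The paper does \emph{not} run an ACF argument in the $e_1$ direction. After Lemma~\ref{2dim} gives $u=w(x_1,x_2)$ with $\partial_2 w\ge 0$ and $|Int\,\Lambda(u)|\neq 0$, the paper picks a ball $B_\delta(x^0)\subset\Lambda(u)$, observes that monotonicity in $x_2$ forces $u\le 0$ (hence, by subharmonicity and the strong maximum principle, $u\equiv 0$) on the entire downward strip $K(x^0,\delta)$, and uses this to control the one-variable limit $\hat u(x_1):=\lim_{x_2\to-\infty}u(x_1,x_2)$, which exists by monotonicity and is finite because $|u|\le\frac M2|x_1-x^0_1|^2$ below $K(x^0,\delta)$. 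The function $\hat u$ solves the one-dimensional double obstacle problem with $\psi=\frac a2(x_1^+)^2$, and non-degeneracy plus the maximum principle on a suitable interval show $\hat u\ge 0$. Then $u(x_1,x_2)\ge\hat u(x_1)\ge 0$. The only tool beyond Lemma~\ref{2dim} is the monotonicity in $x_2$; no second ACF application is needed.

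The gap in your plan is the claim that $(\partial_1 u)^\pm$ are subharmonic, which you flag as ``needs the same one-line argument as Lemma~\ref{pm}''. It does not carry over. Lemma~\ref{pm} hinges on $\partial_e\psi\equiv 0$ for $e\perp e_1$, which forces $\{\partial_e u>0\}\subset\Omega(u)\cap\{u<\psi\}$, where $\Delta u$ is constant. For $e=e_1$ one has $\partial_1\psi=ax_1^+$, so $\{\partial_1 u>0\}$ will generally meet $\{u=\psi\}$, and $\Delta(\partial_1 u)=\partial_1(\Delta u)$ acquires singular contributions supported on $\partial\{u=\psi\}$ (and on $\Gamma(u)$) whose sign is governed by the $x_1$-component of the normal and is not controlled a priori. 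Consequently $\phi_{e_1}$ need not be monotone, and Theorem~\ref{cas ACF} cannot be invoked. Even if one granted a sign for $\partial_1 u$, your deduction of $u\ge 0$ from two-directional monotonicity and non-degeneracy is only sketched, and you yourself note that ruling out the ``wrong'' sign against the asymmetric obstacle $\psi$ is the hard part — precisely the work the paper's $\hat u$-construction is designed to avoid. I would replace the $e_1$-direction ACF step with the paper's descent to the one-dimensional limit $\hat u$.
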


%\begin{remark}\label{rem gen}
%We notice that, in the proof of Proposition \ref{cla}, we show that $u$ is a half-space solution or $u=\frac{a}{2}(x^+_1)^2-\frac{a-1}{2}\left((x_1-\alpha)^+\right)^2,$, where $\psi=\frac{a}{2}(x_1^+)^2$ without using the condition $u<\psi$, where $\psi$ is the upper obstacle for $u$. Carefully observe here that we can not deduce that $u=\frac{1}{2}(x_1^+)^2$ without using the condition $u<\psi$.
%\end{remark}

\begin{proof}%[Proof of Proposition \ref{cla}]
By Lemma \ref{2dim}, we know $u$ is a 2-dimensional function and $|Int\Lambda(u)|\neq 0$ and in an appropriate system of coordinates
$$\partial_2 u(x)\ge 0 \quad \forall x\in \re^2.$$
Thus we know that there is a ball $B_\delta(x_0) \subset \Lambda(u)$ and $u\le 0$ in
$$K(x^0,\delta)=\{(x_1, x_2-m) | (x_1,x_2)\in B_\delta(x^0), m \ge0\}.$$
Since $u$ is subharmonic, by the strong maximum principle, we obtain
$$u\equiv 0 \quad \text{ in } K(x^0, \delta).$$

By the assumption, $\partial_2 u \ge 0$ in $\re^2$, we know that the limit, $\lim_{x_2 \to -\infty} u(x_1,x_2)$ exists, for all $x_1\in \re^1$. Then we define a 1-dimensional function 
$$\hat u(x_1):=\lim_{x_2 \to -\infty} u(x_1,x_2).$$
Since $K(x^0,\delta) \subset \Lambda(u)$, we obtain
$$|u(x_1,x_2)|\le \frac{M}{2}|x_1-x^0_1|^2,$$
where $x_2 \le x^0_2$,
and therefore
$$|\hat u(x_1)|\le \frac{M}{2}|x_1-x^0_1|^2,$$
and $\hat u(x_1)$ is finite for any $x_1\in \re^1$.

 By the definition of $\hat u$ and the fact that $u(x_1,x_2-t)$ is a solution of \eqref{2d po} for all $t>0$, we know that $\hat u$ is a limit of the solutions of \eqref{2d po} and $\hat u$ is a solution of the obstacle problem with upper obstacle $\psi(x_1)=\frac{a}{2}(x_1^+)^2$ in $\re^1$. 

By the definition of $\hat u$ and $K(x^0, \delta) \subset \Lambda(u)$, we know $B'_\delta(x^0_1) \subset \Lambda(\hat u).$ Suppose that the connected component of $\Lambda(\hat u)$ containing $B'_\delta(x^0_1)$ is a closed interval, $[\alpha, \beta]\subset \re^1$ (call it $\tilde \Lambda(\hat u)$). By the non-degeneracy, we know that there are points $\alpha_0$ and $\beta_0$ such that $\alpha_0< \alpha< \beta< \beta_0$ and $\hat u(x)>0$ for all $x\in (\alpha_0,\alpha) \cup (\beta, \beta_0).$ Thus, if there is a point $z$ such that $\hat u(z)<0$, then there is an open interval $I$ such that $\hat u>0$ on $I$ and $\hat u=0$ at the ends points of $I$. By the maximum principle, however, $\hat u \le 0$ on $I$. Thus, we arrive at a contradiction. 
In the case that $\tilde \Lambda( \hat u)$ is $(-\infty,\alpha]$ or $[\beta, \infty)$ for some $\alpha, \beta \in \re^1$, we also have the same contradiction. %if we assume that $\hat u \ngeq 0$ in $\re^1$. 
Therefore we obtain  $\hat u \ge 0$ in $\re^1$.

By the definition of $\hat u$ and $\partial_2 u \ge 0$ in $\re^n$, we obtain
$$u(x_1,x_2) \ge \hat u(x_1) \ge 0 \quad \forall x=(x_1,x_2)\in \re^2,$$
and $u$ is a solution of 
\begin{equation*}
\Delta u = \chi_{\{0<u<\psi\}} + a \chi_{\{0<u=\psi\}}, \quad 0\le u \le \psi \quad \text{  a.e. in } \re^2.
\end{equation*}
\end{proof}

\subsection{Homogeneity of Blowup and Shrink-down of Global Solutions with the Upper Obstacle $\psi=\frac{a}{2}(x_1^+)^2$}
\textup{In order to deal with homogeneity, we introduce Weiss' energy functional for the problem \eqref{main eq}. It is a modification of Weiss' energy functional for the classical obstacle problem, 
$\Delta u =\chi_{\{u>0\}}$, $u \ge 0$ in $B_R$, and has already appeared in \cite{Ale}. We give the proof for reader's convenience. 
}
 
\begin{definition}\label{def wei}
Let  $u\in P_R(M)$ be a solution of
$$\La u=\chi_{\{0<u<\psi \}}+\La \psi \chi_{\{0<u=\psi\}} \quad  \text{ on } B_R,$$
with the upper obstacle $$\psi(x)=\frac{a}{2}( x_1^+)^2.$$
We define Weiss' energy functional for $u$ and $0<r<R$ as
\begin{align*}
W(r,u):&=\frac{1}{r^{n+2}}\int_{B_r}(|Du|^2+2u\La u)dx-\frac{2}{r^{n+3}}\int_{\partial B_r} u^2 dH^{n-1}\\
&=\frac{1}{r^{n+2}}\int_{B_r}|Du|^2dx + \int_{B_r\cap \{\psi>u>0\} } 2udx+ \int_{B_r\cap \{\psi=u>0\} } 2audx\\
&-\frac{2}{r^{n+3}}\int_{\partial B_r} u^2 dH^{n-1}
\end{align*}
\end{definition}

\begin{theorem}[Weiss' monotonicity formula]
Let $u, \psi$ be as in Definition \ref{def wei}. Then $r \to W(r,u)$ is a nondecreasing absolutely continuous function for $0<r<R$ and
\begin{equation*}
\frac{d}{dr}W(r,u) = \frac{2}{r^{n+4}}\int_{\partial B_r}|x \cdot Du(x)-2u(x)|^2 dH^{n-1},
\end{equation*}
for a.e. $0<r<R$. Furthermore, if $W(r,u)$ is constant for $r>0$, then $u$ is homogeneous of degree two, i.e.,
$$u(\lambda x)= \lambda^2u(x) \quad \text{ for all } x\in \re^n, \lambda >0.$$
\end{theorem}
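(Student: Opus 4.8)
The plan is to follow the standard route for Weiss-type monotonicity formulas, exploiting that $\psi(x)=\frac a2(x_1^+)^2$ is homogeneous of degree two, so that the problem is invariant under the quadratic rescaling $u\mapsto u_r$, $u_r(x)=r^{-2}u(rx)$. The first observation is the scaling identity $W(r,u)=W(1,u_r)$, which is immediate from the change of variables $x\mapsto rx$ in each of the three integrals in Definition \ref{def wei}: one uses $\psi(rx)=r^2\psi(x)$, that $u_r$ solves the same equation on $B_{R/r}$, and that the coincidence sets $\{0<u_r<\psi\}$, $\{u_r=\psi>0\}$ are the $r^{-1}$-dilates of those of $u$. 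Hence it suffices to compute $\frac{d}{dr}W(1,u_r)$.

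Write $v=u_r$ and $\dot v=\partial_r u_r=\tfrac1r\bigl(x\cdot\nabla v-2v\bigr)$ (a direct computation from $v(x)=r^{-2}u(rx)$). I would split $W(1,v)=\int_{B_1}|\nabla v|^2\,dx+\int_{B_1}2v\Delta v\,dx-2\int_{\partial B_1}v^2\,dH^{n-1}$ and differentiate term by term. Since $v\in C^{1,1}_{loc}$ uniformly for $r$ in a compact interval, the first term gives, after integration by parts, $-2\int_{B_1}\dot v\,\Delta v\,dx+2\int_{\partial B_1}\dot v\,\partial_\nu v\,dH^{n-1}$, and the last term gives $-4\int_{\partial B_1}v\dot v\,dH^{n-1}$. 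The crucial claim is that the middle (bulk) term satisfies $\frac{d}{dr}\int_{B_1}2v\Delta v\,dx=2\int_{B_1}\dot v\,\Delta v\,dx$, so that the two bulk contributions cancel. Granting this, $\frac{d}{dr}W(1,u_r)=2\int_{\partial B_1}(\partial_\nu v-2v)\dot v\,dH^{n-1}=\frac2r\int_{\partial B_1}|x\cdot\nabla v-2v|^2\,dH^{n-1}$, using $\partial_\nu v=x\cdot\nabla v$ on $\partial B_1$ and the formula for $\dot v$. Undoing the rescaling (with $y=rx\in\partial B_r$, so $x\cdot\nabla v(x)-2v(x)=r^{-2}(y\cdot\nabla u(y)-2u(y))$ and $dH^{n-1}(x)=r^{-(n-1)}dH^{n-1}(y)$) turns this into the asserted identity $\frac{d}{dr}W(r,u)=\frac{2}{r^{n+4}}\int_{\partial B_r}|x\cdot\nabla u(x)-2u(x)|^2\,dH^{n-1}$.

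The main obstacle is exactly the bulk cancellation $\frac{d}{dr}\int_{B_1}2v\Delta v\,dx=2\int_{B_1}\dot v\,\Delta v\,dx$, since the coincidence sets depend on $r$. Here I would use three facts. First, $\Gamma(u)$ and $\Gamma^\psi(u)$ have Lebesgue measure zero (Lemma \ref{Leb fb} together with Remark \ref{nonde for v}), so $\Delta v=\chi_{\{0<v<\psi\}}+a\chi_{\{v=\psi>0\}}$ up to a null set, and the differentiation in $r$ does not see the moving free boundary for a.e.\ $r$. Second, on the open set $\{0<v<\psi\}$ one has $\Delta v\equiv1$, so $\frac{d}{dr}(2v\Delta v)=2\dot v=2\Delta v\,\dot v$ there. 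Third, on the interior of $\{v=\psi\}$ one has $v\equiv\psi$, hence $2v\Delta v=2a\psi$ is independent of $r$, while at the same time $\dot v=\tfrac1r(x\cdot\nabla\psi-2\psi)\equiv0$ by Euler's identity for the $2$-homogeneous $\psi$, so both sides vanish. Equivalently — and this is the way I would actually write it to avoid differentiating integrals over moving domains — the coincidence sets of $u_r$ are exact $r^{-1}$-dilates of those of $u$, which reduces $\int_{B_1}2v\Delta v\,dx$ to $r^{-n-2}\int_{B_r}2u\Delta u\,dx$ and makes the derivative a direct computation valid for a.e.\ $r$ (the exceptional values being those for which $\partial B_r$ meets the free boundary in positive $H^{n-1}$-measure, a null set by the coarea formula).

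Finally, monotonicity is immediate since the right-hand side is nonnegative; absolute continuity of $r\mapsto W(r,u)$ on compact subsets of $(0,R)$ holds because $|\nabla u|^2+2u\Delta u\in L^\infty_{loc}$, so $r\mapsto r^{-n-2}\int_{B_r}(\cdots)$ is locally Lipschitz, and $r\mapsto r^{-n-3}\int_{\partial B_r}u^2\,dH^{n-1}$ is $C^1$ by continuity of $u$. If $W(\cdot,u)$ is constant, then $\int_{\partial B_r}|x\cdot\nabla u-2u|^2\,dH^{n-1}=0$ for a.e.\ $r$, whence $x\cdot\nabla u(x)=2u(x)$ for a.e.\ $x$, and then everywhere by continuity; integrating the Euler identity along rays, $\frac{d}{dt}\bigl(t^{-2}u(tx)\bigr)=t^{-3}\bigl(tx\cdot\nabla u(tx)-2u(tx)\bigr)=0$, gives $u(\lambda x)=\lambda^2 u(x)$ for all $x\in\re^n$ and $\lambda>0$.
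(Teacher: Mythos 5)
Your proof is correct and follows essentially the same route as the paper: the scaling identity $W(r,u)=W(1,u_r)$, differentiation in $r$, integration by parts on the $|\nabla u_r|^2$ term, cancellation of the resulting bulk term against the derivative of $\int 2u_r\Delta u_r$, and undoing the rescaling to land on $\frac{2}{r^{n+4}}\int_{\partial B_r}|x\cdot\nabla u-2u|^2$. You are in fact somewhat more careful than the paper about justifying the bulk cancellation (the paper silently differentiates under integrals over the $r$-dependent coincidence sets), and you supply the absolute-continuity and homogeneity arguments that the paper leaves implicit, but the underlying scheme is identical.
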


\begin{proof}
By the scaling property $W(r,u)=W(1,u_r)$, we have
\begin{align*}
\frac{d}{dr}W(r,u)&=\frac{d}{dr}W(1,u_r)\\
&=\int_{B_1}\frac{d}{dr}(|Du_r|^2)dx + \int_{B_1\cap \{\psi_r>u_r>0\} } 2\frac{d}{dr}(u_r)dx\\
&+\int_{B_1\cap \{\psi_r=u_r>0\} } 2a\frac{d}{dr}(u_r)dx-2\int_{\partial B_1} \frac{d}{dr}(u_r^2) dH^{n-1}.\\
\end{align*}

Since $\dfrac{d}{dr}(\nabla u_r)=\nabla \dfrac{du_r}{dr}$ and $\dfrac{du_r}{dr}=\dfrac{x \cdot \nabla u_r-2u_r}{r},$ we obtain, by integration by parts,

\begin{align*}
\frac{d}{dr}W(r,u)&=2\int_{B_1}- \La u_r \dfrac{du_r}{dr}dx + \int_{B_1\cap \{\psi_r>u_r>0\} } 2\frac{du_r}{dr}dx \nonumber \\ 
&+\int_{B_1\cap \{\psi_r=u_r>0\} } 2a\frac{du_r}{dr}dx+2\int_{\partial B_1}(\partial_\nu u_r -2u_r)\dfrac{du_r}{dr} dH^{n-1}\\ \nonumber
&=2r\int_{\partial B_1}\left|\dfrac{du_r}{dr}\right|^2 dH^{n-1}.
\end{align*}
Then we have the desired equality after scaling.
\end{proof}

\begin{cor} (Homogeneity of blowup and shrink-down) \label{hom}
Let  $u\in P_\infty(M)$ be a solution of
$$\La u=\chi_{\{0<u<\psi \}}+\La \psi \chi_{\{0<u=\psi\}} \quad  \text{ on } \re^n,$$
with the upper obstacle $$\psi(x)=\frac{a}{2}( x_1^+)^2.$$
Then any blowup function $u_0$ of $u$ at $0$ and any shrink-down $u_\infty$ of $u$ at $0$ are homogeneous of degree two. 
\end{cor}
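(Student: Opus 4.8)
The plan is to read off homogeneity from Weiss' monotonicity formula proved above, using the scaling identity $W(r,v)=W(1,v_r)$ together with the $C^{1,\alpha}_{loc}$-convergence that defines blowups and shrink-downs. First I would note that $W(\cdot,u)$ is bounded on $(0,\infty)$: since $0\in\Gamma(u)$ forces $u(0)=0$ and $\nabla u(0)=0$, the bound $\|D^2u\|_{\infty,\re^n}\le M$ gives $|u(x)|\le\frac{M}{2}|x|^2$, $|Du(x)|\le M|x|$ and $|\La u(x)|\le nM$; inserting these into the formula of Definition \ref{def wei} shows $|W(r,u)|\le CM^2$ for every $r>0$, with $C=C(n)$. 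Being nondecreasing and bounded, $W(\cdot,u)$ then has finite limits
\[
W(0^+,u):=\lim_{r\to0^+}W(r,u),\qquad W(\infty,u):=\lim_{r\to\infty}W(r,u).
\]

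For a blowup, fix $\lambda_j\to0$ with $u_{\lambda_j}\to u_0$ in $C^{1,\alpha}_{loc}(\re^n)$. The quadratic obstacle is scale-invariant ($\psi_\lambda=\psi$), so each $u_{\lambda_j}$ is again a global solution with upper obstacle $\psi$ and $0\in\Gamma(u_{\lambda_j})$; hence from $(u_{\lambda_j})_r=u_{\lambda_j r}$ and $W(\rho,v)=W(1,v_\rho)$ we get $W(r,u_{\lambda_j})=W(\lambda_j r,u)\to W(0^+,u)$ for every fixed $r>0$. At the same time $W(r,u_{\lambda_j})\to W(r,u_0)$: using $\int_{B_r}v\La v=\int_{\partial B_r}v\,\partial_\nu v\,dH^{n-1}-\int_{B_r}|Dv|^2dx$ (legitimate since $v\in W^{2,\infty}_{loc}$) one rewrites
\[
W(r,v)=\frac{1}{r^{n+2}}\Big(2\int_{\partial B_r}v\,\partial_\nu v\,dH^{n-1}-\int_{B_r}|Dv|^2dx\Big)-\frac{2}{r^{n+3}}\int_{\partial B_r}v^2\,dH^{n-1},
\]
an expression in $v$ and $Dv$ alone, which passes to the limit under the uniform convergence of $u_{\lambda_j}$ and $Du_{\lambda_j}$ on $\overline{B_r}$ (and of the radial derivatives on $\partial B_r$). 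Comparing the two limits gives $W(r,u_0)=W(0^+,u)$ for all $r>0$, i.e.\ $r\mapsto W(r,u_0)$ is constant. Since $u_0$ is itself a global solution of the same problem (non-degeneracy, Lemma \ref{nond}, preserves $0\in\Gamma(u_0)$, and the measure-zero property of the free boundary, Lemma \ref{Leb fb}, lets the equation pass to the limit a.e.), the last assertion of Weiss' monotonicity formula yields that $u_0$ is homogeneous of degree two.

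The shrink-down case is verbatim the same with $\lambda_j\to\infty$ and $u_{\lambda_j}\to u_\infty$ in $C^{1,\alpha}_{loc}$: one has $W(r,u_{\lambda_j})=W(\lambda_j r,u)\to W(\infty,u)$ and $W(r,u_{\lambda_j})\to W(r,u_\infty)$, so $W(\cdot,u_\infty)$ is constant and $u_\infty$ is homogeneous of degree two. The main obstacle is precisely the convergence $W(r,u_{\lambda_j})\to W(r,u_0)$: written in the original form of Definition \ref{def wei}, $W$ involves the coincidence sets $\{\psi>u>0\}$ and $\{\psi=u>0\}$, which need not converge in measure from $C^{1,\alpha}$ information alone; the integration-by-parts rewriting above sidesteps this, and alternatively one may keep the term $\int_{B_r}2v\La v$ and use that $\La u_{\lambda_j}\rightharpoonup\La u_0$ weak-$\ast$ in $L^\infty_{loc}$ (the Laplacians are uniformly bounded) paired against the uniformly convergent $u_{\lambda_j}$.
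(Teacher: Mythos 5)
Your argument is the same as the paper's: use the scaling identity $W(r,u_{\lambda_j})=W(\lambda_j r,u)$ together with Weiss' monotonicity to conclude that $W(\cdot,u_0)$ (resp.\ $W(\cdot,u_\infty)$) is constant, and then invoke the last assertion of the monotonicity formula to get homogeneity of degree two. The paper simply asserts the limit $W(r,u_0)=\lim_{j}W(r,u_{\lambda_j})$ without comment; your integration-by-parts rewriting of $W$ as a functional of $v$ and $Dv$ alone (or, alternatively, the weak-$*$ convergence of the uniformly bounded Laplacians paired against the uniformly convergent $u_{\lambda_j}$) is exactly the right way to justify that step rigorously, and your remark that non-degeneracy plus the measure-zero free boundary let the equation pass to the limit is the correct justification for applying the monotonicity formula to $u_0$ itself.
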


\begin{proof}
Suppose that $\lambda_j \to 0$ as $j \to \infty$ and $u_{\lambda_j} \to u_0$ in $C^{1,\alpha}_{loc}(\re^n)$ as $j \to \infty$. Then for $r>0$
$$W(r,u_0)=\lim_{j \to \infty} W(r,u_{\lambda_j})=\lim_{j \to \infty} W(\lambda_j r,u)=W(0+,u),$$
i.e., $W(r,u_0)$ is constant for any $r$.
Hence, $u_0$ is homogeneous of degree two.\\
\indent In order to prove the homogeneity for shrink-down $u_\infty$, we take a sequence $\lambda'_j \to \infty$ as $j \to \infty$ and $u_{\lambda'_j} \to u_\infty$ in $C^{1,\alpha}_{loc}(\re^n)$ as $j \to \infty$. The same argument as above shows that $W(r,u_\infty)$ is constant for any $r>0$ and the homogeneity of shrink-down.
\end{proof}

\textup{Under the conditions of Theorem \ref{reg lo1}, we know that the blowups and shrink-downs of the blowups $u_0$ of $u\in P_1(M)$ are two-dimensional and homogeneous of degree two, see the proof of Proposition \ref{cla glo}. For further study on the main theorem, we need to know about the global solutions which are two-dimensional and homogeneous of degree two.}

\begin{lem}\label{cla hom} 
Let $u\in P_\infty(M)$ and $u$ is a solution of 
$$\Delta u = \chi_{\{0<u<\psi\}} + a \chi_{\{0<u=\psi\}}, \quad 0\le u \le \psi \quad \text{  a.e. in } \re^2,$$ %\mpar{Don't we need $a > 1$ ?}
with the upper obstacle
$$\psi(x)=\frac{a}{2}( x_1^+)^2,$$
for a constant $a>1$.
Suppose that $u$ is homogeneous of degree two. Then
$$u(x)=\frac{1}{2}( x_1^+)^2  \quad \text{ or } \quad u(x)= \frac{a}{2}( x_1^+)^2.$$
\end{lem}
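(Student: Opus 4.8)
The plan is to exploit the two-dimensionality and degree-two homogeneity to reduce the classification to an analysis on the unit circle. Write $u$ in polar coordinates $x = (r\cos\theta, r\sin\theta)$; homogeneity of degree two gives $u(x) = r^2 g(\theta)$ for some $g \in C^{1,1}(\mathbb{S}^1)$ with $g \ge 0$. The upper obstacle in polar form is $\psi = \frac{a}{2} r^2 (\cos\theta)^+{}^2$, so $\{x_1 > 0\}$ corresponds to $\theta \in (-\pi/2, \pi/2)$, and on that half-plane $u = \psi$ means $g(\theta) = \frac{a}{2}\cos^2\theta$, while on $\{x_1 \le 0\}$ we always have $u < \psi$ (since $\psi \equiv 0$ there and $u = 0$ would force $x \in \Lambda(u)$, which is fine, but $u = \psi$ can only happen where $u = 0$). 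First I would record the Laplacian: $\Delta u = r^{-2} \cdot r^2 (\theta\text{-part})$, precisely $\Delta(r^2 g(\theta)) = 4g(\theta) + g''(\theta)$. Thus the PDE becomes the ODE $g'' + 4g = h(\theta)$ on $\mathbb{S}^1$, where $h = 1$ on $\{0 < u < \psi\}$, $h = a$ on $\{0 < u = \psi\}$, and $h = 0$ on $\operatorname{Int}\Lambda(u)$ (i.e. where $g \equiv 0$ on an arc).

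Next I would determine the coincidence set $\Lambda(u)$ on the circle. By Proposition \ref{pos glo} we know $0 \le u$, and by Lemma \ref{2dim} (via Lemma \ref{lem 2dim}) $\partial_2 w \ge 0$, so in the $(x_1,x_2)$ variables $u$ is monotone nondecreasing in $x_2$; combined with homogeneity this forces the zero set $\{g = 0\}$ to be a single closed arc (a sub-level structure of a monotone homogeneous function), and non-degeneracy (Lemma \ref{nond}) rules out $g \equiv 0$. So $\{g = 0\}$ is an arc $[\theta_1,\theta_2]$ (possibly empty, possibly all of the lower half-circle), on which $g = g' = 0$ at the endpoints by $C^{1,1}$ regularity. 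On the complementary arc(s), $g$ solves $g'' + 4g \in \{1, a\}$ piecewise, with $g > 0$.

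Then comes the case analysis, which I expect to be the main obstacle — keeping track of which arcs carry which right-hand side and matching $C^1$ data across the free boundary points $\theta$ with $g(\theta) = \frac{a}{2}\cos^2\theta$ (where the solution detaches from or attaches to the upper obstacle). The particular solutions are: $g \equiv \frac{a}{2}\cos^2\theta = \frac{a}{4} + \frac{a}{4}\cos 2\theta$ solves $g'' + 4g = a$ exactly (check: $(\cos 2\theta)'' + 4\cos 2\theta = 0$, so $g'' + 4g = a$), consistent with $u = \psi$ on all of $\{x_1 > 0\}$; and $g = \frac{1}{2}\cos^2\theta$ on $\{x_1 > 0\}$ (extended by, matching $C^1$ into the lower half, where it would have to satisfy $g'' + 4g = 0$ or $= 1$) — one must check $\frac12\cos^2\theta$ on $(-\pi/2,\pi/2)$ together with $g = g' = 0$ at $\pm\pi/2$ forces $g \equiv 0$ on $[\pi/2, 3\pi/2]$, giving $u = \frac12(x_1^+)^2$. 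The key contradiction-eliminating step is: if the free boundary on the upper side is strictly interior to $\{x_1 > 0\}$ (i.e. $u$ detaches from $\psi$ before reaching $\theta = \pm\pi/2$ and then hits $0$), the $C^{1,1}$ matching of $g$ across that point between a branch with RHS $a$ and a branch with RHS $1$, together with the endpoint conditions at $\{g=0\}$, is overdetermined and has no solution with $g \ge 0$ and $g \le \frac{a}{2}\cos^2\theta$; a short ODE computation (using that $\sin 2\theta, \cos 2\theta$ span the homogeneous solutions and the gap $a - 1 > 0$) closes this off. This eliminates all configurations except the two claimed ones. I would also invoke the ACF/Weiss machinery only implicitly here, since homogeneity is already given; the work is purely the planar ODE classification.
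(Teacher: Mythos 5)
Your approach is genuinely different from the paper's. You pass to polar coordinates $u=r^2 g(\theta)$ and reduce the classification to an ODE $g''+4g = h(\theta)$ on the circle, where $h$ is piecewise constant, and then try to rule out non-trivial configurations by $C^{1}$-matching across free boundary points. The paper instead proves in one clean step that $\partial_2 u\equiv 0$: since $0\le u\le\psi$ forces $u\equiv 0$ on $\{x_1\le 0\}$, and since $\partial_2\psi\equiv 0$ forces $\partial_2 u=0$ on $\{u=\psi\}$, the (homogeneous of degree $1$) function $\partial_2 u$ vanishes outside $\{0<u<\psi\}\cap\{x_1>0\}$, where it is harmonic; the only homogeneous degree-$1$ harmonic function vanishing on the boundary rays of a proper sub-sector of $\{x_1>0\}$ is identically zero, and if the sector is the full half-plane one gets $\partial_2 u=cx_1$, which combined with $\Delta u=1$ and $u\ge 0$ forces $c=0$. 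This reduces immediately to the trivial $1$D classification. Your approach buys nothing over this and costs a lot more bookkeeping: what the paper disposes of with a single sector/eigenfunction computation you must handle through an open-ended enumeration of arc patterns.

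Beyond being longer, the proposal has concrete gaps. (1) You invoke $\partial_2 w\ge 0$ from Lemma~\ref{2dim}/Proposition~\ref{pos glo} to conclude $\{g=0\}$ is a single arc, but monotonicity in $x_2$ is not a hypothesis of Lemma~\ref{cla hom} (nor is the thickness condition that Lemma~\ref{2dim} requires); the lemma as stated only assumes $u\in P_\infty(M)$, $0\le u\le\psi$, homogeneity, and $\psi=\frac{a}{2}(x_1^+)^2$. The paper's proof needs none of that. (2) The crucial step — that $C^1$-matching across a detachment point from $\psi$ into a region with RHS $1$ is ``overdetermined and has no solution'' — is asserted, not shown, and you yourself flag it as ``the main obstacle.'' When one actually carries it out (e.g. the branch of $g''+4g=1$ with $g(\pi/2)=g'(\pi/2)=0$ is forced to be $\frac12\cos^2\theta$, which cannot match $\frac{a}{2}\cos^2\theta_0$ at any $\theta_0<\pi/2$ since $a>1$), it works, but one must also rule out configurations with several alternating arcs of $\{0<u<\psi\}$, $\{0<u=\psi\}$, and $\operatorname{Int}\Lambda(u)$ inside $\{x_1>0\}$, which you do not address. (3) A small error: on $\{x_1\le 0\}$ one has $u=\psi=0$, not $u<\psi$; it's harmless because the right-hand side is $0$ there either way, but the stated reason is wrong. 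In short, the idea is viable but the proof is not actually carried out, whereas the paper's cone argument closes the lemma in a few lines without the extra monotonicity input.
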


\begin{proof}
By the condition $0\le u \le \psi$ and $\psi(x)=\frac{a}{2}( x_1^+)^2$, we know that $\{x_1<0\} \subset \{u=0\} $. We claim that $\partial_2 u \equiv 0$ in $\re^2$, i.e., $u$ is one-dimensional function.

Assume that $\{\partial_2 u \neq 0\}\cap \{x_1>0\}\neq \emptyset$. Then by the homogeneity of degree one for $\partial_2 u$,
 we know that there is a cone 
$$\mathcal C:=\{ r \theta \text{ }|\text{ } r>0, \alpha_1<\theta < \alpha_2 \}\subset \{x_1>0\} ,$$
 ($-\frac{\pi}{2}\le \alpha_1 < \alpha_2 \le \frac{\pi}{2}$) such that
$\partial_2u\neq 0$ in $C$ and $\partial_2 u =0$ on $\partial C.$ Since $ \partial_2 \psi\equiv 0$, we know that 
$$\mathcal C \subset \{0<u < \psi \}\cap \{x_1>0\}$$
and $\partial_2 u$ is harmonic on $\mathcal C$.
Hence $\partial_2 u:=rf(\theta)$ satisfies
$$\Delta \partial_2 u=\Delta \left(rf(\theta) \right)=\frac{1}{r}\left( f(\theta)+f''(\theta) \right)=0 \quad \text{ on } \mathcal C.$$
Thus $f(\theta)$ satisfies $-f''(\theta)=f(\theta)$ in $(\alpha_1, \alpha_2)$ and $f(\theta)=0$ on $\partial (\alpha_1, \alpha_2)$. Hence we obtain
$f(\theta)=c \cos(\theta)$ in $(-\frac{\pi}{2}, \frac{\pi}{2})$, $C=\{ r \theta \text{ }|\text{ } r>0, -\frac{\pi}{2}<\theta < \frac{\pi}{2} \}=\{x_1>0\}$  and 
$$\partial_2 u= cr\cos(\theta)= c x_1 \text{ in }\{x_1>0\}.$$
 Then $u=cx_1x_2$ in $\{x_1>0\}=\{0<u<\psi\}.$ It is a contradiction to $\Delta u=1$ in $\{0<u< \psi\}.$ Hence we obtain that $\partial_2 u \equiv 0$ in $\re^2$. This completes the proof.
\end{proof}

\section{Directional Monotonicity}\label{sec dir mon}

\textup{In this section, we prove the directional monotonicity for solutions to \eqref{main eq}. The proofs in this section follow standard patterns as that of classical obstacle problem but one still needs some care. Hence, we shall give some details. Let us start with the following lemma, where the proof is exactly the same as that of  Lemma 4.1 of \cite{PSU}, and hence omitted.
}

\begin{lem}\label{upos}
 Let $u\in P_1(M)$ and $f\ge c >0$ in $B_1$, $\Delta \psi \ge c>0$ in $\Omega(\psi)$ and any blowup $u_0$ satisfies
$$u_0(x)=\frac{1}{2} (x_1^+)^2 \quad \text{ or } \quad u_0=\frac{a}{2}(x^+_1)^2.$$
 Suppose, further, that
$$ \norm{u-u_0}_{L^\infty(B_1)}\le \epsilon.$$
Then
$$u>0 \quad \text{ in } \{x_1>\sqrt{2\epsilon}\} \cap B_1,$$
$$u=0 \quad \text{ in } \left\{x_1\le -4\sqrt{ \frac{n\epsilon}{c}}\right\} \cap B_{1/2}.$$
\end{lem}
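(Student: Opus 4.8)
The plan is to establish the two inclusions separately, each by a barrier/comparison argument against the explicit blowup profile, exploiting that $\|u-u_0\|_{L^\infty(B_1)}\le\epsilon$ pins $u$ close to $\tfrac12(x_1^+)^2$ or $\tfrac a2(x_1^+)^2$.

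First, for the positivity set $\{x_1>\sqrt{2\epsilon}\}\cap B_1$: since $u_0(x)=\tfrac{b}{2}(x_1^+)^2$ with $b\in\{1,a\}$ and $b\ge 1$, at any point with $x_1>\sqrt{2\epsilon}$ we have $u_0(x)=\tfrac b2 x_1^2\ge\tfrac12 x_1^2>\epsilon$, hence $u(x)\ge u_0(x)-\epsilon>0$. This is immediate from the $L^\infty$ closeness and needs no PDE input, so it is the easy half.

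Second, for $\{x_1\le -4\sqrt{n\epsilon/c}\}\cap B_{1/2}$ I would argue that $u\equiv 0$ there. The idea is that on the slab $S=\{x_1<0\}\cap B_1$ we have $u\le u_0+\epsilon=\epsilon$ (since $u_0=0$ for $x_1\le 0$), and on $\Omega(u)$ we have $\Delta u\ge c$ by \eqref{eq non} from the proof of Lemma \ref{nond}. I construct a comparison function on $S\cap\Omega(u)$: if $\Lambda(u)$ were to miss the region $\{x_1\le -4\sqrt{n\epsilon/c}\}\cap B_{1/2}$, then $u>0$ there and (since $u\le\epsilon$ on the whole half-ball while $\Delta u\ge c$) a quadratic barrier forces $u$ to be negative somewhere, a contradiction. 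Concretely, fix $x^0\in\{x_1\le -4\sqrt{n\epsilon/c}\}\cap B_{1/2}$ and suppose $u(x^0)>0$; let $\rho=2\sqrt{n\epsilon/c}$ so that $B_\rho(x^0)\subset\{x_1<0\}\cap B_1$ and consider $w(x)=u(x)-\tfrac{c}{2n}|x-x^0|^2$; then $\Delta w\ge 0$ on $B_\rho(x^0)\cap\Omega(u)$, and the same maximum-principle bookkeeping as in Lemma \ref{nond} (using $w<0$ on $\partial\Omega(u)$) gives $\sup_{\partial B_\rho(x^0)}u\ge u(x^0)+\tfrac{c}{2n}\rho^2>\tfrac{c}{2n}\rho^2=2\epsilon>\epsilon$, contradicting $u\le\epsilon$ on $\{x_1<0\}\cap B_1$. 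Hence $u(x^0)\le 0$, and since $u$ is subharmonic and vanishes outside its support one checks $u\equiv 0$ on that region (alternatively: $u(x^0)\le 0$ together with $x^0\notin\Omega(u)$ forces $\nabla u(x^0)=0$ and $u(x^0)=0$, which propagates by the strong maximum principle as in the last paragraph of Lemma \ref{nond}).

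The main obstacle is getting the constants right in the second inclusion — namely verifying that with the distance $4\sqrt{n\epsilon/c}$ to the hyperplane $\{x_1=0\}$ one can fit a ball of radius $\rho=2\sqrt{n\epsilon/c}$ inside $\{x_1<0\}\cap B_1$ (which needs $4\sqrt{n\epsilon/c}+2\sqrt{n\epsilon/c}<1$, i.e.\ $\epsilon$ small depending on $n,c$, absorbed into the smallness of $\epsilon$) and that $\tfrac{c}{2n}\rho^2=2\epsilon>\epsilon$, so the barrier genuinely beats the $L^\infty$ bound. Everything else is a direct transcription of the non-degeneracy argument of Lemma \ref{nond}, applied in the slab where $u_0$ vanishes; there is no conceptual difficulty beyond this calibration, which is why the paper attributes it to Lemma 4.1 of \cite{PSU}.
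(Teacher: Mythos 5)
Your treatment of the positivity set $\{x_1>\sqrt{2\epsilon}\}\cap B_1$ is correct, and your use of the non-degeneracy barrier from part (i) of Lemma~\ref{nond} correctly yields that $u\le 0$ on $\{x_1\le -4\sqrt{n\epsilon/c}\}\cap B_{1/2}$ (the constants $\rho=2\sqrt{n\epsilon/c}$ and $\tfrac{c}{2n}\rho^{2}=2\epsilon>\epsilon$ do work out, modulo the harmless smallness restriction on $\epsilon$ you note). The genuine gap is the passage from $u\le 0$ to $u\equiv 0$. In this problem $u$ is \emph{not} assumed nonnegative, so $u\le 0$ does not finish the job, and neither of your two suggestions for closing the gap is an argument. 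The phrase ``$u$ is subharmonic and vanishes outside its support'' does not single out a mechanism, and the ``alternative'' is circular: you write ``$u(x^0)\le 0$ together with $x^0\notin\Omega(u)$ forces \dots'', but $x^0\notin\Omega(u)$ is precisely (by the definition $\Omega(u)=B_1\setminus(\{u=0\}\cap\{\nabla u=0\})$) the statement $u(x^0)=0$, $\nabla u(x^0)=0$ that you are trying to prove; nothing in what precedes it rules out $x^0\in\Omega(u)$ with $-\epsilon\le u(x^0)<0$.

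What is missing is a second barrier that uses the \emph{lower} half of the $L^\infty$ bound, namely $u\ge u_0-\epsilon=-\epsilon$ on $\{x_1<0\}\cap B_1$. Concretely: your first barrier already gives $u\le 0$ on the larger set $\{x_1\le -2\sqrt{n\epsilon/c}\}\cap B_{1-2\sqrt{n\epsilon/c}}$ (the same computation, with room to spare). Now fix $x^0\in\{x_1\le -4\sqrt{n\epsilon/c}\}\cap B_{1/2}$ and suppose $u(x^0)<0$. With $\rho=2\sqrt{n\epsilon/c}$ one has $B_\rho(x^0)\subset\{x_1\le -2\sqrt{n\epsilon/c}\}\cap B_1$, so $u\le 0$ there; since $u$ is subharmonic and $u(x^0)<0$, the strong maximum principle forces $u<0$ throughout $B_\rho(x^0)$, hence $B_\rho(x^0)\subset\Omega(u)$ and, because $u<0$ implies $u<\psi$ or $u=\psi\ne 0$, we get $\Delta u\ge c$ on $B_\rho(x^0)$. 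Comparing $u$ with the paraboloid $w(x)=-\tfrac{c}{2n}\bigl(\rho^2-|x-x^0|^2\bigr)$ (which is superharmonic with $\Delta w=c\le\Delta u$ and $w=0\ge u$ on $\partial B_\rho(x^0)$) yields $u(x^0)\le w(x^0)=-\tfrac{c}{2n}\rho^2=-2\epsilon$, contradicting $u(x^0)\ge-\epsilon$. Thus $u\ge 0$ on the slab, and combined with $u\le 0$ this gives $u\equiv 0$ there. Without this second barrier the proof is incomplete.
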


\begin{lem}\label{inc dri}
 Let $u\in P_1(M)$ and $f\ge c >0$ in $B_1$, $\La \psi \ge c>0$ in $\Omega(\psi)$. Suppose that we have
$$C\partial_e \psi -\psi \ge -\epsilon_0, \quad C\partial_e u -u\ge -\epsilon_0 \quad \text{ in } B_1,$$
for a direction $e$ and $\epsilon_0<c/64n$. Then we obtain
$$C\partial_e \psi -\psi \ge 0 \quad \text{ in } B_{3/4}, \quad C\partial_e u -u \ge 0 \quad \text{ in } B_{1/2},$$
where $\norm{Df}_{L^{\infty}(\overline{B_1})}, \norm{D^3\psi}_{L^{\infty}(\overline{\Omega(\psi)\cap B_1})}<\dfrac{c}{2C}$.
\end{lem}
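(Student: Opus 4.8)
The plan is to run the standard "dichotomy + shrinking" argument for directional monotonicity, treating the obstacle $\psi$ and the solution $u$ in parallel but in the right order, since the region where $\Delta u$ is controlled by $f$ depends on $\psi$. First I would handle the obstacle inequality $C\partial_e\psi-\psi\ge 0$ in $B_{3/4}$: set $g:=C\partial_e\psi-\psi$. On $\Omega(\psi)$ one has $\Delta\psi = a\chi_{\Omega(\psi)}$-type behaviour — more precisely here $\psi\in C^{2,1}(\overline{\Omega(\psi)})$ and $\Delta\psi\ge c>0$ there — so $\Delta g = C\partial_e(\Delta\psi) - \Delta\psi \le C\|D^3\psi\|_{\infty} - c < 0$ on $\Omega(\psi)$, using the hypothesis $\|D^3\psi\|_{L^\infty(\overline{\Omega(\psi)\cap B_1})} < c/(2C)$. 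Thus $g$ is superharmonic on $\Omega(\psi)\cap B_1$. On the complement $\Lambda(\psi)=\{\psi=0\}\cap\{\nabla\psi=0\}$ we have $g = C\partial_e\psi - \psi = 0$ pointwise. So $g\ge -\epsilon_0$ in $B_1$, $g=0$ on $\Lambda(\psi)$, and $g$ is superharmonic where it is possibly negative; by the minimum principle on each component of $\{g<0\}\cap B_1$ (whose boundary lies in $\partial B_1\cup\{g=0\}$), and comparison with the harmonic function matching $-\epsilon_0$ on $\partial B_1$ and $0$ on the inner boundary, one gets that $\min_{B_{3/4}} g \ge -C_1\epsilon_0$ for a dimensional $C_1<1$ improvement — iterating this shrinking a finite number of times (or invoking the smallness $\epsilon_0 < c/64n$ to absorb the loss directly) upgrades $g\ge 0$ in $B_{3/4}$.

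Next, with $C\partial_e\psi\ge\psi$ now known in $B_{3/4}$, I would treat $h:=C\partial_e u - u$ by the same mechanism, but now the set structure is governed by $u$. On $\Omega(u)\cap\{u<\psi\}$ we have $\Delta u = f$, hence $\Delta h = C\partial_e f - f \le C\|Df\|_\infty - c < 0$ by $\|Df\|_{L^\infty(\overline{B_1})} < c/(2C)$, so $h$ is superharmonic there. On $\Omega(u)\cap\{u=\psi\}\subset\Omega(\psi)$ we have $\Delta u = \Delta\psi$, and $C\partial_e u - u = C\partial_e\psi - \psi = g \ge 0$ on this set since $u=\psi$ forces $\partial_e u=\partial_e\psi$ there (as $u\le\psi$ with equality at an interior-to-$\Omega(\psi)$ touching point — the paper's observation $\{u=\psi\}=\{u=\psi\}\cap\{\nabla u=\nabla\psi\}$). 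On $\Lambda(u)=\{u=0\}\cap\{\nabla u=0\}$ we have $h=0$ pointwise. So again $h\ge -\epsilon_0$ in $B_{3/4}$, $h\ge 0$ on $(\Omega(u)\cap\{u=\psi\})\cup\Lambda(u)$, and $h$ is superharmonic on the remaining open set $\Omega(u)\cap\{u<\psi\}$; the minimum of $h$ over a component of $\{h<0\}$ is attained on the part of its boundary lying in $\partial B_{3/4}$, and the same comparison/shrinking argument gives $h\ge 0$ in $B_{1/2}$.

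The main obstacle — and the place needing genuine care rather than a citation to \cite{PSU} — is the coupling at the boundary piece $\Omega(u)\cap\{u=\psi\}$: one must be sure that on the "contact with the upper obstacle" set the quantity $h=C\partial_e u-u$ really coincides with $g=C\partial_e\psi-\psi$ and is therefore already nonnegative there, which is exactly why the obstacle inequality must be established \emph{first} in a slightly larger ball ($B_{3/4}$ rather than $B_{1/2}$). A secondary technical point is that $\{h<0\}$ need not be contained in the single region $\{u<\psi\}$; one should argue that any connected component of $\{h<0\}\cap B_{3/4}$ is automatically contained in $\Omega(u)\cap\{u<\psi\}$ (since $h\ge0$ on the other two sets), so that superharmonicity of $h$ is available on all of that component, and then the minimum principle applies cleanly. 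With these two observations in place the remaining estimates are the routine barrier computations and I would not spell them out in full.
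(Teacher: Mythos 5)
Your structure is exactly the paper's: treat $\psi$ first in the larger ball $B_{3/4}$, then $u$ in $B_{1/2}$, observing on each contact set that the relevant quantity is already nonnegative pointwise ($g=0$ on $\Lambda(\psi)$, $h=0$ on $\Lambda(u)$, and $h=g\ge 0$ on $\Omega(u)\cap\{u=\psi\}$ since $\nabla u=\nabla\psi$ there), so that superharmonicity is only needed on the remaining open region. The genuine gap is in the quantitative upgrade from $\ge-\epsilon_0$ to $\ge 0$. The claimed ``dimensional improvement'' $\min_{B_{3/4}}g\ge-C_1\epsilon_0$ via comparison with a harmonic function interpolating between $-\epsilon_0$ on $\partial B_1$ and $0$ on ``the inner boundary'' would require that the inner boundary (the set where $g=0$) occupy a definite portion of $B_1$; the lemma makes no such assumption, so $C_1<1$ is not available uniformly. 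Moreover, even granting a fixed $C_1<1$, iterating a strict contraction $g\ge-C_1^k\epsilon_0$ never reaches $0$ in finitely many steps, and the balls in which the bound holds shrink as one iterates.

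The hypothesis $\epsilon_0<c/64n$ is not meant to be ``absorbed'' iteratively; it is calibrated to a one-shot quadratic barrier, and this is precisely where your strict superharmonicity computation $\Delta g\le C\|D^3\psi\|_\infty-\Delta\psi\le-\tfrac{c}{2}$ must actually be exploited rather than discarded to mere $\Delta g\le 0$. The paper's argument: suppose $g(y)<0$ for some $y\in B_{3/4}\cap\Omega(\psi)$ and set $\phi=g+\tfrac{c}{4n}|x-y|^2$. Then $\Delta\phi\le C\|D^3\psi\|_\infty-\Delta\psi+\tfrac{c}{2}\le 0$ on $\Omega(\psi)$, so $\phi$ is superharmonic on $B_{1/4}(y)\cap\Omega(\psi)$; since $\phi(y)=g(y)<0$ while $\phi\ge 0$ on $\partial\Omega(\psi)$, the minimum principle pushes the negative infimum onto $\partial B_{1/4}(y)\cap\Omega(\psi)$, where $\phi=g+\tfrac{c}{64n}$, yielding a point with $g<-\tfrac{c}{64n}$ and contradicting $g\ge-\epsilon_0>-\tfrac{c}{64n}$. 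Exactly the same barrier, applied to $\phi_h=h+\tfrac{c}{4n}|x-y|^2$ on $B_{1/4}(y)\cap\Omega(u)\cap\{u<\psi\}$ for $y\in B_{1/2}$ (with the boundary nonnegativity supplied by your correct observations on $\Lambda(u)$ and on $\{u=\psi\}$), finishes the $u$ half. In short: your decomposition is right and your computation already contains the key estimate, but the comparison-and-iteration mechanism for the final step does not close and must be replaced by the quadratic barrier.
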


\begin{proof}
First, we will prove $$C\partial_e \psi -\psi \ge 0 \quad \text{ in } B_{3/4}.$$
Arguing by contradiction, suppose there is a point $y\in B_{3/4} \cap \Omega(\psi)$ such that $C\partial_e \psi(y) -\psi(y) < 0.$ 
Define the auxiliary function
$$\phi(x)=C\partial_e \psi(x) -\psi(x)+\frac{c}{4n}|x-y|^2.$$
Then
\begin{align*}
\La \phi(x)&=C\La \partial_e \psi(x) -\La\psi(x)+\frac{c}{2}\\
&\le C\norm{D^3 \psi}_{L^{\infty}(\overline{\Omega(\psi)\cap B_1})} -\La\psi(x)+\frac{c}{2}\\
&\le c-\La \psi \le 0  
\end{align*}
$\text{on } B_{1/4}(y) \cap \Omega (\psi).$ Since $\phi(y)<0$, by the minimum principle, $\phi$ has the negative infimum on $\partial (B_{1/4}(y) \cap \Omega(\psi))$. Since $\phi \ge0$ on $\partial \Omega(\psi)$, we have 
$$\inf_{\partial B_{1/4}(y) \cap\Omega(\psi)} \phi <0.$$
It is equivalent to
$$\inf_{\partial B_{1/4}(y) \cap\Omega(\psi)}\left( C\partial_e \psi-\psi \right) <-\frac{c}{64n}.$$
Since $\epsilon_0<c/64n$, we have a contradiction. 

By using $C\partial_e \psi -\psi \ge 0 \text{ in } B_{3/4}$, $\{u=\psi\}=\{u=\psi\}\cap \{\nabla u=\nabla \psi\}$ (since $u \le \psi$) and the same method as above, we have
$$C\partial_e u -u \ge 0 \quad \text{ in } B_{1/2}\cap \Omega(u)\cap \{ u<\psi\}.$$
%Since $\Lambda^\psi(u)=\{u=\psi\}\cap \{\nabla u =\nabla \psi\} $, we also have
%$$C\partial_e u -u \ge 0 \quad \text{ in } B_{1/2}\cap \Omega(u)\cap \Lambda^\psi(u).$$
This completes the proof of the lemma.
\end{proof}

\begin{lem}\label{inc dri'}
Let $u \in P_1(M)$ and $f\ge c >0$ in $B_1$, $\La \psi \ge c>0$ in $\Omega(\psi)$. Let further
$$ \psi_0=\frac{a}{2}(x_1^+)^2$$ 
and
$$u_0(x)=\frac{1}{2} (x_1^+)^2 \quad \text{ or } \quad u_0=\frac{a}{2}(x^+_1)^2.$$
Suppose also $\norm{Df}_{L^{\infty}(\overline{\{\psi>0\}\cap B_1})}, \norm{D^3\psi}_{L^{\infty}(\overline{\{\psi>0\}\cap B_1})}<\dfrac{c\delta}{2}$ for $0<\delta \le 1$ and
\begin{equation}\label{clo}
\norm{u-u_0}_{C^{1}(B_1)}, \quad \norm{\psi-\psi_0}_{C^{1}(B_1)} \le \epsilon.
\end{equation}
Then $\epsilon\le \dfrac{c}{128n}$ implies 
\begin{align*}
u &\ge 0\quad \text{ in } B_{1/2},
\end{align*}
and $\epsilon\le \dfrac{c \delta}{128n}$ implies
\begin{align*}
\partial_e u &\ge0 \quad \text{ in } B_{1/2}, 
\end{align*}
 for any
$$e \in \mathcal C_\delta \cap \partial B_1,$$
where
$$\mathcal C_\delta=\{x\in \re^n : x_1 > \delta |x'|\}, \quad x'=(x_2,...,x_n).$$
\end{lem}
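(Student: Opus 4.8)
The plan is to reduce both assertions to the previous directional-monotonicity lemma, Lemma \ref{inc dri}, applied with a well-chosen direction and constant $C$. First observe that the explicit blowups $u_0$ and $\psi_0$ have a clean monotonicity structure: writing $b\in\{1,a\}$ for the coefficient of $u_0$, one has $u_0(x)=\frac b2(x_1^+)^2$, so for any direction $e=(e_1,e')$ a direct computation gives $\partial_e u_0 = b\,e_1\,x_1^+$ and hence
\begin{equation*}
C\,\partial_e u_0 - u_0 = b\,x_1^+\!\left(C e_1 - \tfrac12 x_1\right),\qquad C\,\partial_e \psi_0 - \psi_0 = a\,x_1^+\!\left(C e_1 - \tfrac12 x_1\right).
\end{equation*}
On $B_1$ we have $x_1^+\le 1$ and $x_1\le 1$, so if $e\in\mathcal C_\delta\cap\partial B_1$ then $e_1>\delta|e'|$ forces $e_1\ge \delta/\sqrt{1+\delta^2}\ge \delta/\sqrt2$; choosing $C=1/\delta$ (say) makes $Ce_1 - \tfrac12 x_1 \ge C e_1 - \tfrac12 \ge \tfrac{1}{\sqrt2}-\tfrac12>0$ on $B_1$, so that $C\partial_e u_0 - u_0\ge 0$ and $C\partial_e\psi_0 - \psi_0\ge 0$ in $B_1$. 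For the first (unsigned) assertion $u\ge 0$, one instead takes $e=e_1$ itself and $C=$ a fixed constant, noting $C\partial_{e_1}u_0 - u_0 = x_1^+(bC - \tfrac b2 x_1)\ge 0$ on $B_1$ once $C\ge 1/2$; this is the same device used to deduce positivity in the one-sided problem.

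Next I would transfer these inequalities from $u_0,\psi_0$ to $u,\psi$ up to an $\epsilon$-error using the closeness hypothesis \eqref{clo}. Since $\|u-u_0\|_{C^1(B_1)}\le\epsilon$ and $\|\psi-\psi_0\|_{C^1(B_1)}\le\epsilon$, we get
\begin{equation*}
C\partial_e u - u \ge C\partial_e u_0 - u_0 - (C+1)\epsilon \ge -(C+1)\epsilon \quad\text{in } B_1,
\end{equation*}
and similarly for $\psi$. With $C$ of size $1/\delta$ (for the monotonicity claim) or $O(1)$ (for the positivity claim), the quantity $(C+1)\epsilon$ is $\lesssim \epsilon/\delta$, respectively $\lesssim\epsilon$. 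The hypotheses $\epsilon\le c\delta/128n$ and $\epsilon\le c/128n$ are exactly what is needed to make this error term smaller than the threshold $\epsilon_0<c/64n$ required to apply Lemma \ref{inc dri}; one also checks that the smallness condition on $\|Df\|$ and $\|D^3\psi\|$ in Lemma \ref{inc dri}, namely being bounded by $c/(2C)$, follows from the assumed bound $< c\delta/2$ together with $C\asymp 1/\delta$. Then Lemma \ref{inc dri} upgrades $C\partial_e u - u\ge -\epsilon_0$ in $B_1$ to $C\partial_e u - u\ge 0$ in $B_{1/2}$, and likewise $C\partial_e\psi-\psi\ge0$ in $B_{3/4}$.

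Finally I would extract the stated conclusions from $C\partial_e u - u\ge 0$ in $B_{1/2}$. For the positivity claim, take $e=e_1$: on $\Omega(u)\cap\{u<\psi\}$ where $u$ can be negative, the inequality $C\partial_{e_1}u\ge u$ together with non-degeneracy / the structure of the contact set forces $u\ge0$; more cleanly, one argues that $C\partial_{e_1}u - u\ge0$ in $B_{1/2}$ propagates the sign of $u$ inward from the region $\{x_1 \le -4\sqrt{n\epsilon/c}\}\cap B_{1/2}$ where Lemma \ref{upos} already gives $u=0$, along the $e_1$-direction, so that $u\ge0$ throughout $B_{1/2}$. For the monotonicity claim, $C\partial_e u\ge u\ge 0$ in $B_{1/2}$ gives immediately $\partial_e u\ge0$ in $B_{1/2}$ for every $e\in\mathcal C_\delta\cap\partial B_1$, as desired.

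\textbf{Main obstacle.} The delicate point is the bookkeeping of constants: one must pick $C=C(\delta)$ so that simultaneously (a) $C\partial_e u_0 - u_0\ge0$ holds on all of $B_1$ for every $e\in\mathcal C_\delta$, (b) the propagated error $(C+1)\epsilon$ stays below the threshold of Lemma \ref{inc dri} under precisely the hypothesis $\epsilon\le c\delta/128n$, and (c) the derivative bounds $\|Df\|,\|D^3\psi\|<c\delta/2$ imply the bound $<c/(2C)$ needed by Lemma \ref{inc dri}. Getting a single choice of $C$ (of order $1/\delta$, with explicit numerical constant) to satisfy all three, and separately handling the positivity case with an $O(1)$ constant, is the part requiring care; everything else is a direct quotation of Lemmas \ref{upos} and \ref{inc dri}.
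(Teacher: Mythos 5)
Your proposal is correct and follows essentially the same route as the paper: the same choice $C=\delta^{-1}$ when invoking Lemma~\ref{inc dri}, the same transfer of $C\partial_e u_0-u_0\ge 0$ to $C\partial_e u-u\ge -2\epsilon\delta^{-1}$ via the closeness hypothesis, and the same use of Lemma~\ref{upos} to locate a slab $\{x_1\le -4\sqrt{n\epsilon/c}\}$ where $u=0$, from which the sign is propagated along directions in the cone. The paper makes your ``propagate the sign inward along the $e$-direction'' step explicit by multiplying $\partial_e u-u\ge 0$ by $\exp(-e\cdot x)$ and integrating, and then deduces $\partial_e u\ge 0$ from $\delta^{-1}\partial_e u\ge u\ge 0$, exactly as you indicate.
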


\begin{proof}
Direct computation shows that
$$\delta^{-1} \partial_e u_0-u_0 \ge 0, \quad \delta^{-1} \partial_e \psi_0-\psi_0 \ge 0 \text{ in } B_1 \quad \text{ for any } e\in \mathcal C_\delta\cap \partial B_1.$$ 
By using the closeness condition \eqref{clo} for $\epsilon\le c \delta/128n$,  we have
$$\delta^{-1}\partial_e u -u\ge-2\epsilon \delta^{-1}\ge -\frac{c}{64n}, \quad  \delta^{-1}\partial_e \psi -\psi \ge -\frac{c}{64n} \quad \text{ in } B_1.$$

By Lemma \ref{inc dri}, we have
\begin{equation}\label{mon di}
\delta^{-1}\partial_e u -u\ge 0 \quad \text{ in } B_{1/2} \quad \text{ for any } e\in \mathcal C_\delta\cap \partial B_1.
\end{equation}
Recalling Lemma \ref{upos}, we have
$$u=0 \quad \text{ in } \left\{x_1\le -\frac{1}{2\sqrt{2}} \right\} \cap B_{1/2}.$$
Let $\delta=1$ and multiply \eqref{mon di} by $exp(-e\cdot x)$. Then we have
$$\partial_e (exp(-e\cdot x)\cdot u)\ge 0 \quad \text{ in } B_{1/2}.$$
By integrating $(exp(-e\cdot x)\cdot u)$ with direction $e\in C_1$, we obtain $u \ge 0$ in $B_{1/2}$. Moreover, we have that 
$\partial_e u \ge0  \text{ in } B_{1/2},$ for any $e \in \mathcal C_\delta \cap \partial B_1.$
\end{proof}
\emph{
The rescaled function $u_r$ at $0$ satisfies 
$$ \La u_r=f(rx) \chi_{\{\psi_r>u_r>0\}}+\La \psi(rx) \chi_{\{\psi_r=u_r>0\}} \quad \text{ in } B_{1/r}.$$
Moreover, when  $r$ tends to $0$, then $u_r$ converges to $u_0$ in $C^{1,\alpha}_{loc}(\re^n)$ and
$$\norm{D (f(rx))}_{L^{\infty}(B_1)}=r\norm{D f(rx)}_{L^{\infty}(B_1)} \le r\norm{D f(x)}_{L^{\infty}(B_1)},$$
$$\norm{D (\La\psi(rx))}_{L^{\infty}(B_1)}=r\norm{D\La \psi(rx)}_{L^{\infty}(B_1)} \le r\norm{D \La \psi(x)}_{L^{\infty}(B_1)}$$
converge to $0$. Therefore, we have the following lemma.
}
\begin{lem}\label{dir mon}(Directional monotonicity)
Let $u \in P_1(M)$ and $f\ge c >0$ in $B_1$, $\La \psi \ge c>0$ in $\Omega(\psi)$. 
Let 
$$ \psi_0=\frac{a}{2}(x_1^+)^2$$ and
$$u_0(x)=\frac{1}{2} (x_1^+)^2 \quad \text{ or } \quad u_0=\frac{a}{2}(x^+_1)^2,$$
 where $u_0$ and $\psi_0$ are blowup functions of $u$ and $\psi$, respectively. Then for any $\delta \in (0,1]$ there exists $r_\delta=r(\delta,u)>0$ such that
\begin{align*}
 u &\ge 0 \quad \text{ in } B_{r_1}\\
\partial_e u &\ge 0 \quad \text{ in } B_{r_\delta}\quad \text{ for any } e\in \mathcal C_\delta.
\end{align*}
\end{lem}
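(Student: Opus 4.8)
The plan is to obtain Lemma~\ref{dir mon} from Lemma~\ref{inc dri'} by the rescaling procedure already outlined in the paragraph preceding the statement. I would begin by recording the structural properties of the rescaled solution $u_r:=u_{r,0}$ for $0<r<1$: it solves
\[
\Delta u_r = f(rx)\,\chi_{\{\psi_r>u_r>0\}} + \Delta\psi(rx)\,\chi_{\{\psi_r=u_r>0\}}\quad\text{in }B_{1/r}\supset B_1,
\]
with upper obstacle $\psi_r(x):=\psi(rx)/r^2$; it satisfies $\|D^2u_r\|_{\infty,B_1}=\|D^2u\|_{\infty,B_r}\le M$ and $0\in\Gamma^d(u_r)$ (free-boundary membership is preserved by this rescaling centered at $0\in\Gamma^d(u)$); and $\psi_r\in C^{1,1}(B_{1/r})\cap C^{2,1}(\overline{\Omega(\psi_r)})$. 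Moreover $f(rx)\ge c$ on $B_1$ and $\Delta\psi_r(x)=\Delta\psi(rx)\ge c$ on $\Omega(\psi_r)$, since $x\in\Omega(\psi_r)$ exactly when $rx\in\Omega(\psi)$. Thus $u_r$, with data $f(r\,\cdot)$ and $\psi_r$, satisfies on $B_1$ every hypothesis imposed in Lemma~\ref{inc dri'} except possibly the smallness and closeness conditions.

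Next, fix $\delta\in(0,1]$ and invoke the two limits recalled in the text: $u_r\to u_0$ and $\psi_r\to\psi_0=\tfrac a2(x_1^+)^2$ in $C^1(\overline{B_1})$ as $r\to0$ (a consequence of the uniform $C^{1,1}$ bound of Theorem~\ref{rem opt}, precompactness of the rescalings, and the hypothesis that every blowup of $u$ at $0$ is $\tfrac12(x_1^+)^2$ or $\tfrac a2(x_1^+)^2$), together with the scaling bounds
\[
\|D(f(r\,\cdot))\|_{L^\infty(B_1)}\le r\|Df\|_{L^\infty(B_1)},\qquad
\|D^3\psi_r\|_{L^\infty(\overline{\Omega(\psi_r)\cap B_1})}\le r\|D^3\psi\|_{L^\infty(\overline{\Omega(\psi)\cap B_1})},
\]
both of which vanish as $r\to0$. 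Hence there is $r^0_\delta>0$ such that for all $r\le r^0_\delta$ one has $\|D(f(r\,\cdot))\|_{L^\infty(B_1)}<c\delta/2$, $\|D^3\psi_r\|_{L^\infty(\overline{\Omega(\psi_r)\cap B_1})}<c\delta/2$, and $\|u_r-u_0\|_{C^1(B_1)},\|\psi_r-\psi_0\|_{C^1(B_1)}\le c\delta/128n$, i.e. all hypotheses of Lemma~\ref{inc dri'} hold for $u_r$ with parameter $\delta$. Applying that lemma to $u_r$ yields $\partial_e u_r\ge0$ in $B_{1/2}$ for every $e\in\mathcal C_\delta\cap\partial B_1$; since $\partial_e u_r(x)=r^{-1}\partial_e u(rx)$, this reads $\partial_e u\ge0$ in $B_{r^0_\delta/2}$, so the second conclusion holds with $r_\delta:=r^0_\delta/2$. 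Running the same argument with $\delta=1$ and the threshold $\epsilon\le c/128n$ gives $u\ge0$ in $B_{r_1}$. Finally, because $\mathcal C_\delta$ is a cone and $e\mapsto\partial_e u$ is positively homogeneous, the assertion for unit $e\in\mathcal C_\delta$ is identical to the stated one for all $e\in\mathcal C_\delta$.

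The only point that I expect to need genuine care is that the convergences $u_r\to u_0$, $\psi_r\to\psi_0$ may be used for all small $r$ simultaneously, which is what permits the single choice of $r^0_\delta$. Under the running hypotheses this holds since the blowup set of $u$ at $0$ is the two-point family $\{\tfrac12(x_1^+)^2,\tfrac a2(x_1^+)^2\}$ with a common axis (pinned down by $0\in\Gamma^d(u)$ and the blowup of $\psi$ being $\tfrac a2(x_1^+)^2$), so the precompact family $\{u_r\}$ has all of its $C^1_{loc}$-limit points $C^1(B_1)$-close to $u_0$. If one prefers to sidestep this, the statement may be proved by contradiction: were the conclusion to fail for some $\delta$, one could pick $r_j\downarrow0$, $e_j\in\mathcal C_\delta\cap\partial B_1$ and $y_j\in B_{r_j/2}$ with $\partial_{e_j}u(y_j)<0$; passing to a subsequence with $u_{r_j}\to\tilde u_0$ and $\psi_{r_j}\to\psi_0$ (the limit $\tilde u_0$ again being one of the two admissible half-space solutions), Lemma~\ref{inc dri'} applied to $u_{r_j}$ for $j$ large forces $\partial_{e_j}u_{r_j}\ge0$ on $B_{1/2}$, contradicting $\partial_{e_j}u_{r_j}(r_j^{-1}y_j)=r_j^{-1}\,\partial_{e_j}u(y_j)<0$ with $r_j^{-1}y_j\in B_{1/2}$ (note $e_j\in\mathcal C_\delta$ for every $j$, so nothing goes wrong if the limit direction lands on $\partial\mathcal C_\delta$). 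Everything else is the routine verification that the hypotheses of Lemma~\ref{inc dri'} are stable under the rescaling $u\mapsto u_r$.
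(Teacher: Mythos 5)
Your proposal is correct and follows exactly the route the paper takes: the remark preceding Lemma~\ref{dir mon} in the paper \emph{is} the intended proof sketch (rescale, note that the Lipschitz norms of $f(r\,\cdot)$ and $\Delta\psi(r\,\cdot)$ tend to $0$, invoke the $C^{1,\alpha}_{loc}$ convergence of $u_r$ to the half-space blowup, and then apply Lemma~\ref{inc dri'}), and your write-up simply fills in the same details. The one place where you add genuine care beyond the paper's sketch is the observation that the convergence $u_r\to u_0$ need only hold subsequentially a priori (since there are two admissible blowup profiles), and your contradiction argument cleanly handles this; note that the direct version also goes through because precompactness of $\{u_r\}$ in $C^1(\overline{B_1})$ together with the fact that \emph{every} limit point is one of the two admissible half-space functions forces $u_r$ into the $\epsilon$-neighborhood of that two-element set for all small $r$, which is all that Lemma~\ref{inc dri'} asks for.
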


\section{Classification of Blowups}

\emph{In this section, we classify the blowups by using the results in Section \ref{sec pro glo}, \ref{sec dir mon}.}

\begin{prop}\label{cla glo}
Let $u \in P_1(M)$ with an upper obstacle $\psi$ such that 
$$0\in \partial \Omega(\psi), \quad \lim_{x \to 0, x\in \Omega(\psi)} \Delta \psi(x)=a>f(0)=1, \quad f\ge c >0 \text{ in } B_1,$$
and 
$$ \inf{\left\{\Delta \psi,  \Delta \psi-f\right\}} \ge c>0 \text { in }\Omega(\psi).$$
Suppose
$$\min{\left\{\delta_r(u), \delta_r(\psi)\right\}}\ge \epsilon_0 \quad \forall r< 1/4.$$
Then
$$\psi_0=\frac{a}{2}(x_1^+)^2 \quad \text{ and } \quad u_0=\frac{1}{2}(x_1^+)^2 \quad \text{ in } \re^n,$$
in an appropriate system of coordinates.
\end{prop}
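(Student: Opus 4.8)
The plan is a two–stage blow‑up. First I identify the blow‑up of the obstacle $\psi$ as a half‑space solution, so that in suitable coordinates the corresponding blow‑up $u_0$ of $u$ is a global solution in $P_\infty(M)$ covered by Section~\ref{sec pro glo}; then I classify the blow‑ups and shrink‑downs of $u_0$ and invoke Weiss' monotonicity formula, together with non‑degeneracy, to pin down $u_0$ itself. Note that $0\in\Gamma^d(u)$ gives $u(0)=|\nabla u(0)|=0$, and $0\in\partial\Omega(\psi)\subset\Lambda(\psi)$ gives $\psi(0)=|\nabla\psi(0)|=0$. \emph{Blow‑up of the obstacle.} By Theorem~\ref{rem opt} the rescalings $\psi_r(x)=\psi(rx)/r^2$ are bounded in $C^{1,1}_{loc}$, so along some $r_j\to0$ they converge to a blow‑up $\psi_0$. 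Using $\Delta\psi=\Delta\psi\,\chi_{\Omega(\psi)}$ a.e.\ (as $D^2\psi=0$ a.e.\ on $\{\nabla\psi=0\}$), that $\Delta\psi(x)\to a$ as $\Omega(\psi)\ni x\to0$, that $\Gamma(\psi)$ is porous hence Lebesgue‑null (non‑degeneracy of $\psi$ from $\Delta\psi\ge c$ in $\Omega(\psi)$, exactly as in Lemmas~\ref{nond} and~\ref{Leb fb}), and the standard limit passage, one gets that $\psi_0$ solves the no‑sign obstacle problem $\Delta\psi_0=a\,\chi_{\Omega(\psi_0)}$ in $\re^n$, with $0\in\Gamma(\psi_0)$ and, by the thickness remark, $\delta_r(\psi_0)\ge\epsilon_0$ for all $r>0$. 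The classification of global no‑sign obstacle solutions (\cite{CKS}; cf.\ Theorem~3.22 of~\cite{PSU}), the thickness ruling out the polynomial case, forces $\psi_0$ to be a half‑space solution; after a rotation, $\psi_0=\tfrac a2(x_1^+)^2$, and we fix these coordinates.

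\emph{The blow‑up $u_0$ is a global solution.} Passing to a further subsequence of the same $r_j\to0$, let $u_{r_j}\to u_0$ in $C^{1,\alpha}_{loc}$. Passing to the limit in
\[
\Delta u_{r_j}=f(r_jx)\,\chi_{\Omega(u_{r_j})\cap\{u_{r_j}<\psi_{r_j}\}}+\Delta\psi(r_jx)\,\chi_{\Omega(u_{r_j})\cap\{u_{r_j}=\psi_{r_j}\}}
\]
— using $f(r_j\cdot)\to f(0)=1$, $\Delta\psi(r_j\cdot)\to a$, $|\Gamma(u)|=0$ (Lemma~\ref{Leb fb}) and the non‑degeneracy of $u$ (Lemma~\ref{nond}, which also yields $0\in\Gamma(u_0)$) — one obtains $u_0\in P_\infty(M)$ with upper obstacle $\psi_0=\tfrac a2(x_1^+)^2$, constant $a>1$, and $\delta_r(u_0)\ge\epsilon_0$ for all $r>0$. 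By Proposition~\ref{pos glo} and Lemma~\ref{2dim}, $u_0\ge0$, $u_0$ is two‑dimensional with $\partial_2u_0\ge0$, and $u_0$ solves~\eqref{2d po}; the same holds for every blow‑up and every shrink‑down of $u_0$, which also lie in $P_\infty(M)$ with the same obstacle and thickness.

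\emph{Classification and conclusion.} By Corollary~\ref{hom} every blow‑up and every shrink‑down of $u_0$ is homogeneous of degree two, hence by Lemma~\ref{cla hom} equals $\tfrac12(x_1^+)^2$ or $\tfrac a2(x_1^+)^2$. Since $u_0\ge0$ has the form in Definition~\ref{def wei}, Weiss' monotonicity formula applies to $u_0$: $r\mapsto W(r,u_0)$ is non‑decreasing, $W(0+,u_0)=W(\,\cdot\,,p)$ for any blow‑up $p$ of $u_0$, $W(\infty,u_0)=W(\,\cdot\,,q)$ for any shrink‑down $q$, and a direct computation gives $W(\,\cdot\,,\tfrac b2(x_1^+)^2)\equiv b^2c_0$ with $c_0=c_0(n)>0$; thus $W(0+,u_0),W(\infty,u_0)\in\{c_0,a^2c_0\}$ with $c_0<a^2c_0$ since $a>1$. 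Monotonicity leaves only: $W(0+,u_0)=W(\infty,u_0)$, or $p=\tfrac12(x_1^+)^2$ and $q=\tfrac a2(x_1^+)^2=\psi_0$ for all $p,q$. The latter is impossible: then $V:=\psi_0-u_0\ge0$ has a shrink‑down $\psi_0-q\equiv0$, while $\Delta V\ge a-1>0$ on $\{V>0\}$ (as $u_0\ge0$ forces $\{V>0\}\subset\{\psi_0>0\}$ and $\Delta u_0\in\{0,1\}$ there) and $\{V>0\}\neq\emptyset$ since $V$ has blow‑up $\psi_0-p=\tfrac{a-1}2(x_1^+)^2\not\equiv0$ at $0$; the non‑degeneracy of $V$ (argued as in Lemma~\ref{nond}) then forces every shrink‑down of $V$ to be nonzero, a contradiction. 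Hence $W(\,\cdot\,,u_0)$ is constant, $u_0$ is homogeneous of degree two, and Lemma~\ref{cla hom} gives $u_0=\tfrac12(x_1^+)^2$ or $u_0=\tfrac a2(x_1^+)^2$. The case $u_0=\tfrac a2(x_1^+)^2=\psi_0$ is excluded: the blow‑up of $u$ being admissible, Lemma~\ref{dir mon} gives $u\ge0$ in some $B_{r_1}$, whence $v:=\psi-u\ge0$ satisfies $\Delta v\ge c>0$ on $\{v>0\}\cap B_{r_1}$ (using $u\ge0$ and $\inf\{\Delta\psi,\Delta\psi-f\}\ge c$ in $\Omega(\psi)$), so $v$ is non‑degenerate near $0$; since $0\in\Gamma^\psi(u)$ means $0\in\overline{\{v>0\}}$, every blow‑up of $v$ at $0$ is nonzero, contradicting $\psi_0-u_0\equiv0$. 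Therefore $u_0=\tfrac12(x_1^+)^2$; performing this for every sequence $r_j\to0$, with the rotation chosen once, yields the stated conclusion.

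\emph{Main difficulty.} No monotone energy is directly available for $u\in P_1(M)$ (because $f$ is non‑constant and $\psi$ is not a pure power), so homogeneity of $u_0$ cannot be extracted in a single blow‑up: it has to be recovered by classifying the blow‑ups and shrink‑downs of $u_0$ and then running Weiss' formula on $u_0$ itself, which is where $a>1$ and the bookkeeping of the Weiss energies $\{c_0,a^2c_0\}$ do the work. The second delicate point is transporting the sharp statement $u_0\neq\psi_0$ back to the original scale from the hypothesis $0\in\Gamma^\psi(u)$, via the non‑degeneracy of $\psi-u$. The limit passage of the second step — identifying the characteristic functions in the equation — is routine but does genuinely rely on non‑degeneracy and $|\Gamma(u)|=0$.
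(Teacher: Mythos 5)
Your proposal is correct, and its overall architecture (blowing up $\psi$ to a half‑space via \cite{CKS}, reducing $u_0$ to a nonnegative two‑dimensional global solution via Proposition~\ref{pos glo}, classifying blowups and shrink‑downs of $u_0$ via Corollary~\ref{hom} and Lemma~\ref{cla hom}, and closing with Weiss' sandwich) mirrors the paper. Where you genuinely diverge is in how the unwanted cases are eliminated. The paper works at three scales: it applies the directional monotonicity lemmas~\ref{upos}--\ref{inc dri'} first to the pair $(u_{00},u_0)$ and then to $(u_0,u)$ to obtain $u\ge0$ near the origin, invokes Remark~\ref{nonde for v} (non‑degeneracy of $v=\psi-u$) to place $0$ on $\Gamma^{\psi_0}$ of $u_0$, $u_{00}$ and $u_{0\infty}$, and only \emph{then} deduces $u_{00}=u_{0\infty}=\tfrac12(x_1^+)^2$ and runs the Weiss sandwich. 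You instead observe that the Weiss values of the admissible homogeneous profiles are exactly $\{c_0,a^2c_0\}$, that monotonicity leaves only the constant case or the strict jump $c_0\to a^2c_0$, and you kill the jump case directly at the global scale by applying non‑degeneracy to $V=\psi_0-u_0$ (whose shrink‑down would have to vanish identically while $\Delta V\ge a-1>0$ on the nonempty set $\{V>0\}$); you then use the already packaged Lemma~\ref{dir mon} once, and non‑degeneracy of $\psi-u$, to rule out $u_0=\psi_0$. Your elimination of the jump case is a clean self‑contained argument that avoids the paper's two‑level transfer of directional monotonicity, and it makes the dependence on the hypothesis $0\in\Gamma^\psi(u)$ appear only in the final step; the paper's version, conversely, produces the sign property $u\ge0$ near $0$ earlier, which it then reuses for free. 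Both are correct, and the logical content (Weiss sandwich plus non‑degeneracy of the difference with the obstacle) is the same; your ordering is arguably a touch more economical.
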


\begin{proof}
Let $u_0$, $\psi_0$ be a global solution of $u$, $\psi$, respectively. Then $\psi_0$ is a global solution of 
$$\Delta\psi_0 =a \chi_{\Omega(\psi_0)} \quad \text{ in } \re^n,$$
with the thickness assumption,
$$\delta_r(\psi_0)> \epsilon_0, \quad \forall r>0.$$
By the non-degeneracy for $\psi$ (the proof is almost the same as that of Lemma \ref{nond}), we know $0\in \Gamma(\psi_0)$; see also Proposition 3.17 (iv) in \cite{PSU}. By Theorem \RN{2} of \cite{CKS}, we obtain that $\psi_0$ is a half-space solution, i.e.,
$$\psi_0=\frac{a}{2}(x_1^+)^2 \quad \text{ in } \re^n,$$
in an appropriate system of coordinates.
By Proposition \ref{pos glo}, $u_0$ is two-dimensional,  $u_0(x) =u_0(x_1,x_2)$, and hence  a solution of 
$$\Delta u_0=\chi_{\{0<u_0< \psi_0\}}+a\chi_{\{0<u_0=\psi_0 \}}, \quad 0\le u_0\le \psi_0 \quad \text{ a.e. in } \re^2.$$ 
Let $u_{00}=(u_0)_0$ and $u_{0\infty}=(u_0)_\infty$ be blowup and respectively shrink-down of $u_0$ at $0$. By Corollary \ref{hom}, $u_{00}, u_{0\infty}$ are homogeneous of degree two and by Lemma \ref{cla hom},
$$u_{00}=\frac{1}{2}(x_1^+)^2 \quad \text{ or }\quad \frac{a}{2}(x_1^+)^2\quad\text{and} \quad  u_{0\infty}=\frac{1}{2}(x_1^+)^2 \quad \text{ or } \quad  \frac{a}{2}(x_1^+)^2.$$
By Lemma \ref{upos}, \ref{inc dri} and \ref{inc dri'} for $u_{00}$ and $u_0$ and the fact that $(u_0)_r$ converges to $u_{00}$ as $r\to 0$ in $C^{1,\alpha}_{loc}(\re^n)$, we know there are $r',\epsilon'>0$ such that 
\begin{align} 
\delta^{-1}\partial_eu_0 -u_0 \ge 0 \quad &\text{ in } B_{r'} \quad \text{ for any } e\in \mathcal C_\delta \cap \partial B_1. \label{con u_01} \\
u_0=0 \quad & \text{ in } \{x_1 < -\epsilon'\} \cap B_{r'}. \label{con u_02}
\end{align}
Moreover, by Lemma \ref{upos}, \ref{inc dri} and \ref{inc dri'} for $u_0$ and $u$ in $B_{r'}$ with the conditions, \eqref{con u_01} and \eqref{con u_02}, we know that there is $r''$ such that
$$u \ge 0 \quad \text{ in } B_{r''}.$$
Then we know that $0\in \Gamma^{\psi_0}(u_0)$ and $0\in \Gamma^{\psi_0}(u_{00}), \Gamma^{\psi_0}(u_{0\infty})$ (see Remark \ref{nonde for v}).
Thus we obtain 
$$u_{00}=u_{0\infty}=\frac{1}{2}(x_1^+)^2.$$
Since 
\begin{align*}
W(1,u_{00})=\lim_{r \to 0} W(1,(u_{0})_r)=&\lim_{r \to 0} W(r,u_{0})\\
\le &\lim_{r \to \infty } W(r,u_0)=\lim_{r \to \infty} W(1,(u_{0})_r)=W(1,u_{0\infty})
\end{align*}
and 
$W(1, u_{00})=W(1, u_{0\infty}),$
we know that $W(r, u_0)$ is constant for $r>0$.
Hence, by Lemma \ref{cla hom} and $0\in \Gamma^\psi(u_0)$, we know that $u_0$ is homogeneous of degree two and
$$u_0(x)=\frac{1}{2}(x_1^+)^2.$$
\end{proof}

\section{Proof of Theorem \ref{reg lo1}}\label{sec reg loc}

\emph{
Let $u$ be as in Proposition \ref{cla glo}. Then a blowup function $u_0$ of $u$ at $0$ is a half-space solution, i.e.,
$$u_0=\frac{1}{2}(x_1^+)^2,$$
in an appropriate system of coordinates. By the directional monotonicity for $u$ (Lemma \ref{dir mon}), we have the uniqueness of blowup (see Proposition 4.6 of \cite{PSU}).
}
\begin{prop}[Uniqueness of blowup]
Let $u$ be as in Proposition \ref{cla glo}. Then the blowups of $u$ at $0$ is unique, i.e., in an appropriate system of coordinates,
for any sequence $\lambda_i \to 0$, 
$$u_{\lambda_i} \to u_0=\frac{1}{2}(x_1^+)^2 \quad  \text{ in } C^{1,\alpha}_{loc}(\re^n)$$ 
as $\lambda_i \to 0$.
\end{prop}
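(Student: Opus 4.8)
The plan is to follow the classical scheme (as in Proposition~4.6 of \cite{PSU}): once one blowup is available, directional monotonicity forces \emph{all} blowups to share the same axis, hence to coincide. First I would fix, once and for all, the system of coordinates supplied by Proposition~\ref{cla glo}, so that along some sequence $\lambda_i\to 0$ one has $\psi_0=\frac a2(x_1^+)^2$ and $u_0=\frac12(x_1^+)^2$, and so that Lemma~\ref{dir mon} holds with \emph{this} choice of $e_1$: for every $\delta\in(0,1]$ there is $r_\delta>0$ with
\begin{equation*}
\partial_e u\ge 0 \quad\text{in } B_{r_\delta}\quad\text{for all } e\in\mathcal C_\delta\cap\partial B_1 .
\end{equation*}
It then suffices to show that every subsequential blowup of $u$ at $0$ equals $\frac12(x_1^+)^2$ in this fixed frame; the full convergence follows from a standard subsequence argument together with the uniform $C^{1,1}$ bound on the rescalings $u_\lambda$ (Theorem~\ref{rem opt}), which makes the family precompact in $C^{1,\alpha}_{loc}(\re^n)$.

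Next I would take an arbitrary $\mu_i\to 0$, pass (by the uniform $C^{1,1}$ bound) to a subsequence, not relabelled, with $u_{\mu_i}\to v$ in $C^{1,\alpha}_{loc}(\re^n)$, and transfer the monotonicity. Since $0\in\Gamma^d(u)\subset\partial\Lambda(u)$ and $u\in C^1$, we have $u(0)=0$ and $\nabla u(0)=0$, so $\partial_e u_{\mu_i}(x)=\mu_i^{-1}(\partial_e u)(\mu_i x)$; hence $\partial_e u_{\mu_i}\ge 0$ on $B_{r_\delta/\mu_i}$. Fixing any $R>0$ and letting $i\to\infty$ (so that $r_\delta/\mu_i>R$ eventually), stability of the differential inequality under $C^1$ convergence gives
\begin{equation*}
\partial_e v\ge 0 \quad\text{in } \re^n\quad\text{for all } e\in\mathcal C_\delta\cap\partial B_1 \text{ and all }\delta\in(0,1].
\end{equation*}

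Then I would invoke the classification: $v$ is itself a blowup of $u$ at $0$, so Proposition~\ref{cla glo} applies to it and $v=\frac12((x\cdot\nu)^+)^2$ for some unit vector $\nu$ (with $\nu$ a priori depending on the subsequence). Computing $\partial_e v(x)=(e\cdot\nu)\,(x\cdot\nu)^+$ and using that $\{x\cdot\nu>0\}\neq\emptyset$, the inequality above forces $e\cdot\nu\ge 0$ for every $e$ in $\bigcup_{\delta\in(0,1]}(\mathcal C_\delta\cap\partial B_1)=\{e\in\partial B_1:\ e\cdot e_1>0\}$. Taking closures, the closed half-space $\{e\cdot e_1\ge 0\}$ sits inside the closed half-space $\{e\cdot\nu\ge 0\}$, which is possible only if $\nu=e_1$. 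Hence $v=\frac12(x_1^+)^2=u_0$. Since the subsequence was arbitrary, every blowup of $u$ at $0$ equals $\frac12(x_1^+)^2$ in the fixed frame, and the standard subsequence argument upgrades this to convergence of the whole family $u_{\lambda_i}$ as $\lambda_i\to 0$.

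The only genuinely substantive step is the combination of the last two paragraphs: transferring directional monotonicity to the blowup and observing that monotonicity along a full open cone of directions around $e_1$ pins the axis $\nu$ of the already-classified blowup down to $e_1$. Everything else — compactness of the rescalings, passage to the limit in the inequalities, and the elementary half-space inclusion — is routine. The one bookkeeping point to be careful about is that Lemma~\ref{dir mon} must be read with respect to the coordinate frame fixed at the outset, so that all competing blowups are compared against the \emph{same} $e_1$.
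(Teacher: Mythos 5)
Your argument is correct and is exactly the mechanism the paper invokes by citing Proposition 4.6 of \cite{PSU}: directional monotonicity from Lemma \ref{dir mon} in a \emph{fixed} frame is transferred to any subsequential blowup $v$ via the scaling identity $\partial_e u_{\mu_i}(x)=\mu_i^{-1}(\partial_e u)(\mu_i x)$, and since $v$ must itself be a half-space profile by Proposition \ref{cla glo}, the inequality $\partial_e v\ge 0$ on the full open cone $\{e\cdot e_1>0\}$ pins the axis to $e_1$, so all blowups coincide. This matches the paper's proof (which is stated only as a reference to PSU) in substance; the one hygiene point you already flag — reading Lemma \ref{dir mon} in the frame fixed at the outset — is the crux, and you handle it correctly.
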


\begin{lem}\label{near zero}
Let $u$ be as in Proposition \ref{cla glo}. Then there is $r'_1=r'_1(u,\psi)>0$ such that the blowup function of $u$ at $x\in \Gamma(u) \cap B_{r'_1}$ are half-space functions.
\end{lem}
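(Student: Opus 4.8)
The plan is to propagate the known blowup classification at the origin to nearby free boundary points by a compactness/continuity argument, exactly in the spirit of Proposition 4.7 of \cite{PSU}. First I would record that, by Proposition \ref{cla glo} together with the uniqueness of blowup just proven, the blowup of $u$ at $0$ is the half-space solution $\frac12(x_1^+)^2$; moreover, by Lemma \ref{dir mon}, for every $\delta\in(0,1]$ there is $r_\delta>0$ with $\partial_e u\ge0$ in $B_{r_\delta}$ for all $e\in\mathcal C_\delta$, and $u\ge0$ in $B_{r_1}$. The key point is that these directional monotonicity cones are \emph{stable} under small translations of the center: if $x\in\Gamma(u)$ is close enough to $0$, then $B_{r_\delta/2}(x)\subset B_{r_\delta}$, so $\partial_e u\ge0$ in $B_{r_\delta/2}(x)$ for all $e\in\mathcal C_\delta$ as well, and likewise $u\ge0$ near $x$.

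Next I would argue by contradiction: suppose there is a sequence $x^j\in\Gamma(u)\cap B_{1/j}$ (so $x^j\to0$, but one only needs $x^j$ in a small fixed ball) at which some blowup $u^j_0$ of $u$ at $x^j$ is \emph{not} a half-space function. Consider the rescalings $v^j(y):=u_{\rho_j,x^j}(y)$ for suitable $\rho_j\to0$; by the uniform $C^{1,1}$ bound (Theorem \ref{rem opt}) these converge, along a subsequence, in $C^{1,\alpha}_{loc}$ to a limit $v$. From the inclusion of monotonicity cones above, $v$ inherits $\partial_e v\ge0$ for all $e\in\mathcal C_\delta$ and every $\delta>0$, hence $\partial_e v\ge0$ for all $e$ in the open half-space $\{e_1>0\}$, and $v\ge0$; also $0\in\Gamma(v)$ by the non-degeneracy (Lemma \ref{nond}), and $v\in P_\infty(M)$ with upper obstacle $\frac a2(x_1^+)^2$ because $x^j\to0\in\partial\Omega(\psi)$ and $\lim_{x\to0,x\in\Omega(\psi)}\Delta\psi(x)=a$ forces the rescaled obstacles to converge to $\frac a2(x_1^+)^2$. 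Then I would run the same machinery used in Proposition \ref{cla glo} on $v$: Proposition \ref{pos glo} and Lemma \ref{2dim} give two-dimensionality, Corollary \ref{hom} and Lemma \ref{cla hom} classify the blowup and shrink-down of $v$, and the directional-monotonicity argument (Lemmas \ref{upos}, \ref{inc dri}, \ref{inc dri'}) together with $0\in\Gamma^{\psi_0}(v_0)$ via Remark \ref{nonde for v} pins down $v=\frac12(x_1^+)^2$ through the Weiss monotonicity formula. This contradicts the assumption that $v$ (equivalently, a blowup of $u$ at $x^j$) is not a half-space function.

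The main obstacle I anticipate is the transfer of the monotonicity cone from $0$ to $x^j$ with the \emph{same opening}: the radius $r_\delta$ from Lemma \ref{dir mon} degenerates as $\delta\to0$, so for the limiting function $v$ to acquire the full half-space of monotone directions one must choose $\rho_j$ shrinking fast enough relative to $|x^j|$, i.e., let $\delta=\delta_j\to0$ slowly while ensuring $\rho_j\ll r_{\delta_j}$ and $|x^j|\ll r_{\delta_j}$ simultaneously. A secondary technical point is checking that the rescaled obstacles $\psi_{\rho_j,x^j}$ really converge to $\frac a2(x_1^+)^2$ and not to a shifted or vanishing obstacle; this uses $x^j\in\overline{\Omega(\psi)}$ (or handling separately the case $x^j\notin\overline{\Omega(\psi)}$, where the obstacle plays no role near $x^j$ and the classical no-sign obstacle classification of \cite{CKS} applies directly, again yielding a half-space solution under the thickness assumption). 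Once these two bookkeeping issues are set up, the argument is a routine replay of Sections \ref{sec pro glo}--\ref{sec dir mon} applied at the moving center, and the uniform radius $r'_1$ comes out of the contradiction setup.
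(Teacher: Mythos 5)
Your proposal misses the key mechanism the paper uses, and the contradiction structure as you set it up does not close. The paper's proof is a \emph{direct} argument, not a compactness/contradiction one: from the directional monotonicity of $u$ and (via Lemma \ref{inc dri}) of $\psi$ near the origin, it first concludes that in a small ball $B_{r'_1}$ the solution is nonnegative and the free boundaries $\partial\{u=0\}$ and $\partial\{\psi=0\}$ are Lipschitz graphs. Lipschitz regularity of these two sets is exactly what produces the thickness condition $\min\{\delta_r(u,x^0),\delta_r(\psi,x^0)\}\ge\epsilon_0$ for each $x^0\in\Gamma(u)\cap B_{r'_1}$, at all small scales. With that thickness in hand the paper then splits into two cases depending on whether $\{u=\psi\}$ accumulates at $x^0$: if yes, $x^0$ is a common free boundary point and Proposition \ref{cla glo} applies verbatim at $x^0$; if no, near $x^0$ the problem reduces to a pure (one-sided) obstacle problem and Theorem II of \cite{CKS} gives the half-space blowup. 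Your sketch never establishes the thickness condition at the moving centers $x^j$, yet Proposition \ref{pos glo}, Lemma \ref{2dim}, and indeed the whole classification machinery you invoke on the limit $v$ require exactly this hypothesis; without it the blowup could be a polynomial.

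There is also a structural flaw in your contradiction. You set up a diagonal limit $v=\lim u_{\rho_j,x^j}$ with both the center and the scale moving, prove $v$ is a half-space, and then declare this ``contradicts the assumption that $v$ (equivalently, a blowup of $u$ at $x^j$) is not a half-space function.'' But $v$ is not a blowup of $u$ at any single $x^j$: a blowup at $x^j$ is a limit as $\rho\to 0$ with $x^j$ fixed. Even if $\rho_j$ is chosen so that $u_{\rho_j,x^j}$ is within $1/j$ of the blowup $u^j_0$, the conclusion is only that $v=\lim u^j_0$, and a sequence of non-half-space homogeneous solutions can perfectly well converge to a half-space solution, so no contradiction is obtained. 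The clean fix is to drop the contradiction entirely: fix $x^0\in\Gamma(u)\cap B_{r'_1}$, derive the thickness condition at $x^0$ from the Lipschitz graphs as above, and apply the classification directly at $x^0$ in each of the two cases. That is precisely what the paper does.
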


\begin{proof}
By Proposition \ref{cla glo}, we have the directional monotonicity for $u$ (see Lemma \ref{dir mon}).
 Moreover, by Lemma \ref{inc dri} we also have the directional monotonicity for $\psi$. 
Thus, for any $\delta \in (0,1]$, there exists $r_1\ge r'_\delta=r'_\delta(u,\psi)>0$ such that
\begin{align*}
 \psi,u &\ge 0 \quad \text{ in } B_{r'_1}\\
\partial_e \psi, \partial_e u &\ge 0 \quad \text{ in } B_{r'_\delta}\quad \text{ for any } e\in \mathcal C_\delta.
\end{align*}
Hence, by the sign condition $u \ge 0 $ in $B_{r'_1}$, we know that  $u$ is a solution of 
$$\Delta u =f\chi_{\{ 0< u < \psi \}} +\Delta \psi \chi_{\{0< u = \psi\}}, \quad 0\le u \le \psi \quad \text{ in } B_{r'_1}$$
and the free boundaries $\partial \{u=0\}\cap B_{r'_1}=\Gamma(u) \cap B_{r'_1}$ and $\partial \{\psi=0\}\cap B_{r'_1}$ are represented by Lipschitz functions; for details, see Proposition 4.8 of \cite{PSU}.

Case 1) Let $x^0\in\Gamma(u) \cap  B_{r'_1}=\partial \{u=0\} \cap B_{r'_1}$ and assume that there exists $r_0>0$ such that 
$$\{u=\psi\}\cap B_r(x^0)\neq \emptyset \quad \forall r< r_0.$$
Then we can find a sequence of points $x^j\in \{u=\psi\}$ converging to $x^0$ as $j \to \infty.$ Then we have
$$\psi(x^j)=u(x^j)\to 0 \quad \text{ as } j \to \infty,$$
i.e., $x^0\in \{\psi=0\}$. By the sign condition $0\le u \le \psi$ in $B_{r'}$, we know $\{\psi=0\}\subset \{u=0\}$ in $B_{r'_1}$ and therefore $x^0\in\partial \{u=0\} \cap  B_{r'_1} $ implies $x^0\in \partial \{\psi=0\}.$ On the other hand, Lipschitz regularity of $\partial \{u=0\}$ and $\partial \{\psi=0\}$ implies the thickness condition for $\psi$ and $u$, i.e., for some $\epsilon_0, \tilde r=\tilde r(x^0)>0$,
$$\min{\left\{\delta_r(u), \delta_r(\psi) \right\}} \ge \epsilon_0>0 \quad \forall \tilde r\ge r>0.$$ 
Then, by Proposition \ref{cla glo}, we know that the blowup function of $u$ at $x^0$ is a half-space solution (we may assume $\lim_{x \to x^0, x\in \Omega(\psi)} \Delta \psi(x)>f(x^0)$, by the conditions  $\psi \in C^{1,1}(B_1) \cap  C^{2,1}(\overline{\Omega(\psi)})$, $f\in C^{0,1}(B_1)$ and $\lim_{x \to 0, x\in \Omega(\psi)} \Delta \psi(x)=a>f(0)=1$). 

Case 2) Let $x^0\in \Gamma(u)$ and assume that there exists $r_0>0$ such that 
$$\{u=\psi\}\cap B_{r_0}(x^0)= \emptyset.$$
 Then $u$ is a solution of an obstacle problem
$$\Delta u = f\chi_{\{u>0\}}, \quad u\ge0 \quad \text{ in } B_{r_0}(x^0).$$
By Theorem \RN{2} of \cite{CKS} and the thickness condition for $u$ at $x^0$, we know that that the blowup function of $u$ at $0$ is a half-space solution.
\end{proof}

\begin{proof}[Proof of Theorem \ref{reg lo1}]
By Proposition \ref{cla glo}, we have the directional monotonicity for $u$ (see Lemma \ref{dir mon}). Thus, we know that the free boundary $\Gamma(u)\cap B_{r\delta/2}$ is represented as a graph $x_n=f(x')$ with Lipschitz constant of $f$ not exceeding $\delta$. Since $\delta>0$ is arbitrary, we have a tangent plane of $\Gamma(u)$ and the normal vector $e_n$  at $0$.  By Lemma \ref{near zero}, we know that every point $z\in \Gamma(u) \cap B_{r'_1}$ has a tangent plane. Moreover again, by using the directional monotonicity, we obtain that $\Gamma(u) \cap B_{r'_1}$ is $C^1$ (see Theorem 4.10 of \cite{PSU}).

We know that there is a ball $B_{r'_1}$ such that $u \ge0$ in $B_{r'_1}$ and $v=\psi-u$ is a solution of 
$$\Delta v =(\Delta \psi-f)\chi_{\{ 0< v < \psi \}} +\Delta \psi \chi_{\{0< v = \psi\}}, \quad 0\le v \le \psi \quad \text{ in } B_{r'_1}$$
and the blowup function $v_0$ of $v$ at $0$ is a halfspace solution. Thus we have the directional monotonicity for $v$ and $C^1$ regularity of the free boundary $\Gamma(v)=\Gamma^\psi(u)$ near $0$ by using the same method as that in the above paragraph.
\end{proof}

\end{document}